\numberwithin{equation}{section}  
\newcommand{\mycomment}[1]{}
\theoremstyle{plain}
\newtheorem{theorem}{Theorem}
\newtheorem{corollary}[theorem]{Corollary}
\newtheorem*{corollary*}{Corollary}
\newtheorem{lemma}[theorem]{Lemma}
\newtheorem{proposition}[theorem]{Proposition}
\newtheorem{remark}[theorem]{Remark}
\newtheorem{definition}[theorem]{Definition}
\definecolor{bordeaux}{HTML}{cc0000}
\definecolor{viola}{rgb}{0.6, 0.4, 0.8}
\newcommand{\be}{\begin{equation}}
	\newcommand{\ee}{\end{equation}}
\newcommand{\er}{Erdős–Rényi~}
\newcommand{\whp}{{w.h.p.~}}
\newcommand{\iid}{{i.i.d.~}}
\newcommand{\rhs}{{r.h.s.~}}
\newcommand{\lhs}{{l.h.s.~}}
\newcommand{\N}{\mathbb{N}}
\newcommand{\R}{\mathbb{R}}
\newcommand{\eps}{\varepsilon}
\DeclareMathOperator{\cD}{\mathcal{D}}
\DeclareMathOperator{\cE}{\mathcal{E}}
\newcommand{\E}{\mathbb{E}}
\newcommand{\p}{\mathbb{P}}
\newcommand{\PP}{\mathbb{P}}
\newcommand{\one}{\textup{\textbf{1}}}
\newcommand{\var}{\mathbb{V}\hspace{-1pt}{\rm ar}}
\newcommand{\conv}{\xrightarrow[n \to +\infty]{}}
\newcommand{\pconv}{\xrightarrow[n \to +\infty]{\quad \p \quad}}
\newcommand{\op}{o_{\p}}
\newcommand{\bin}{\text{Bin}}
\newcommand{\pois}{\text{Pois}}
\newcommand{\tv}[1]{\left\|#1\right\|_{\rm TV}}
\newcommand{\tent}{t_{\textup{ent}}}
\newcommand{\quenchG}{\mathbf{P}^G}
\newcommand{\quench}{\mathbf{P}}
\newcommand{\pan}{\p^{\textup{an}}}
\newcommand{\Equench}{\mathbf{E}}
\newcommand{\semi}{{Q}}
\newcommand{\veps}{V^{\star}_\eps}
\newcommand{\entropy}{\textup{H}}
\newcommand{\mass}{\textup{\textbf{m}}}
\newcommand{\pat}{\mathfrak{p}}
\newcommand{\trex}{\textit{Tx}}
\newcommand{\h}{{h_\eps}}
\newcommand{\s}{{s_\eps}}
\newcommand{\gates}{\mathcal{G}}
\newcommand{\tauj}{\tau_{\textup{jump}}}
\newcommand{\mub}{\mu_{\gates_i}}
\newcommand{\muout}{\mu^{\rm out}_{\gates_i}}
\newcommand{\muinb}{\mu^{\rm in}_{\gates_i}}
\newcommand{\muqs}{\mu^\star_i}
\pgfplotsset{compat=1.18} 
\begin{document}
\title[Mixing trichotomy for RW on directed SBM]{Mixing trichotomy for random walks\\on directed stochastic block models}

\author{Alessandra Bianchi}
\address{Dipartimento di Matematica "Tullio Levi-Civita",
Universit\`a di Padova, Via Trieste 63, 35121 Padova, Italy.}\email{alessandra.bianchi@unipd.it}
\author{Giacomo Passuello}
\address{Dipartimento di Matematica "Tullio Levi-Civita",
	Universit\`a di Padova, Via Trieste 63, 35121 Padova, Italy.}
\email{giacomo.passuello@phd.unipd.it}
\author{Matteo Quattropani}
\address{Dipartimento di Matematica e Fisica,
	Universit\`a degli Studi Roma Tre, Via della Vasca Navale 84, 00146 Roma, Italy.}
\email{matteo.quattropani@uniroma3.it}

\date{\today}
\numberwithin{equation}{section}
	\begin{abstract}
		We consider a directed version of the classical Stochastic Block Model with $m\ge 2$ communities and a parameter $\alpha$ controlling the inter-community connectivity. We show that, depending on the scaling of $\alpha$, the mixing time of the random walk on this graph can exhibit three different behaviors, which we refer to as subcritical, critical and supercritical. In the subcritical regime, the total variation distance to equilibrium decays abruptly, providing the occurrence of the so-called cutoff phenomenon. In the supercritical regime, the mixing is governed by the inter-community jumps, and the random walk exhibits a metastable behavior: at first it collapses to a local equilibrium, then, on a larger timescale, it
can be effectively described as a mean-field process on the $m$ communities, with a decay to equilibrium which is asymptotically smooth and exponential. Finally, for the critical regime, we show a sort of interpolation of the two above-mentioned behaviors. 
Although the metastable behavior shown in the supercritical regime appears natural from a heuristic standpoint,
a substantial part of our analysis can be read as a control on the homogenization of the underlying random environment.

   \par\bigskip\noindent
   {\it MSC 2020:} primary:  60J10, 05C80, 05C81.
   \par\smallskip\noindent
   {\it Keywords:}  random  digraphs, mixing time, mixing trichotomy, cutoff phenomenon, metastability.\\

    \par\smallskip\noindent
    {\it Acknowledgments.} All the authors are member of GNAMPA-INdAM, and acknowledge financial support through the GNAMPA projects ``Redistribution models on networks'',  
     ``Ferromagnetism versus synchronization: how does disorder destroy universality?'', and ``Moment problems techniques for particle systems and packing graphs''.
     The work of A.~Bianchi is partially funded by the University of Padova through the BIRD project 198239/19  ``Stochastic processes and applications to disordered systems''.
	\end{abstract}

\maketitle

\section{Introduction}\label{sec:intro}

\subsection{Mixing trichotomy: a phase transition in the mixing behavior}
Phase transitions are among the most interesting and well-studied phenomena in statistical physics. In the broadest sense, a \emph{phase transition} is an abrupt change in behavior of a system as a consequence of a slight variation in its parameters. In the mathematical literature, there are plenty of simple probabilistic models that present such a behavior, so it would be worthless to even try to do a small account here. Let us rather focus on a single, striking example of a phase transition phenomenon arising in the study of (large) finite Markov chains: the \emph{cutoff phenomenon}. In a few words, a Markov chain is said to \emph{exhibit cutoff} if the decay of the distance to equilibrium of the chain---measured, e.g., in total variation---takes place in an abrupt manner: there exists some $t$ (which depends on the size of the system) such that, for any $\varepsilon>0$, at time $(1-\varepsilon)t$ its distribution is arbitrarily far from the equilibrium one, while at time $(1+\varepsilon)t$ it is arbitrarily near. In this sense, the ``parameter'' inducing the phase transition here is \emph{time}, and for this reason it is often said that this is an example of a \emph{dynamical} phase transition. 

Despite an increasing amount of work on the subject, the cutoff phenomenon is still far from being completely understood, and the research of simple conditions (i.e., easy-to-check and model independent) guaranteeing the presence of a cutoff is still very active.
In such a landscape, a quite natural line of investigations lies in considering \emph{parametric perturbations} of Markov chains that exhibit a cutoff. More explicitly, focusing the analysis on a particular choice of chain and a method of implementing the perturbation, it is interesting to quantify the \emph{robustness} of the cutoff phenomenon with respect to the \emph{strength} of the perturbation. 

In this setup, the mixing time of the dynamics may exhibit an additional phase transition, governed by the strength of the perturbation and referred to as a \emph{mixing trichotomy}. 
This involves the identification of a \emph{subcritical} regime, 
where the perturbation is sufficiently weak to preserve the mixing behavior of the original chain, contrasted with a \emph{supercritical} regime, where the ``cutoff picture'' is disrupted, leading to \emph{smooth} rather than \emph{abrupt} mixing.
As in the broader framework of statistical physics, the description of such a phase transition is typically accompanied by the identification of a \emph{critical} regime, at the interphase between the other two, where the system exhibits some sort of intermediate behavior.

Among the models for which such a phenomenon has been rigorously proved, we recall: Glauber dynamics for the Ising model \cite{LSg1,LSg2}, random walks on dynamic graphs \cite{AGHH19,AGHH22,CQ21b}, random walks with reset (and the so-called PageRank dynamics) \cite{CQ21a,VS}, Ehrenfest urns with multi-type particles \cite{Q24}, mass redistribution models \cite{CQS24}. 
In particular, although the models in \cite{AGHH19,CQ21a,CQ21b} are very different from each other, they all share a common feature: in the supercritical regime, the total variation distance over time---properly rescaled---converges to an exponential function. Moreover, in all these examples, such an exponential decay of the distance to equilibrium can be read from the point of view of the trajectory of the process: the arrival at equilibrium is due to the occurrence of a certain event, and the time of the first occurrence of the latter event is (asymptotically) exponentially distributed. From a high-level perspective, this is the cartoon underlying another classical phenomenon in statistical physics, known as \emph{metastability}: the system is trapped in a \emph{local equilibrium} up to some (large) exponential time in which the global equilibrium is eventually reached. In other words, looking through the lens of the \emph{mixing trichotomy approach}, one might be tempted to see metastability and cutoff as ``opposed'' phenomena.\\
To make this idea more convincing, it is worth recalling that the emergence of a cutoff
is often explained in terms of \emph{concentration phenomenon}. 
Conversely, the memorylessness of the exponential distribution, which characterizes metastable behaviors, 
can be seen as the complete opposite of \emph{concentration}. 
The goal of this paper is to make the heuristic picture discussed so far as explicit as possible by means of a simple (but natural) model. 

\subsection{Mixing time of random walks on graphs with a community structure}

In recent years, random walks on random graphs have been extensively
studied on various random graph models, becoming a prototypical example
of Markov chains that exhibit the cutoff phenomenon. 
Notable contributions include the establishment of the cutoff for 
 random walks on regular random graphs \cite{LSrrg}, on the giant component of the Erd\"os-Rényi graph \cite{BLPS}, 
on the configuration model \cite{BLPS,BHS,BHLP}, 
on all Ramanujan graphs \cite{LP16}, and on random lifts \cite{BL22,CK22}.  

A significant part of these investigations has also focused
on the directed setting, which is particularly challenging due to
the non-reversibility of the dynamics and the poor knowledge
of the stationary distribution.
This framework, which is closely related to our work, 
was first explored in \cite{BCS1,BCS2}, 
where the cutoff was established for random walks on the directed configuration model and for a broad class of sparse Markov chains, 
and later extended to the case of heavy-tailed degrees \cite{CCPQ}, to PageRank dynamics \cite{CQ21a}, and to the directed Chung-Lu model \cite{BP}. Other results in the same spirit have also been obtained in \cite{BL22,Dub1,Dub2}.

A common thread in all these works is the characterization of the mixing time of the random walk on the graph
in terms of the entropy production rate (or simply, the \emph{entropy}) of the random walk on its \emph{local weak limit}.
The cutoff time is then shown to correspond
to the entropic time $\tent$ (see Eq.~\eqref{eq:tent} for a formal definition),
 which corresponds to the logarithm of the size of the graph divided by the entropy.

The first extension of this approach to random graphs with many communities is due to Ben-Hamou \cite{Ben-Hamou}, who studied the \emph{non-backtracking} random walk on a variant of the configuration model incorporating a 2-community structure. 
In \cite{HSS}, the authors extended the analysis to the simple random walk and allowed multiple communities.
These results reveal
that the community structure can create a configurational bottleneck
in the set of random walk trajectories, depending on the strength
of interactions between communities, and disrupt the entropic picture. 
In fact, the mixing behavior displays a phase transition
among a subcritical regime, where the interaction strength is sufficiently high so that 
the random walk exhibits cutoff at the same time as in the single-community case,  
and a supercritical regime, where the low interaction strength results in a smooth--rather than abrupt--mixing, and the convergence to equilibrium is driven by the occurrence of the inter-community transitions. In both these works, the intensity of inter-community connections is modeled via a parameter $\alpha\in[0,1]$, and the critical scaling for this parameter is shown to correspond to the inverse of the entropic time of a single community. Nevertheless, in \cite{Ben-Hamou,HSS} the authors do not attempt a refined analysis of the total variation distance profile in the supercritical regime, being their focus on the cutoff/non-cutoff transition.

In this work, our aim is to extend their results to another class of directed random graphs with community structures and also to complete \emph{trichotomy picture} by providing sharp results on the total variation profile in the critical and supercritical cases.

To this end, we will consider a block model with $m > 1$ communities, 
constructed from $m$ independent directed \er random graphs on $n$ vertices (the \emph{communities}) and a random rewiring procedure governed by a parameter
$\alpha \equiv \alpha_n \in [0,1]$, which introduces connections between these communities. The smaller $\alpha$, the sparser the inter-community connectivity.
Specifically, we will work in a \emph{weakly sparse} regime, setting the probability of connection for the \er communities to
$p \equiv p_n := \lambda \log(n) / n$, for a constant $\lambda > 1$, and assuming
that each edge is \emph{rewired} so as to point to a different community with probability $\alpha\gg (\lambda n\log n)^{-1}$. 
On the one hand, 
the choice $\lambda>1$ ensures that, before the rewiring, each community is strongly connected with high probability (see, e.g., \cite{CF}). On the other hand, the requirement $\alpha\gg (n\log n)^{-1}$ is needed for the communities to be typically connected, even though such connections may be sparse.

We will then analyze the convergence to the equilibrium
of a random walk on the resulting graph, letting
the interaction parameter $\alpha$, or rather the sequence $\alpha_n$, $n \in \mathbb{N}$, vary on $[0,1]$.
As a main result, as stated in Theorem \ref{thm:main}, we will show that
the dynamics exhibits a mixing trichotomy with a critical regime at $\alpha^{-1} \asymp \tent$, in line with the findings in \cite{Ben-Hamou,HSS}. 
For higher values of $\alpha$, the system is in a subcritical regime with cutoff at time $\tent$, as described in Eq.~\eqref{eq:sub-cutoff}, while for lower values of $\alpha$, 
the system enters a supercritical regime where the cutoff behavior is disrupted.  
In this regime, we will then identify two relevant timescales.  
\begin{enumerate} 
\item[(i)] 
The first corresponds to the attainment of a local equilibrium within the starting community, which causes a sharp drop in the total variation distance from $1$ to $(m - 1)/m$ at time $\tent$ (see Eq.~\eqref{eq:sup-cutoff}).  
\item[(ii)] The second is a larger timescale of order $\alpha^{-1}$, which governs the convergence to the global equilibrium through a smooth exponential decay of the total variation distance \eqref{eq:sup-meta}.  
\end{enumerate}
At criticality, where the two timescales are of the same order, 
the dynamics exhibits an intermediate behavior, 
as illustrated on the rightmost side of Figure \ref{fig:subcritical-critical} (see Eq. \eqref{eq:crit-cutoffmeta}).
This can be roughly interpreted as a competition between achieving local equilibrium within a community
and the first passage across two communities. We will provide an extended interpretation of this result in Section \ref{suse:results}.

\section{Model and results} \label{sec:model}
In this section, we introduce the necessary notation and preliminaries, 
and we state our main results. 
We begin in Section~\ref{suse:graph-model} by providing a precise definition
of the graph model. Then, in Section~\ref{suse:preliminaries}, we review
key concepts essential for stating and interpreting our main result, Theorem~\ref{thm:main}, which is presented and discussed formally in Section~\ref{suse:results}. 
In Section~\ref{suse:remarks-model}, we comment on our choice of the graph model and explore potential extensions of the framework considered in this paper. The proof of the main result is developed in several sections, from Section~\ref{sec:prel} to Section~\ref{sec:proof}. A detailed road map that describes the structure of this technical part is provided in Section~\ref{suse:organization-proof}.

\subsection{The graph model}\label{suse:graph-model}
We consider the following model, which we call \emph{Directed Block Model} and denote by ${\rm DBM}(n,m,p,\alpha)$:
\begin{enumerate}
\item[(1)] Consider $m\in\N\setminus\{1\}$ independent directed \er random graphs with $n$ vertices and connection probability $p\in(0,1)$, that is, any ordered couple of vertices presents an oriented edge with probability $p$. We call these graphs $G_1=(V_1,E_1),$ $\dots,$ $G_m=(V_m,E_m)$. More precisely, for each $i \le m$, the vertices in $V_i$ will be labeled by the integers in $[n]$, with the superscript $(i)$ identifying their community of membership.
\item[(2)] For each edge of each graph, throw a coin with a success probability $\alpha\in[0,1/2)$. If it is a head, rewire the edge as follows: if the edge is in the graph $G_i$ with $i \le m$ and it goes from $x^{(i)}$ to $y^{(i)}$, for $x,y\in[n]$, remove the edge $(x^{(i)},y^{(i)})$, choose 
$j\in[m]\setminus\{i\}$ uniformly at random, and let the new edge be $(x^{(i)},y^{(j)})$.
\end{enumerate}
We call $G=(V,E)$ such a graph on the $mn$ vertices.
Notice that, rather than a technical constraint, the requirement $\alpha< \frac12$ is a physical assumption that guarantees some sort of \emph{community structure}, since the majority of edges out-going a given vertex point towards its same community (i.e., are not \emph{rewired}).
 Throughout the paper, we will write $\PP$ (resp. $\E$) to denote the probability law (resp. expectation) of the two-step graph generation process just described.
 As usual in the random graph literature, we will be interested in the asymptotic regime in which $n\to\infty$, and all the asymptotic notation will refer to that limit. We will say that an event occurs with high probability (or simply w.h.p.), if the probability of its occurrence is a function of $n$ that converges to $1$ in the limit $n\to\infty$. 
 The dependence of $\alpha$ on $n$ is the main object of the analysis, since we want to understand how the mixing behavior depends on the relation between $\alpha$ and $n$.

In what follows, we will make the following assumptions on the parameters of the model.
\begin{itemize}
\item We will consider $p=\frac{\lambda\log(n)}{n}$ for some $\lambda>1$ to ensure that each graph is strongly connected with high probability (see Section \ref{sec:prel} below).
\item We will also assume $\lambda\asymp 1$, to have a logarithmic average degree. Due to the result in \cite{BP}, this assumption ensures that the random walk in each of the graphs (before the rewiring) exhibits a cutoff with high probability.
\item We will consider $m\asymp 1$.
\end{itemize}

Let us now introduce some further notation. For each vertex $x\in V$, we denote by $D_x^+$ the out-degree of $x$,
and write
$$D_x^+=O_x^++I_x^+\,,$$
where 
\begin{equation}\nonumber
\begin{split}
O_x^+ := \#\{\mbox{out-edges of } x 
\mbox{ pointing to other graphs}\} \\
I_x^+:= \#\{\mbox{out-edges of } x 
\mbox{ not affected by the rewiring}\}
\end{split}
\end{equation}
Similarly, we call $D_x^-$ the in-degree of $x\in V$.

Notice that while the out-degree $D_x^+$ is the same before
and after the rewiring, the in-degree $D_x^-$ could be different,
and for this reason, we introduce the symbol $D_{x,i}^-$, to denote the degree of a vertex $x\in V_i$ before the rewiring procedure. 
It will also be convenient to define the function $c:V\to\{1,\dots, m\}$ that maps each vertex to its community.

We will first consider the random walk on $G_i$ for $i \le m$, i.e., on a graph \emph{before} the rewiring, and for any probability distribution $\mu$ on $V_i$, we write $\quench_\mu^{G_i}(\cdot)$ for its law when the initial position of the walk has distribution $\mu$. Recall that the latter is a \emph{random} law, since it depends on the realization of the environment $G_i$, and for this reason we will refer to it as \emph{quenched law}. If the graph $G_i$ turns out to be strongly connected and aperiodic (which is the case with high probability), we will call $\pi_i$ the stationary distribution of the random walk on $G_i$. 

Similarly, we will consider the random walk on the whole graph $G$ (after the rewiring procedure), and we will denote by $\quench_\mu^{G}(\cdot)$ its law when the initial position of the walk has distribution $\mu$ (on $V$) and $\pi$ is its stationary distribution, if unique.
We will show below that, in the setting we are dealing with, the measures $\pi$ and $(\pi_i)_{i \le m}$ are \whp unique.
To avoid ambiguities, we will conventionally set them to coincide with the uniform distribution (on the corresponding vertex sets) in the unlikely event that they are not uniquely defined.
Let us finally clarify that we consider a discrete-time evolution. For this reason, we will consider times as integers (possibly taking their integer part).

\subsection{Preliminaries}\label{suse:preliminaries}
Recall that, for $i \le m$, $G_i$ denotes an \er random digraph with 
vertex set $V_i$ and connection probability $p= \lambda \frac{\log n}{n}$, with $\lambda>1$ and $\lambda\asymp1$. 
For any $i\le m$, we will let $P_i$ denote the transition kernel of the simple random walk (SRW in short) on the digraph $G_i$, and we will call 
\begin{equation}\label{eq:entropy}
	\entropy\coloneqq \E\left[\log (D_x^+\vee 1)\right] = -\sum_{y\in V_i} \E\left[P_i(x,y) \log (P_i(x,y))\right]
\end{equation} the average row \emph{entropy} of $P_i$. Let us stress that, thanks to the symmetry of the graph model, the above quantity is independent of the choice of $i\le m$ and $x\in V_i$. Moreover, a first order estimate of $\entropy$, for $n\to\infty$, can be easily deduced to be (see \cite{BP}, Prop.~1)
\begin{equation}
	\entropy\sim{\log \log(n)}\,.
\end{equation}
Let us denote the \emph{entropic time} associated to the entropy in \eqref{eq:entropy} with
\begin{equation}\label{eq:tent}
	\tent  \coloneqq \frac{\log(n)}{\entropy}\,.
\end{equation}
The first and second authors proved the following result, showing a \emph{uniform cutoff phenomenon} for the random walk on $G_i$, taking place exactly at the entropic time \eqref{eq:tent}.
\newcommand{\Tup}{T}
\begin{theorem}[\cite{BP}] \label{thm:cutoff}
	Let $\beta>0$ and $\beta\neq 1$. Then for $i \le m$,
	\begin{equation}
		\max_{x \in V_i}
		\big|\|\quench^{G_i}_x(X_{\beta\tent}\in \cdot)-\pi_i\|_{\rm TV}-\one_{\{\beta<1\}}\big|\,
		\pconv0.
	\end{equation}
\end{theorem}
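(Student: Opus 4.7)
The plan is to follow the entropic method for cutoff on directed random graphs, adapted from \cite{BCS1,BCS2} to the weakly sparse regime $p=\lambda\log(n)/n$. Since the two bounds $\beta<1$ and $\beta>1$ require different arguments, I would split the proof into an upper and a lower bound, with the common backbone being concentration of the trajectory measure around $e^{-\entropy t}$, where the entropic time is precisely the scale at which $e^{-\entropy t}$ crosses the typical equilibrium mass $1/n$.

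For the upper bound $\beta>1$, I would couple the forward exploration of the walk with an annealed construction in which out-edges are revealed only when visited. Up to time $t=\beta\tent$, the exploration is dominated by a Galton–Watson-type tree with $\mathrm{Bin}(n,p)$ offspring, so the environment along the trajectory looks i.i.d. The first goal is to show that, outside an exceptional set $\mathcal{B}_t$ of trajectories of total quenched mass $o(1)$, one has $\quench^{G_i}_x(X_0,\dots,X_t)=e^{-\entropy t(1+o(1))}$; this is where the concentration of $\log D_x^{+}$ around $\entropy\sim\log\log n$ enters decisively. Combined with the fact (provable by a second-moment computation on the in-degrees) that in this regime $\pi_i(y)=(1+o(1))/n$ for all but $o(n)$ vertices, one deduces that $\|\quench_x^{G_i}(X_t\in\cdot)-\pi_i\|_{\rm TV}\to 0$ when $\beta>1$.

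For the lower bound $\beta<1$, I would identify a random set $A_t\subset V_i$ with $|A_t|\le e^{\entropy t(1+\eps)}=n^{\beta(1+\eps)}$ that contains the walker at time $t=\beta\tent$ with quenched probability $1-o(1)$. Taking $\eps$ small so that $\beta(1+\eps)<1$ gives $\pi_i(A_t)=o(1)$, whence the total variation distance is $1-o(1)$. The construction of $A_t$ is again via the annealed exploration: trajectories that escape $A_t$ correspond either to unusually large forward neighborhoods or to atypical cycles, both of which have vanishing probability under strong-connectivity estimates (guaranteed by $\lambda>1$, cf.~\cite{CF}).

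The main obstacle, and the reason the statement involves the maximum over $x\in V_i$ rather than a typical $x$, is uniformity in the starting vertex. The entropic argument breaks down for vertices with atypical local neighborhoods, for example with unusually small $D_x^{+}$ or lying on short directed cycles. To handle these, I would introduce a short warm-up window of length $s_n=o(\tent)$ and prove that, uniformly in $x$, the walk enters the set of \emph{entropically good} vertices within $s_n$ steps with quenched probability $1-o(1)$; a suitable such good set, of density $1-o(1)$ under $\pi_i$, can be defined by requiring standard concentration of $\log D_y^{+}$ and of the local in- and out-neighborhoods up to depth $O(\log\log n)$. The strong Markov property then reduces the uniform cutoff to cutoff from a typical initial distribution, closing the proof.
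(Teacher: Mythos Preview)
This theorem is not proved in the present paper; it is quoted from \cite{BP}. So there is no in-paper proof to compare against directly. That said, Section~\ref{sect:entropic-method} adapts the machinery of \cite{BP} to the multi-community setting, and that adaptation gives a faithful picture of what the original argument looks like.

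Your outline is broadly aligned with the entropic method of \cite{BCS1,BCS2,BP}: a quenched law of large numbers for $-\log\mass(X_0,\dots,X_t)$ (Theorem~\ref{thm:LLN} here), a small-set lower bound for $\beta<1$, and a spreading argument for $\beta>1$. The lower bound you sketch is essentially the one the paper reproduces in Eq.~\eqref{eq:lower-bound}, and your warm-up idea for uniformity in $x$ matches the role of $\veps$ together with the short burn-in of length $\ell$ in Eq.~\eqref{eq:bound}.

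There is, however, a genuine gap in your upper bound. Knowing that each \emph{trajectory} has mass $e^{-\entropy t(1+o(1))}$, together with $\pi_i(y)\approx 1/n$, does \emph{not} by itself imply $\|\quench^{G_i}_x(X_t\in\cdot)-\pi_i\|_{\rm TV}\to 0$: the endpoints of those many small-mass trajectories could still accumulate on a set of $o(n)$ vertices, and nothing in your sketch rules this out. What is missing is a pointwise upper bound on $P_i^t(x,y)$. In \cite{BP} (and in Section~\ref{sect:entropic-method} here) this is obtained via the \emph{nice paths} decomposition of Definition~\ref{def:nice-paths}: one splits $t=\s+\h+1$, grows a controlled tree $\mathcal{T}_x(\s)$ whose leaves carry bounded mass, and then runs a conditional concentration argument on the single unrevealed linking edge (Proposition~\ref{prop:drop-positive-part}, via Bernstein's inequality) to show $P^t_{\rm nice}(x,y)\le(1+\delta)\nu(y)+\delta/n$ for a measure $\nu$ close to $\pi_i$. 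The trajectory-mass concentration you invoke is a necessary ingredient---it is precisely what makes each summand small enough for Bernstein to bite---but it is not sufficient on its own, and the step ``concentration of trajectory mass $+$ $\pi_i\approx 1/n$ $\Rightarrow$ mixing'' as written does not go through.
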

Theorem \ref{thm:cutoff} is in the same spirit as the findings in \cite{BCS1,CCPQ} for the directed configuration model.
In Section \ref{sect:entropic-method}, its proof
will be readapted to analyze the random walk on the whole graph ${\rm DBM}(n,m,p,\alpha)$ when $\alpha$ is large enough, and to show the occurrence of a similar cutoff behavior as described in Eq.~\eqref{eq:sub-cutoff} below.
 We point out that its proof does not require an explicit point-wise knowledge of the stationary measures $(\pi_i)_{i \le m}$. 

\subsection{Main results}\label{suse:results}
 We are now in a good position to present our main results. 

\begin{theorem}\label{thm:main}
    Let $G$ be a realization of the random digraph ${\rm DBM}(n,m,p,\alpha)$ defined in Section \ref{suse:graph-model}, and $\tent$ the entropic time given in \eqref{eq:tent}. 
    The following mixing trichotomy takes place.
	\begin{itemize}
		\item \textbf{Subcritical case (Fig.~\ref{fig:subcritical-critical}):} if $\alpha^{-1}\ll \tent$ and $\alpha \le \frac12$, then, for all $\beta> 0$ with $\beta\neq 1$,
		\begin{equation}\label{eq:sub-cutoff}
			\max_{x \in V}
			\big|\|\quench^{G}_x(X_{\beta \tent}\in \cdot)-\pi\|_{\rm TV}-\one_{\{\beta<1\}}\big|
			\pconv0\,.
		\end{equation}
		\item\textbf{Critical case (Fig.~\ref{fig:subcritical-critical}):} if $\alpha^{-1}\sim C \tent$ for some constant $C>0$, then, for all $\beta>0$ with $\beta\neq 1$,
		\begin{equation}\label{eq:crit-cutoffmeta}
			\max_{x \in V}
			\left|\|\quench^{G}_x(X_{\beta \tent}\in \cdot)-\pi\|_{\rm TV}- \one_{\{\beta < 1
				\}} -
			\tfrac{m-1}{m}{\rm e}^{-\frac{\beta}{C}\frac{m}{m-1}} \one_{\{ \beta> 1 \}} \right|
			\pconv0\,.
		\end{equation}
		\item\textbf{Supercritical case (Fig.~\ref{fig:supercritical}):} if $\alpha^{-1}\gg \tent$ and $\alpha^{-1}\ll \lambda n\log(n)$, then
		\begin{itemize}
			\item (local equilibrium at $\tent$) for any $\beta\neq 1$
			\begin{equation}\label{eq:sup-cutoff}
				\max_{x \in V}
				\left|\|\quench^{G}_x(X_{\beta \tent}\in \cdot)-\pi\|_{\rm TV}- \one_{\{\beta < 1
					\}} -
				\tfrac{m-1}{m} \one_{\{ \beta> 1\}} \right|
				\pconv0\,,
			\end{equation}
			\item (whole mixing at $\alpha^{-1}$) for any $\beta>0$
			\begin{equation}\label{eq:sup-meta}
				\max_{x \in V}
				\left|\|\quench^{G}_x(X_{\beta \alpha^{-1}}\in \cdot)-\pi\|_{\rm TV}- \tfrac{m-1}{m}{\rm e}^{-\frac{\beta m}{m-1}} \right|
				\pconv0\,.
			\end{equation}
		\end{itemize}
	\end{itemize}
\end{theorem}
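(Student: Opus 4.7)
The plan is to first harvest a handful of high-probability properties of $G\sim{\rm DBM}(n,m,p,\alpha)$: strong connectedness, concentration of the in- and out-degrees around $\lambda\log n$, the fact that each vertex $x$ satisfies $O_x^+\sim\alpha D_x^+$ and $I_x^+\sim(1-\alpha)D_x^+$, and---by the full $m$-fold symmetry of the construction---that $\pi(V_i)$ concentrates around $1/m$ with $m\,\pi|_{V_i}$ close to the local stationary measure $\pi_i$. I would then view a step of the SRW at $x\in V_i$ as an in-community step (probability $I_x^+/D_x^+$) to a uniform endpoint inside $V_i$, or an out-community step (probability $O_x^+/D_x^+$) that picks a uniform target community and then a uniform endpoint in it. The key auxiliary object is the projected process $c(X_t)\in\{1,\ldots,m\}$, whose distribution will drive the long-time analysis.

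For the subcritical regime $\alpha^{-1}\ll\tent$, I would adapt the entropic argument of \cite{BP} that underlies Theorem~\ref{thm:cutoff}, applied directly to $G$. The crucial inputs are that (i) the in- and out-neighborhoods of a typical vertex in $G$ remain locally tree-like up to depth $(1+\varepsilon)\tent$, and (ii) the per-step entropy of the SRW on $G$ is still $H\sim\log\log n$, since rewiring only replaces a uniform endpoint in $V_i$ by a uniform endpoint in some other $V_j$ without changing the row entropies. The upper bound then follows from showing that at time $(1+\varepsilon)\tent$ the quenched law is spread over roughly $e^{Ht}$ distinct vertices in a way compatible with $\pi$, while the matching lower bound uses that at $(1-\varepsilon)\tent$ this support has $\pi$-mass $o(1)$.

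For the supercritical regime $\alpha^{-1}\gg\tent$, the argument splits according to two timescales. On the short scale $t=\beta\tent$, the expected number of out-community steps is $\alpha t=o(1)$, so with high probability the walk stays in its starting community; conditioned on this event, it is a SRW on the thinned subgraph $\tilde G_i\subset G_i$ of non-rewired edges, which is itself an \er digraph with parameter $p(1-\alpha)\sim p$ and therefore satisfies Theorem~\ref{thm:cutoff} with the same $\tent$. Hence for $\beta>1$ the quenched law is close to $\pi_i$ on $V_i$, and $\pi_i$ viewed as a measure on $V$ differs from $\pi$ by TV distance $\tfrac{m-1}{m}+o(1)$, yielding \eqref{eq:sup-cutoff}. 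On the long scale $t=\beta\alpha^{-1}$, I would prove a homogenization statement: once local equilibrium within a community is reached, the probability of jumping out of community $i$ in a single step concentrates around $\E_{\pi_i}[O_x^+/D_x^+]\sim\alpha$, and the destination community is asymptotically uniform among the other $m-1$. Consequently $c(X_t)$ is well-approximated by the symmetric mean-field chain on $m$ states with jump rate $\alpha$, whose TV distance from the uniform law is $\tfrac{m-1}{m}\bigl(1-\tfrac{m\alpha}{m-1}\bigr)^t\to\tfrac{m-1}{m}\,e^{-\beta m/(m-1)}$, giving \eqref{eq:sup-meta}. The critical statement \eqref{eq:crit-cutoffmeta} is then the direct combination of the fast local cutoff with this mean-field decay, both effects being of order unity when $\beta\tent\sim(\beta/C)\alpha^{-1}$.

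The main obstacle is the homogenization step behind \eqref{eq:sup-meta} and \eqref{eq:crit-cutoffmeta}: one has to show that across the long timescale $\alpha^{-1}$ the cumulative probability of escaping community $i$ self-averages against $\pi_i$ uniformly in the starting point, that the excursion between consecutive out-community jumps has enough time to relax back to local equilibrium, and that the quenched correlations between rewired half-edges and the in-degree profile that defines $\pi$ itself are negligible. This demands quantitative, not merely qualitative, within-community mixing, and a careful renewal-type decomposition of the trajectory into local mixing blocks separated by inter-community transitions.
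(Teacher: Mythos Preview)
Your high-level strategy is correct and aligned with the paper's, but two points deserve flagging.

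First, you list ``$\pi(V_i)\approx 1/m$ and $m\,\pi|_{V_i}\approx\pi_i$'' among the preliminary facts to harvest from symmetry. Exchangeability of the communities only gives $\E[\pi(V_i)]=1/m$; concentration of the random quantity $\pi(V_i)$ is not free. In the paper these approximations are \emph{outputs}, not inputs: $\pi(V_i)=\tfrac1m+o_{\mathbb P}(1)$ is obtained by writing $\pi(V_i)=\sum_x\pi(x)\mathbf P^G_x(X_t\in V_i)$ and invoking the annealed estimate $\mathbf P^G_x(X_t\in V_i)\approx Q^t(c(x),i)$ (Proposition~\ref{prop:quenched-law} and Lemma~\ref{lemma:ciritcal-1/m}), while $\|\tfrac1m\sum_i\pi_i-\pi\|_{\rm TV}=o_{\mathbb P}(1)$ is deduced only after the global upper bound is in hand (Theorems~\ref{thm:pi-approximation} and \ref{thm:pi-approximation-new}). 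Your plan is therefore slightly circular as written; the fix is to postpone these statements and derive them from the annealed/mixing machinery.

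Second, your unified renewal picture for the long supercritical timescale is morally the paper's strategy, but the paper finds it necessary to split the supercritical regime into a \emph{weakly supercritical} window $t_{\rm ent}\ll\alpha^{-1}\ll\sqrt n\,\log^{-2}n$ and a \emph{strongly supercritical} one $\sqrt n\,\log^{-2}n\lesssim\alpha^{-1}\ll\lambda n\log n$. In the former, a direct high-moment annealed computation (Proposition~\ref{prop:quenched-law}) suffices because $K\sim\log^2 n$ annealed trajectories of length $\alpha^{-1}$ are essentially disjoint; in the latter this fails, and the paper implements your renewal idea rigorously via \emph{gates} (vertices with at least one rewired out-edge), a quasi-stationary distribution on $V_i\setminus\mathcal G_i$, and a coupling with a toy process whose inter-gate excursions are i.i.d.\ geometric (Section~\ref{sec:super-strong}). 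Your description of the homogenization obstacle is accurate, but a single renewal argument valid across the whole range would have to handle both regimes, and the paper's experience suggests the annealed shortcut in the weakly supercritical window is the cleaner route. Finally, the critical case is not quite a ``direct combination'': the upper bound there (Lemma~\ref{lemma:desired}) re-enters the entropic nice-path machinery and compares $P^t(x,\cdot)$ to $\sum_j Q^t(c(x),j)\pi_{V_j}$ rather than simply concatenating the two limiting profiles.
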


\begin{figure}[h!]
        \centering
\begin{subfigure}{.475\linewidth}
		\centering
        \begin{tikzpicture}[baseline]
		\begin{axis}[ 
			width=8cm,
			height=6cm,
			axis lines=middle,
			xlabel={$t=\beta\tent$},
			ytick=\empty,
			xtick=\empty,
			xmin=-0.5,
			xmax=5.5,
			ymin=-1,
			ymax=12,
			extra x ticks={1},
			extra y ticks={11,0}, 
			extra x tick labels={$\beta=1$},
			extra y tick labels={\raisebox{0pt}{$1$},\raisebox{17pt}{$0$}},
			samples=100,
			domain=0:17,
			legend style={draw=none,at={(.98,.35)},anchor=south east}
			]
			\addplot [domain=0:0.2, samples=100, color=blue!80!black, line width=1.2pt](x,11);
			\addplot [domain=0.2:0.4, samples=100, color=blue!70!white, line width=1.2pt](x,11);
			\addplot [domain=0.4:0.6, samples=100, color=blue!40!white, line width=1.2pt](x,11);
			\addplot [domain=0.6:0.8, samples=100, color=blue!20!white, line width=1.2pt](x,11);
			\addplot [domain=0.8:0.95, samples=100, color=blue!10!white, line width=1.2pt](x,11);
			\addplot [domain=0.1:10.7, samples=100, color=black!90!white, dotted, line width=1.2pt](1,x);  
			\addplot [only marks, mark= o, color = black, line width=1.2 pt] table {
				1 11
			};
            \addplot [domain=4.09:4.85, samples=100, color=blue!10!white, line width=1.2pt](x,0);
            \addplot [domain=3.33:4.09, samples=100, color=blue!20!white, line width=1.2pt](x,0);
            \addplot [domain=2.57:3.33, samples=100, color=blue!40!white, line width=1.2pt](x,0);
            \addplot [domain=1.81:2.57, samples=100, color=blue!70!white, line width=1.2pt](x,0);
		      \addplot [domain=1.05:1.81, samples=100, color=blue!80!black, line width=1.2pt](x,0);
			\legend{,,,,,,,$m=6$,$m=5$,$m=4$,$m=3$,$m=2$}
		\end{axis}
	\end{tikzpicture}
    	\end{subfigure}%
	\hfill%
	\begin{subfigure}{.475\linewidth}
        \centering
	\begin{tikzpicture}[baseline]
		\begin{axis}[ 
			width=8cm,
			height=6cm,
			axis lines=middle,
			xlabel={$t=\beta\tent$},
			ytick=\empty,
			xtick=\empty,
			xmin=-0.5,
			xmax=5.5,
			ymin=-1,
			ymax=12,
			extra x ticks={1},
			extra y ticks={11,0}, 
			extra x tick labels={$\beta=1$},
			extra y tick labels={\raisebox{0pt}{$1$},\raisebox{17pt}{$0$}},
			samples=100,
			domain=0:17,
			legend style={draw=none,at={(.98,.35)},anchor=south east}
			]
			\addplot [domain=0:0.2, samples=100, color=viola!80!black, line width=1.2pt](x,11);
			\addplot [domain=0.2:0.4, samples=100, color=viola!70!white, line width=1.2pt](x,11);
			\addplot [domain=0.4:0.6, samples=100, color=viola!40!white, line width=1.2pt](x,11);
			\addplot [domain=0.6:0.8, samples=100, color=viola!20!white, line width=1.2pt](x,11);
			\addplot [domain=0.8:0.95, samples=100, color=viola!10!white, line width=1.2pt](x,11);
			\addplot [domain=0.1:10.7, samples=100, color=black!90!white, dotted, line width=1.2pt](1,x);  
			\addplot [only marks, mark= o, color = black, line width=1.2 pt] table {
				1 11
			};
			\addplot [domain=1.05:3.95, samples=500, color= viola!10!white , line width=1.2pt]{
				11*(6 -1)/6*e^(-((x-1)/2)*6/(6 - 1))};
			\addplot [domain=1.05:3.95, samples=500, color= viola!20!white , line width=1.2pt]{
				11*(5 -1)/5*e^(-((x-1)/2)*5/(5 - 1))};
			\addplot [domain=1.05:3.95, samples=500, color= viola!40!white , line width=1.2pt]{
				11*(4 -1)/4*e^(-((x-1)/2)*4/(4 - 1))};
			\addplot [domain=1.05:3.95, samples=500, color= viola!70!white , line width=1.2pt]{
				11*(3 -1)/3*e^(-((x-1)/2)*3/(3 - 1))};
			\addplot [domain=1.05:3.95, samples=500, color= viola!80!black , line width=1.2pt]{
				11*(2 -1)/2*e^(-((x-1)/2)*2/(2 - 1))};
			\legend{,,,,,,,$m=6$,$m=5$,$m=4$,$m=3$,$m=2$}
		\end{axis}
	\end{tikzpicture}
    	\end{subfigure}
	\caption{Plot of the (theoretical) limiting mixing profile in the subcritical case (left) and critical case (right) with $C=2$ and $m=2,3,4,5,6$.}
    \label{fig:subcritical-critical}
\end{figure}
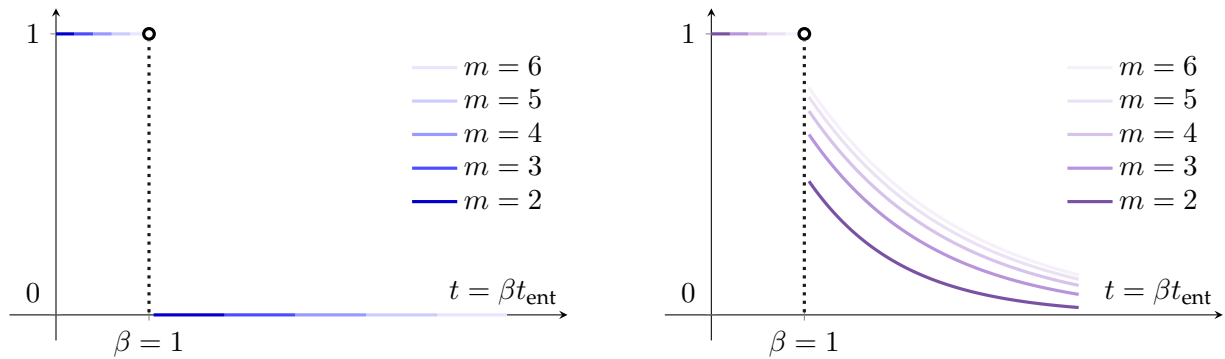

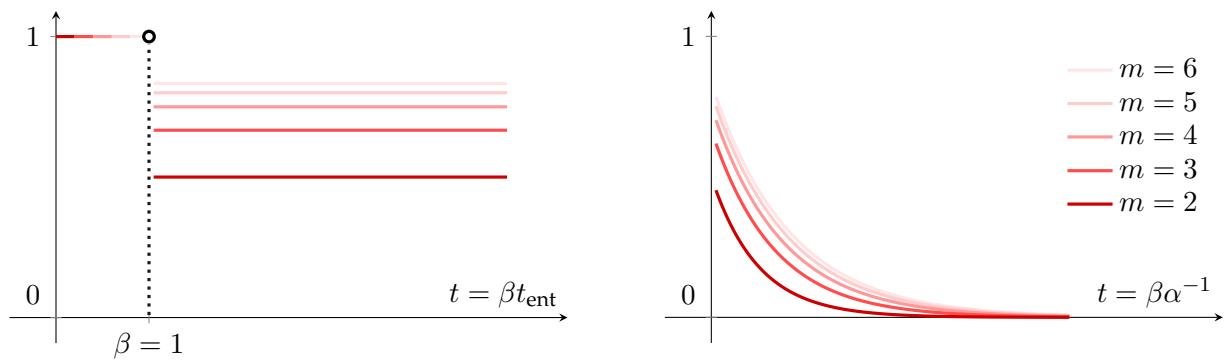
\begin{figure}[!h]
	\centering
	\begin{subfigure}{.475\linewidth}
		\centering
		\begin{tikzpicture}[baseline]
				\begin{axis}[ 
				width=8cm,
				height=6cm,
				axis lines=middle,
				xlabel={$t=\beta\tent$},
				ytick=\empty,
				xtick=\empty,
				xmin=-0.5,
				xmax=5.5,
				ymin=-1,
				ymax=12,
				extra x ticks={1},
				extra y ticks={11,0}, 
				extra x tick labels={$\beta=1$},
				extra y tick labels={\raisebox{0pt}{$1$},\raisebox{17pt}{$0$}},
				samples=100,
				domain=0:17,
				legend style={draw=none,at={(.98,.35)},anchor=south east}
				]
				\addplot [domain=0:0.2, samples=100, color=red!80!black, line width=1.2pt](x,11);
				\addplot [domain=0.2:0.4, samples=100, color=red!70!white, line width=1.2pt](x,11);
				\addplot [domain=0.4:0.6, samples=100, color=red!40!white, line width=1.2pt](x,11);
				\addplot [domain=0.6:0.8, samples=100, color=red!20!white, line width=1.2pt](x,11);
				\addplot [domain=0.8:0.95, samples=100, color=red!10!white, line width=1.2pt](x,11);
				\addplot [domain=0.1:10.7, samples=100, color=black!90!white, dotted, line width=1.2pt](1,x);  
				\addplot [only marks, mark= o, color = black, line width=1.2 pt] table {
					1 11
				};
				\addplot [domain=1.05:4.85, samples=500, color= red!10!white , line width=1.2pt]{11*(6 -1)/6};
				\addplot [domain=1.05:4.85, samples=500, color= red!20!white , line width=1.2pt]{11*(5 -1)/5};
				\addplot [domain=1.05:4.85, samples=500, color= red!40!white , line width=1.2pt]{11*(4 -1)/4};
				\addplot [domain=1.05:4.85, samples=500, color= red!70!white , line width=1.2pt]{11*(3 -1)/3};
				\addplot [domain=1.05:4.85, samples=500, color= red!80!black , line width=1.2pt]{11*(2 -1)/2};
			\end{axis}
		\end{tikzpicture}
	\end{subfigure}%
	\hfill%
	\begin{subfigure}{.475\linewidth}
		\centering
		\begin{tikzpicture}[baseline]
				\begin{axis}[ 
				width=8cm,
				height=6cm,
				axis lines=middle,
				xlabel={$t=\beta\alpha^{-1}$},
				ytick=\empty,
				xtick=\empty,
				xmin=-0.5,
				xmax=5.5,
				ymin=-1,
				ymax=12,
				extra x ticks={},
				extra y ticks={11,0}, 
				extra x tick labels={},
				extra y tick labels={\raisebox{0pt}{$1$},\raisebox{17pt}{$0$}},
				samples=100,
				domain=0:17,
				legend style={draw=none,at={(.98,.35)},anchor=south east}
				]
				\addplot [domain=0.05:3.85, samples=500, color= red!10!white , line width=1.2pt]{
					11*(6-1)/6*e^(-((x))*6/(6 - 1))};
				\addplot [domain=0.05:3.85, samples=500, color= red!20!white , line width=1.2pt]{
					11*(5-1)/5*e^(-((x))*5/(5 - 1))};
				\addplot [domain=0.05:3.85, samples=500, color= red!40!white , line width=1.2pt]{
					11*(4-1)/4*e^(-((x))*4/(4 - 1))};
				\addplot [domain=0.05:3.85, samples=500, color= red!70!white , line width=1.2pt]{
					11*(3-1)/3*e^(-((x))*3/(3 - 1))};
				\addplot [domain=0.05:3.85, samples=500, color= red!80!black , line width=1.2pt]{
					11*(2-1)/2*e^(-((x))*2/(2 - 1))};
				\legend{$m=6$,$m=5$,$m=4$,$m=3$,$m=2$}
			\end{axis}
		\end{tikzpicture}
	\end{subfigure}
	\caption{Plot of the (theoretical) limiting mixing profile in the supercritical case, with $m=2,3,4,5,6$, in the two timescales $t \asymp \tent $ (left) and $t \asymp \alpha^{-1}$ (right).}
    \label{fig:supercritical}
\end{figure}

Theorem \ref{thm:main} represents a neat example of the trichotomy phenomenon described in Section \ref{sec:intro}, and the mechanisms underlying this behavior are easy to read through the mathematical statement and with the help of Figures \ref{fig:subcritical-critical}
and \ref{fig:supercritical}. 
 In the subcritical case, the mixing behavior of the walk is totally unaffected by the presence of a macroscopic community structure, since the inter-community jumps occur on a much shorter timescale compared to the entropic time, which represents the time needed to reach the local equilibrium in a single community. In contrast, in the supercritical phase, the random walk abruptly reaches the local equilibrium of the community where it started, in the time $\tent$. Then, the process essentially behaves as a mean-field random walk on the communities, i.e., as a Markov chain with transition matrix
\begin{equation}\label{eq:trans-complete}
	Q=\left(\begin{matrix} 
		1-\alpha            & \frac{\alpha}{m-1} &     \dots            & \frac{\alpha}{m-1} \\
		\frac{\alpha}{m-1}  &  \ddots            &     \ddots           &  \vdots\\
		\vdots              &  \ddots            &      \ddots          &   \frac{\alpha}{m-1\phantom{|}}\\     
		\frac{\alpha}{m-1}  &  \dots             &  \frac{\alpha}{m-1}  &  1-\alpha 
	\end{matrix}\right)\,,
\end{equation}
which is in fact easily checked to exhibit the mixing behavior in \eqref{eq:sup-meta}.
Finally, in the critical case, the two behaviors interpolate, 
giving rise to the half-cutoff picture, as shown on the right side of Figure~\ref{fig:subcritical-critical}. 
In light of this interpretation, the reader might already foresee
that the proof of Theorem \ref{thm:main} essentially reduces to establishing 
a form of \emph{homogenization} property for the random environment. 
This will be achieved through different methods in the (sub)critical 
and supercritical cases, and by identifying two distinct sub-regimes within the supercritical phase.
We will provide more details on the organization of the proofs in Section \ref{suse:organization-proof}.

\subsection{Some comments on the graph model}\label{suse:remarks-model}
As mentioned in the Introduction, this work aims to establish a simple yet natural framework 
for studying the mixing trichotomy induced by the presence of a bottleneck in the state space. 
In this sense, the model we consider is somehow the \emph{minimal one} to this aim, both in terms of notation and of technicalities required for a rigorous proof. However, it would be possible to extend our findings to more general versions of the model presented in Section \ref{suse:graph-model}. In this section, we present some remarks in the direction of such generalizations.

\subsubsection*{Weakening the assumptions on $\lambda$}
Let us stress that our assumptions on the connectivity parameter $\lambda$ are not expected to be \emph{sharp} to prove a mixing trichotomy. For example, $\lambda>1$ is sufficient but not necessary to guarantee the strong connectivity of communities, and it would be enough to have $(\lambda-1)\log(n)\to +\infty$ as $n\to+\infty$ \cite{CF}.
On the other hand, we do not expect $\lambda\asymp 1$ to be sharp either; rather, we anticipate that any $\lambda=n^{o(1)}$ would yield similar results. In fact, as long as $\lambda=n^{o(1)}$, it should be possible to show 
that the random walk on a single community exhibits a cutoff at the entropic time, which itself forms a divergent sequence in terms of $n$. 
However, since this observation is not rigorously stated in any of the
aforementioned works, we retain this generalization here.

\subsubsection*{Less rigid rewirings}
We chose to define the rewiring procedure in the model as being strictly tied to the ``labels'' of the vertices. The same result can be obtained assuming that the edges are rewired uniformly at random. Although this would require only minor adjustments, it would introduce additional notation to describe the rewiring process. To maintain clarity, we choose to omit this generalization. We believe that the argument presented in the paper is robust to more general non-singular rewiring mechanisms, provided that the rewiring is uniform among the other communities.

\subsubsection*{Removing dependencies between inter- and intra-community out-degrees}
	The construction in Section \ref{suse:graph-model}, introduces, for every $x \in V$, a correlation between the random variables $O_x^+$ and $I_x^+$, which is not present in the context of the classical stochastic block model. 
	However, by randomizing the number of vertices of the graph, it is possible to remove this kind of dependencies. In particular,
assuming that each community has a number $N\sim \pois(n)$ of vertices, for every $n \in \N$, it holds that 
	$O_x^+\sim \bin(D_x^+,\alpha) \sim \pois(n\alpha p)$
	and $I_x^+\sim \bin(D_x^+,1-\alpha) \sim \pois(n(1-\alpha) p)$. Moreover, the last two variables turn out to be independent as desired.
    Being $|N-n|=O(\sqrt{n})$ w.h.p., the arguments given in the proofs remain valid and lead to the same results.

\subsubsection*{Heterogeneous communities}
We believe that, with some extra work, the results presented in this work could be generalized to the case
%
in which the communities have different intra-community connectivities, say $\lambda_1,\dots,\lambda_m$, with $\min_{i \le m}\lambda_i>1$, different rewiring parameters $(\alpha_{i,j})_{i,j \le m}$ with $\max_{j,k\le m}\frac{\alpha_{i,j}}{\alpha_{i,k}}\lesssim1$, or with different sizes, say $n_1,\dots, n_m$, with $\max_{i,j\le m}\frac{n_i}{n_j}\lesssim1$. 
%
%
In particular, while the result in Theorem \ref{thm:main} should remain unaffected by the heterogeneity of the $\lambda_j$'s, we believe that in the latter two cases the matrix $Q$ has to be modified, 
and the (super)critical exponential profile should change to a more general mixing profile, depending on the asymptotic behavior of the corresponding auxiliary Markov chain.
Also notice that when the community sizes are different, the rewiring procedure described in Section \ref{suse:graph-model} is not well defined, and should be suitably modified, e.g., by allowing a rewired edge to point to a uniformly random vertex in the new chosen community, as mentioned above.

\subsubsection*{Diverging number of communities}
Although not the focus of our study, Theorem \ref{thm:main} suggests that even when the number of communities is slowly diverging, the characterization of the critical and supercritical regimes should remain unchanged.
However, if $m \gg1$ grows sufficiently fast, 
the mixing behavior of the model could undergo significant changes. 
For instance, choosing the parameters $(\alpha,m,\lambda)$ so that 
the average number of rewired edges within each community is $\asymp \log(m)$, we expect a cutoff behavior even in the subcritical regime $\alpha^{-1} \gg \tent$. 

In particular, rescaling the time by $\alpha$, we expect that the dynamics 
is well approximated by a simple random walk on a coarse-grained graph obtained by collapsing each community to a single point and erasing multiple edges. 
In that case, the coarse-grained graph is precisely a
directed \er random graph on $m$ vertices in the weakly sparse regime, 
for which cutoff is now well known to take place.
In conclusion, in that setting and for $\alpha^{-1} \gg \tent$, 
we expect to observe a cutoff  on the timescale $t_{{\rm ent}, m}=\alpha^{-1}\log(m)/\entropy_{m}\gg1$, where
$\entropy_{m} \asymp \log \log (m)$ is the row-entropy of the random walk on the coarse-grained graph.

 \subsection{Organization of the proof}\label{suse:organization-proof}
The proof of Theorem \ref{thm:main} is articulated on several parts, depending on the different regimes for the parameter $\alpha$. We now provide a road map to the forthcoming sections.
We start in Section \ref{sec:prel} by presenting some preliminary facts that will be needed throughout the analysis of both the sub- and supercritical case. In Section \ref{sec:super-weak} we deal with the subregion of the
supercritical regime where $\tent\ll\alpha^{-1}\ll\sqrt{n}\log^{-2}(n)$, which we will call \emph{weakly supercritical}. In this region, a control on the total variation distance to equilibrium for both the timescales appearing in Eq.~\eqref{eq:sup-cutoff} and Eq.~\eqref{eq:sup-meta} can be obtained by relying on the analysis of the so-called \emph{annealed random walk}, which will be introduced in Section \ref{sec:prel}. Section \ref{sec:super-strong} is the technical core of our work, and it deals with the remaining subregion of the supercritical case, i.e. $\sqrt{n}\log^{-2}(n)\lesssim\alpha^{-1}\ll \lambda n\log(n)$, which we will refer to as \emph{strongly supercritical}. 
In this case, arguments based on \emph{annealed random walks} 
are doomed to fail, requiring us to employ alternative technical tools
to establish a form of \emph{homogenization property} of the graph. 
This, in turn, offers a level of control over the total variation distance comparable to that achieved in Section \ref{sec:super-weak} for the \emph{weakly supercritical} case.
 Section \ref{sect:entropic-method} deals with the analogue of the results in the previous sections but for the subcritical and the critical case. Here, the arguments can be seen as an adaptation of those in \cite{BCS1,CCPQ,BP} to the DBM case. Finally, in Section \ref{sec:proof} we collect all the estimates obtained throughout the paper and conclude the proof of Theorem \ref{thm:main}.

\section{Approximations and auxiliary processes}\label{sec:prel}
 In this section we aim at presenting some background material and the first statements about the behavior of the random walk on \emph{short timescales}, i.e., for $t\ll\sqrt{n}\log(n)^{-2}$. These results will be used later in the paper in all three regimes.
 In particular, in Section \ref{suse:approx-pi} we present some approximation of the stationary distribution of the \emph{local} random walks in terms of the in-degree sequences; in Section \ref{suse:local-neigh} we recall some classical facts about the local structure of a (weakly) sparse graph seen from a vertex; while in Section \ref{suse:annealed} we introduce the \emph{annealed random walk} and use it as a tool to provide some key approximation for the law of the inter-community jumps performed by the random walk, see Proposition \ref{prop:quenched-law}. 

\subsection{Approximating the stationary distribution}\label{suse:approx-pi}
After the statement of Theorem \ref{thm:cutoff}, we claimed that the proof does not require explicit knowledge of the stationary distribution. Nevertheless, a uniform first-order approximation for the latter in our setup has been obtained by Cooper and Frieze,
as stated by the following theorem.
\begin{theorem}[\cite{CF}] \label{thm:pi-char}
	The local stationary distributions are approximated at first order and uniformly by the (normalized) local in-degree sequences. In formula,
	\begin{equation}
		\max_{i \le m}\max_{x\in V_i}\left| \frac{\pi_i(x)}{\frac{D_{x,i}^-}{pn^2}}-1\right| \pconv 0\,.
	\end{equation}
\end{theorem}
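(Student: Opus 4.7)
The plan is to propose $\tilde\mu(x) := D^-_{x,i}/(pn^2)$ as an explicit candidate for $\pi_i$ and to prove it is an approximate left eigenvector of $P_i$ with eigenvalue one, uniformly in $x$. The heuristic comes from the flow-balance equation
\[
\pi_i(x) \;=\; \sum_{y \in N^-(x)} \frac{\pi_i(y)}{D^+_y},
\]
in the weakly sparse \er regime, where both $D^+_y$ and $D^-_{x,i}$ concentrate around $np$: treating $\pi_i$ as essentially constant on the right-hand side and summing yields $\pi_i(x) \propto D^-_{x,i}$, with the normalization $pn^2$ coming from $\sum_x D^-_{x,i} \approx pn^2$.

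\textbf{Key estimates.} A Chernoff bound for $\text{Bin}(n-1,p)$ together with a union bound over $V_i$ gives, using crucially $np = \lambda\log n \to \infty$, the uniform concentration
\[
\max_{x\in V_i}\left|\frac{D^+_x}{np}-1\right| \;\vee\; \max_{x\in V_i}\left|\frac{D^-_{x,i}}{np}-1\right| \;=\; o(1) \quad \whp,
\]
which also implies $\sum_x \tilde\mu(x) = 1+o(1)$. The push-forward then reads
\[
(\tilde\mu P_i)(x) \;=\; \frac{1}{pn^2}\sum_{y\in N^-(x)}\frac{D^-_y}{D^+_y} \;=\; \frac{1+o(1)}{n^3 p^2}\sum_{y\in N^-(x)} D^-_y,
\]
so approximate invariance $(\tilde\mu P_i)(x) = (1+o(1))\tilde\mu(x)$ reduces to the uniform length-two path count
\[
\sum_{y\in N^-(x)} D^-_y \;=\; (1+o(1))\, D^-_{x,i}\cdot np\qquad \forall\, x \in V_i,\ \whp,
\]
whose conditional mean given $N^-(x)$ equals $D^-_{x,i}(n-1)p$.

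\textbf{Main obstacle and conclusion.} The chief difficulty lies in this last estimate, since given $N^-(x)$ the variables $\{D^-_y\}_{y\in N^-(x)}$ are only weakly correlated (distinct $y$'s may share in-neighbors, but their typical pairwise overlap is $np^2 = o(1)$); a second-moment computation sharpened by a Bernstein-type tail bound, conditioned on $N^-(x)$ and followed by a union bound over $x \in V_i$, handles it thanks to the fact that $np \asymp \log n$. Once $\tilde\mu$ is approximately invariant, two routes close the argument: (i) iterate from $\tilde\mu$ past the mixing time, tracking the defect in an $L^2(\pi_i)$-sense so as to exploit the contraction of $P_i$ on mean-zero functions rather than pay a linear-in-$k$ cost for a naive $L^\infty$ iteration; or (ii) invoke the Markov chain tree theorem, writing $\pi_i(x)$ as the (weighted) count of spanning in-arborescences rooted at $x$, and observe that since every such arborescence selects exactly one in-edge at $x$, the leading asymptotics factor as $D^-_{x,i}$ times a quantity independent of $x$, which must equal $1/(pn^2)$ by normalization. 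Either route yields $\pi_i(x)/\tilde\mu(x) = 1+o(1)$ uniformly in $x$, and a further union bound over the $O(1)$ communities gives the statement for $\max_{i\le m}\max_{x\in V_i}$.
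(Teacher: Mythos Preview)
The paper does not prove this theorem; it is quoted from Cooper--Frieze \cite{CF} as a black box. So there is no ``paper's own proof'' to compare against, and your task was really to recover a nontrivial external result.

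Your sketch contains a genuine error at the very first step. You claim that a Chernoff bound plus a union bound over $V_i$ yields
\[
\max_{x\in V_i}\left|\frac{D^+_x}{np}-1\right|=o(1)\qquad\text{w.h.p.},
\]
but this is false in the regime $np=\lambda\log n$ with $\lambda\asymp 1$. The Chernoff bound gives $\PP(|D^+_x-np|>\varepsilon np)\le 2\exp(-c\,\varepsilon^2\lambda\log n)=2n^{-c\varepsilon^2\lambda}$, and to survive a union bound over $n$ vertices you need $c\varepsilon^2\lambda>1$, forcing $\varepsilon$ to be a \emph{fixed positive constant}. This is exactly why Proposition~\ref{prop:concentration-degrees} in the paper only asserts $C_1\lambda\log n\le D^\pm_x\le C_2\lambda\log n$ with constants $C_1<1<C_2$, and why the theorem itself has nontrivial content: $\pi_i$ is genuinely non-uniform at first order, being proportional to the \emph{fluctuating} in-degree. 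Once the uniform concentration fails, your computation of $(\tilde\mu P_i)(x)$ breaks down: you can no longer replace $D^-_y/D^+_y$ by $1+o(1)$ uniformly in $y$, and the sum $\sum_{y\in N^-(x)}D^-_y/D^+_y$ has only $\Theta(\log n)$ terms with $\Theta(1)$ individual fluctuations, which is too few to beat a union bound over $n$ vertices by a naive second-moment or Bernstein argument.

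Your route (ii) via the matrix-tree theorem is also not a proof: the assertion that the number of in-arborescences ``factors as $D^-_{x,i}$ times a quantity independent of $x$'' at leading order is precisely the statement to be proved, and you give no mechanism for it. Cooper and Frieze's actual argument is considerably more delicate: it proceeds through a first-visit-time analysis and careful control of short-time return probabilities to show that the expected number of returns to $x$ before mixing is $1+o(1)$ uniformly, from which the formula $\pi_i(x)\sim D^-_{x,i}/\sum_y D^-_{y,i}$ is extracted. The regime $np\asymp\log n$ is exactly the borderline where this requires work.
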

The latter result is partially based on the fact that, given the choice $\lambda>1$, uniform concentration properties for the degrees of vertices of $G$ are true.
\begin{proposition}\label{prop:concentration-degrees}
	There exist two positive constants $C_1$ and $C_2$ such that, \whp,
	\begin{equation}\nonumber
		\max_{i \le m}\max_{x\in V_i}\max\{D_x^+,D_x^-,D_{x,i}^-\} \le C_2\lambda\log(n)\quad\text{and}\quad \min_{i \le m}\min_{x\in V_i}\min\{D_x^+,D_x^-,D_{x,i}^-\} \ge C_1\lambda\log(n)\,.
	\end{equation}
\end{proposition}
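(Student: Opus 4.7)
The plan is to view each of the three degree statistics as a sum of independent Bernoulli random variables with mean of order $\lambda\log(n)$, and then apply the classical Chernoff large-deviation bound together with a union bound over the $mn$ vertices.

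The first two cases are immediate: $D_x^+\sim\bin(n-1,p)$, since the rewiring preserves out-degrees, and $D_{x,i}^-\sim\bin(n-1,p)$ by definition, being the in-degree of $x$ in the unrewired \er graph $G_i$. For the post-rewiring in-degree $D_x^-$ of a vertex $x^{(i)}\in V_i$, one uses the fact that the rewiring preserves the \emph{label} of the target and only randomizes its community; consequently, an edge can contribute to $D_x^-$ only if it originally targeted $x^{(j)}$ in some $G_j$. This yields the decomposition
\[
D_x^- \;=\; \sum_{u\in V_i\setminus\{x\}} \xi_{u}^{(i,i)} \;+\; \sum_{j\ne i}\;\sum_{u\in V_j\setminus\{x\}} \xi_{u}^{(j,i)},
\]
where $\xi_{u}^{(i,i)}\sim\ber(p(1-\alpha))$ encodes that $(u,x)$ was present in $G_i$ and was not rewired, and $\xi_{u}^{(j,i)}\sim\ber(p\alpha/(m-1))$ encodes that $(u,x)$ was present in $G_j$, was rewired, and the rewiring picked $i$ as the new target community. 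Since the $G_j$'s, the rewiring coins, and the target-community choices are mutually independent, so are these Bernoullis, and a direct computation gives $\E[D_x^-]=(n-1)p\sim\lambda\log(n)$.

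From here the argument is routine. Setting $\mu:=(n-1)p$, the classical Chernoff bound for sums of independent Bernoullis yields
\[
\PP(D\ge b\mu)\le \exp\bigl(-\mu\, J(b)\bigr),\qquad \PP(D\le a\mu)\le \exp\bigl(-\mu\, I(a)\bigr),
\]
for $b>1$ and $a\in(0,1)$, with $J(b)=b\log b-b+1\to\infty$ as $b\to\infty$ and $I(a)=a\log a-a+1\to 1$ as $a\to 0^+$, applied to each $D\in\{D_x^+,D_{x,i}^-,D_x^-\}$. Since $\mu\sim\lambda\log(n)$ and $\lambda>1$, one can pick $C_2$ large enough that $\lambda J(C_2)>1$ and $C_1$ small enough that $\lambda I(C_1)>1$—the latter being feasible precisely because $I(a)\to 1$ as $a\to 0^+$ and $\lambda$ exceeds $1$. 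Each deviation event then has probability at most $n^{-1-\eta}$ for some $\eta>0$, and the union bound over the $mn=O(n)$ vertices and the three degree statistics closes the argument. No real obstacle arises: the only mildly delicate observation is that $D_x^-$ retains a sum-of-independent-Bernoullis structure under the rewiring, and once this is in hand the conclusion follows from the standard Chernoff-plus-union-bound template, with the hypothesis $\lambda>1$ playing exactly the role needed to beat the linear factor from the union bound.
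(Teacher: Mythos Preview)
Your proof is correct and follows essentially the same route as the paper, which simply invokes ``standard concentration bounds for binomial random variables and an application of the union bound.'' Your write-up usefully spells out the independent-Bernoulli decomposition of $D_x^-$ under the rewiring (which the paper glosses over) and makes explicit where the hypothesis $\lambda>1$ enters, namely to ensure $\lambda I(C_1)>1$ for the lower tail.
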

\begin{proof}
	The thesis follows immediately from standard concentration bounds for binomial random variables and an application of the union bound.
\end{proof}
As a consequence we may conclude that \whp the stationary distribution $\pi_i$ is uniform, up to corrections bounded away from zero and infinity, and this will be helpful in the characterization of the stationary distribution of the random walk on the whole $G$. 
This fact can then be used to deduce the following byproduct of Theorems \ref{thm:cutoff}, \ref{thm:pi-char}, and Proposition \ref{prop:concentration-degrees}, which allows us to control the $\ell^\infty$-distance to equilibrium of local random walks.
\begin{corollary}\label{coro:mixing}
	Let $S=3 \tent \log(n)$. Then 
	\begin{equation} \label{fast-mixing}
		\max_{i \le m}\max_{x,y\in V_i}
		\left|\frac{\quench^{G_i}_x(X_{S}\in y)}{\pi_i(y)}-1\right|=\op(1) .
	\end{equation}
\end{corollary}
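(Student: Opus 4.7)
The plan is mostly mechanical: upgrade the total variation cutoff of Theorem~\ref{thm:cutoff} to the uniform pointwise estimate \eqref{fast-mixing} by a standard submultiplicativity argument, and combine this with the lower bound $\pi_i(y)\gtrsim 1/n$ provided by Theorem~\ref{thm:pi-char} together with Proposition~\ref{prop:concentration-degrees}. The key observation is that the extra $\log n$ factor in $S=3\tent\log n$ is enough to turn the $\op(1)$ TV bound at the entropic time into a pointwise multiplicative error that is polynomially small in $n$, which then dominates the $1/n$-scale of $\pi_i$.

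First, I would fix some $\eta\in(0,\,3\log 2-1)$, e.g.\ $\eta=1/2$. Applying Theorem~\ref{thm:cutoff} with $\beta=1+\eta$ and union-bounding over the $m\asymp 1$ communities yields, with probability tending to $1$, that
$$
d_i(t) := \max_{x\in V_i}\bigl\|\quench^{G_i}_x(X_{t}\in \cdot)-\pi_i\bigr\|_{\rm TV} \le 1/4 \qquad \text{at } t=(1+\eta)\tent,\ \text{for every }i\le m.
$$
For any finite-state (possibly non-reversible) Markov chain, the Doeblin coefficient
$
\bar d_i(t) := \max_{x,y\in V_i}\bigl\|\quench^{G_i}_x(X_t\in\cdot) - \quench^{G_i}_y(X_t\in\cdot)\bigr\|_{\rm TV}
$
satisfies $\bar d_i(t)\le 2 d_i(t)$ and is submultiplicative in $t$. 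Hence, on the same event, $\bar d_i((1+\eta)\tent)\le 1/2$ for every $i\le m$.

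Setting $k:=\lfloor 3\log n/(1+\eta)\rfloor$, so that $k(1+\eta)\tent\le S$, and using the monotonicity of $d_i$ in $t$, submultiplicativity gives
$$
d_i(S) \le \bigl(\bar d_i((1+\eta)\tent)\bigr)^k \le 2^{-k} \le C\, n^{-3\log 2/(1+\eta)}\qquad \forall\,i\le m,
$$
with exponent strictly greater than $1$ by the choice of $\eta$. On a separate event of probability tending to $1$, Theorem~\ref{thm:pi-char} combined with Proposition~\ref{prop:concentration-degrees} gives $\pi_i(y)\ge c/n$ uniformly in $i\le m$ and $y\in V_i$, since $D^-_{y,i}\ge C_1\lambda\log n$ and $pn^2=\lambda n\log n$.

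Combining the two high-probability events through the pointwise inequality $|\quench^{G_i}_x(X_S=y)-\pi_i(y)|\le d_i(S)$ yields
$$
\max_{i\le m}\max_{x,y\in V_i}\left|\frac{\quench^{G_i}_x(X_S=y)}{\pi_i(y)}-1\right| = O\!\left(n^{1-3\log 2/(1+\eta)}\right) = o(1),
$$
which is precisely \eqref{fast-mixing}. The only potential obstacle is numerical, namely the requirement $3\log 2>1$; this holds with room to spare ($3\log 2\approx 2.08$), so the argument goes through and no genuinely new ingredient beyond Theorems~\ref{thm:cutoff}, \ref{thm:pi-char} and Proposition~\ref{prop:concentration-degrees} is required.
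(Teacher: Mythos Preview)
Your proof is correct and follows essentially the same approach as the paper: both amplify the cutoff bound of Theorem~\ref{thm:cutoff} via submultiplicativity of the Doeblin coefficient to obtain a TV distance that is $o(n^{-1})$ at time $S$, and then divide by the uniform lower bound $\pi_i(y)\gtrsim 1/n$ coming from Theorem~\ref{thm:pi-char} and Proposition~\ref{prop:concentration-degrees}. The only cosmetic difference is in the choice of constants (the paper takes $\bar d_i(2\tent)\le e^{-1}$ and $k=\lfloor\tfrac32\log n\rfloor$, you take $\bar d_i((1+\eta)\tent)\le 1/2$ and $k=\lfloor 3\log n/(1+\eta)\rfloor$), which is immaterial.
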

\begin{proof}
	Recall the subadditivity  property of the total variation distance, which implies that
	\begin{equation}
		\PP\left(\max_{i \le m}\max_{x \in V_i}\|\quench^{G_i}_x(X_{2k\tent}\in \cdot)-\pi_i\|\le {\rm e}^{-k}\right)=1-o(1)\,.
	\end{equation}
	Then, by choosing $k=\lfloor\frac32\log(n)\rfloor$,
	\begin{equation} \label{fast-mixing-new}
		\max_{i \le m}\max_{x \in V_i}
		\left\|\quench^{G_i}_x(X_{S}\in \cdot)-\pi_i\right\|_{\rm TV} = \op(n^{-1}) \,,
	\end{equation}
	and the desired result follows by Theorem \ref{thm:pi-char} and Proposition \ref{prop:concentration-degrees}.
\end{proof}
\subsection{Local neighborhoods are tree-like}\label{suse:local-neigh}
The digraphs $(G_i)_{i \le m}$ and $G$ are \whp locally tree-like 
in the following sense: the out-neighborhood $\mathcal{B}^+_{x}({4\eps\tent})$ of depth ${4\eps\tent}$ (growing with $n$) of any vertex $x \in V$, locally look like trees for small $\eps$, as the next proposition clarifies.
	\begin{lemma}\label{lemma:tree-like}
	For a digraph $S=(V,E)$, let $\trex(S):= |E|-|V|+1$ denote the tree-excess of $S$ and, for $\eps>0$,
	\begin{equation} \label{eq:G+}  
			\mathcal{G}_\eps:=\{\forall x \in V,\trex(\mathcal{B}^+_x({4\eps\tent}))< 2 \}.
	\end{equation}
	Let $(X_t)_{t \in \N}$ denote the SRW on $G_i$ or $G$ and, for $t \le {2\eps\tent}$, consider the set and related event
	\begin{align} \label{eq:Veps}
		\veps \coloneqq \left\{ x \in V \mid \trex(\mathcal{B}^+_{x}({4\eps\tent}))=0 \right\}
		, \qquad
		\mathcal{V}_{\eps}\coloneqq \{\max_{x \in V} \quenchG_x(X_t \notin \veps)\le 2^{-t}\}\,.
	\end{align}
	For a sufficiently small $\eps>0$ (independent of $n$), it holds $\p\left(\mathcal{G}_\eps \cap  \mathcal{V}_{\eps}\right) = 1-o(1)$.
\end{lemma}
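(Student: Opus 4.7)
The plan is to establish the two events $\mathcal{G}_\eps$ and $\mathcal{V}_\eps$ separately and then combine them by a union bound. Throughout I would work on the \whp event of Proposition~\ref{prop:concentration-degrees}, on which $\max_x D^+_x \le C_2 \lambda \log n$, so that the deterministic domination $|\mathcal{B}^+_x(r)| \le (C_2 \lambda \log n)^{r}$ holds for all $x$ and all $r$. For $r = 4\eps \tent$ this gives $|\mathcal{B}^+_x(r)| \le n^{4\eps(1+o(1))}$, and I will choose $\eps$ small enough -- say $\eps < 1/16$ -- so that every polynomial exponent in $n$ appearing below is strictly negative.

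For $\mathcal{G}_\eps$, a first moment argument should suffice. The tree-excess $\trex(\mathcal{B}^+_x(r))$ equals the number of ``back-edges'' discovered during a breadth-first out-exploration from $x$, i.e.\ edges pointing to an already revealed vertex. Conditionally on $|\mathcal{B}^+_x(r)| \le n^{4\eps(1+o(1))}$, this count is stochastically dominated by a binomial with mean $\lesssim n^{8\eps - 1} \log n$, so $\PP(\trex(\mathcal{B}^+_x(r)) \ge 2) \lesssim n^{16\eps - 2 + o(1)}$ via a second-moment type bound on a binomial. A union bound over the $mn$ vertices then gives $\PP(\mathcal{G}_\eps^c) \lesssim n^{16\eps - 1 + o(1)} = o(1)$.

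For $\mathcal{V}_\eps$, I would pass from an annealed estimate to a quenched one via Markov's inequality. The key observation is that on $\{X_t \notin \veps\}$ the set consisting of the walk trajectory $\{X_0, \ldots, X_t\}$ together with $\mathcal{B}^+_{X_t}(4\eps \tent)$ must contain a cycle, and for $t \le 2\eps \tent$ this set has size $\lesssim n^{4\eps(1+o(1))}$. A sequential exploration of the relevant edges -- first along the trajectory, then out from $X_t$ -- would then yield the annealed bound $\E[\quenchG_x(X_t \notin \veps)] \lesssim n^{8\eps - 1 + o(1)}$. Since $2^{2\eps \tent} = n^{O(1/\log\log n)} = n^{o(1)}$, Markov's inequality implies $\PP(\quenchG_x(X_t \notin \veps) > 2^{-t}) \lesssim 2^t \cdot n^{8\eps - 1 + o(1)}$, and a union bound over $x \in V$ and over integer times $t \le 2\eps \tent$ produces $\PP(\mathcal{V}_\eps^c) \lesssim n^{8\eps + o(1)} = o(1)$.

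The hard part will be the annealed estimate underlying the treatment of $\mathcal{V}_\eps$: one must carry out the sequential exploration of the graph along the walk and then beyond it, while keeping track of both edge types introduced by the rewiring (intra- and inter-community) and avoiding any double-counting. Because the rewiring only redirects the endpoint of an already-present edge, the total edge density is unchanged and the cycle-counting moment bounds should go through essentially as in the plain \er case, but the bookkeeping around the rewiring step requires care to ensure the polynomial-in-$n$ exponents quoted above remain valid.
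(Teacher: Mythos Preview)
Your treatment of $\mathcal{G}_\eps$ via a first-moment count of back-edges is correct and matches the standard approach. The gap is in $\mathcal{V}_\eps$: the bound you reach, $\PP(\mathcal{V}_\eps^c)\lesssim n^{8\eps+o(1)}$, \emph{diverges} for every fixed $\eps>0$, since the exponent $8\eps$ is strictly positive. After the union bound over the $mn$ starting vertices, an annealed estimate of order $n^{8\eps-1+o(1)}$ can only produce $n\cdot n^{8\eps-1+o(1)}=n^{8\eps+o(1)}\to\infty$; a first moment plus Markov's inequality is simply too crude to survive that union bound, and no admissible choice of $\eps$ rescues it.

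The argument in \cite[Lemmas~6--7]{BP} (which the paper invokes) is qualitatively different: one shows that the inequality $\quenchG_x(X_t\notin\veps)\le 2^{-t}$ holds \emph{deterministically} on the intersection of a slightly larger-radius version of $\mathcal{G}_\eps$ with the degree event of Proposition~\ref{prop:concentration-degrees}, so that no union bound over $x$ is ever paid. The mechanism is: introduce the auxiliary set $V^{\star\star}_\eps:=\{z:\trex(\mathcal{B}^+_z(6\eps\tent))=0\}$ and note that (i) once the walk enters $V^{\star\star}_\eps$ at some $s\le t\le 2\eps\tent$ it remains in $\veps$ until time $t$, because $\mathcal{B}^+_{X_t}(4\eps\tent)\subseteq\mathcal{B}^+_{X_s}(6\eps\tent)$; and (ii) on $\{\trex(\mathcal{B}^+_y(6\eps\tent+1))\le 1\ \forall y\}$ each vertex has at most one out-neighbor outside $V^{\star\star}_\eps$ (two would force tree-excess $\ge 2$), so every step enters $V^{\star\star}_\eps$ with quenched probability at least $1-1/D^+_y\ge 1/2$. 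Chaining (i) and (ii) via the Markov property gives $\quenchG_x(X_t\notin\veps)\le\quenchG_x(X_s\notin V^{\star\star}_\eps\ \forall s\le t)\le 2^{-t}$. If you insist on the annealed route, the fix would be a high-moment upgrade (take the $K$-th moment with $K\asymp\log^2 n$, as in the proof of Proposition~\ref{prop:quenched-law}), but the deterministic argument is both shorter and what the cited references actually do.
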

\begin{proof}
	The proofs for $G_i$ follow \cite[Lemmas 6 and 7]{BP} and can adapted to $G$. 
\end{proof}
This means that out-neighborhoods of depth ${4\eps\tent}$ are at most trees with an extra edge, and if they are not trees, each step of SRW (up to ${2\eps\tent}$ steps) \whp halves the quenched probability to have extra edges.

\subsection{The annealed random walk}\label{suse:annealed}
A key analysis tool is the so-called \emph{annealed random walk}. 
Let us recall its definition and outline its construction in our setting.

By annealed random walk one simply means the non-Markovian process in which the underlying random graph is constructed together with the random walk on it. 
More precisely, at time $t=0$ all vertices are \emph{unrevealed}, in the sense that its out-neighbors are unknown. Then, for any $t\ge0$, assuming $X_t=x\in V$,
\begin{enumerate}
\item If $x$ is revealed, go directly to step (3).
    \item If $x$ is unrevealed, reveal it by generating its out-neighborhood, i.e.:
    \begin{itemize}
    \item[(i)] for each vertex $y\in V_{c(x)}$ toss a coin with success probability $p$;
    \item[(ii)] for each $y\in V_{c(x)}$ which resulted in a success in (i), toss a coin with success probability $\alpha$;
    \item[(iii)] for each $y\in V_{c(x)}$ which resulted in a success in (i) and a failure in (ii), add an arrow from $x$ to $y$;
    \item[(iv)] for each $y\in V_{c(x)}$ which resulted in a success both in (i) and (ii), sample a label $i$ u.a.r.~in $\{1,\dots,m\}\setminus\{c(x)\}$ and add an arrow from $x$ to the vertex in $G_i$  corresponding to $y$ (i.e., the vertex with the same label of $y$).
  \end{itemize}
    \item Let $X_{t+1}$ be one of the out-neighbors of $x$ sampled u.a.r..
 \end{enumerate}
Clearly, such a random process will eventually stop to reveal new vertices. Moreover, the joint law of the revealed vertices coincides with the law $\PP$. The striking feature of the annealed random walk is that it coincides with the expectation of the quenched law, i.e., for any probability distribution $\mu$ on $V$ and $t\ge 0$
\begin{equation}
\pan_{\mu}\big((X_0,\dots,X_t)\in \cdot \big)=\E\big[\quench_\mu^G\big((X_0,\dots,X_t)\in\cdot\big)\big]\,.
\end{equation}
In what follows we will be interested in the event
\begin{equation}
\label{eq:C_t}
\mathcal{C}_t =\{(X_0,\dots,X_t) \text{ is cycle-free}\}\,,
\end{equation}
where by cycle-free we simply mean that the random walk does visit any vertex more than once.

\begin{lemma} \label{lemma:annealed-law}
For any $x\in V$, if $t \ll \sqrt{n}$,
\begin{equation}
	\pan_x(X_t=z,\mathcal{C}_t)=\frac{\semi^t(c(x),c(z))}{n}\left(1+O\left(\tfrac{t^2}{n}\right)\right)\,,\qquad \forall z\in V\,,
\end{equation}
where 
\begin{equation} \label{eq:Q-def}
	\semi^ t(i,j) = \frac{1+(m\one_{i=j}-1)(1-\frac{m}{m-1}\alpha)^{t}}{m}, \qquad i,j \le m, \quad t\ge0,
\end{equation}

\end{lemma}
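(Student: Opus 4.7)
The plan is to exploit two key features of the annealed construction: (a) when the walk visits distinct vertices, the out-neighborhoods along its path are generated from \emph{independent} pieces of randomness, so the annealed path probability factorizes; (b) the one-step community transition of the annealed walk is \emph{exactly} that of the mean-field chain $\semi$.

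First, by independence of revelations across distinct vertices, for any distinct $x = y_0, y_1, \dots, y_t = z$ one has
$$\pan_x\bigl(X_s = y_s \text{ for } 0 \le s \le t\bigr) = \prod_{s=0}^{t-1} \pan_{y_s}(X_1 = y_{s+1}),$$
and summing over distinct intermediate tuples $(y_1,\dots,y_{t-1})$ yields $\pan_x(X_t = z, \mathcal{C}_t)$. The single-step transition is computed by symmetry: in the revelation of $y$'s out-neighborhood, each edge is independently rewired with probability $\alpha$ and, conditional on being rewired, lands in a uniformly chosen community different from $c(y)$. Hence $\pan_y(c(X_1) = j)$ equals $1-\alpha$ if $j = c(y)$ and $\alpha/(m-1)$ otherwise, \emph{regardless} of the random out-degree $|N^+(y)|$. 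Permutation invariance within each community then gives
$$\pan_y(X_1 = w) = \frac{\semi(c(y), c(w))}{n}\bigl(1 + O(1/n)\bigr),$$
uniformly in $y \ne w$, the $O(1/n)$ coming only from the exclusion of $y$ when $c(w) = c(y)$.

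Next, one groups the sum over distinct $(y_1,\dots,y_{t-1})$ by the induced community trajectory $(c_0,\dots,c_t)$ with $c_0 = c(x)$ and $c_t = c(z)$. For each such trajectory the product above becomes $n^{-t} \prod_{s=0}^{t-1} \semi(c_s, c_{s+1}) \cdot (1 + O(t/n))$. The number of distinct vertex trajectories compatible with a fixed community trajectory equals $\prod_{j=1}^m (n - a_j)(n - a_j - 1)\cdots(n - a_j - b_j + 1)$, where $b_j$ counts intermediate positions in community $j$ and $a_j \in \{0,1,2\}$ counts endpoints lying there; using $\sum_j b_j = t - 1$ and $\sum_j b_j^2 \le (t-1)^2$, this count is $n^{t-1}(1 + O(t^2/n))$. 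Summing over community trajectories and invoking the matrix-power identity
$$\sum_{c_1,\dots,c_{t-1}} \prod_{s=0}^{t-1} \semi(c_s, c_{s+1}) = \semi^t(c(x), c(z))$$
yields the claimed formula.

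The main technical hurdle is the counting step: one needs the distinctness correction to be uniform in the community trajectory so as to factor out of the sum, and to dominate the $O(t/n)$ errors accumulated by multiplying $t$ one-step estimates. The threshold $t \ll \sqrt{n}$ is precisely the birthday-paradox scale, beyond which the cycle-free constraint becomes too restrictive to be absorbed in a multiplicative $(1 + O(t^2/n))$ correction.
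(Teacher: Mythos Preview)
Your proof is correct and follows essentially the same route as the paper: factorize the annealed path probability over distinct vertices by independence of the out-neighborhood revelations, identify the one-step weights with the entries of $Q$, and control the distinctness constraint by a $1+O(t^2/n)$ correction. The paper organizes the last step dually---extending the sum from cycle-free to unrestricted sequences rather than counting distinct tuples directly---and computes the one-step weight via $\PP(z_i\to z_{i+1})\,\E[1/D^+_{z_i}\mid z_i\to z_{i+1}]$ rather than your exchangeability argument, but the content is the same.
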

\begin{remark}
    Notice that \eqref{eq:Q-def} corresponds to the $t$-step transition probability of the Markov chain with transition matrix $Q$ as in \eqref{eq:trans-complete}. This can be verified computing the powers of the diagonal form of $Q$.
\end{remark}
\begin{proof}
Fix $z_0=x\in V$ and $z_t=z\in V$. We will say that a sequence $(z_1,\dots,z_{t-1})$ in $V$ is \emph{cycle free} if $(z_0,\dots,z_{t})$ does not contain repeated vertices.
Start by writing
\begin{equation}
		\begin{split}
			\pan_x(X_t=z , \mathcal{C}_t)&=
			\sum_{\substack{(z_1, \dots, z_{t-1}) \in V^{t-1}\\\text{cycle-free} }} 
			\E\left[\prod_{i=0}^{t-1}\frac{\one_{\{z_i \to z_{i+1}\}}}{D^+_{z_i}}\right]
			\\&
			= 
			\sum_{\substack{(z_1, \dots, z_{t-1}) \in V^{t-1}\\\text{cycle-free} }} 
			\prod_{i=0}^{t-1} \PP(z_i\to z_{i+1})\E\left[\frac{1}{{D}^+_{z_i}}\Big| \one_{\{z_i\to z_{i+1}\}}=1\right].
		\end{split}
	\end{equation}
Conditionally on $\one_{\{z_i\to z_{i+1}\}}=1$, the out-degree of $z_i$ has the same distribution as $1+D$, where $D \sim \bin(n-2,p)$. Then
\begin{equation}
\E\left[\frac{1}{{D}^+_{z_i}}\Big| \one_{\{z_i\to z_{i+1}\}}=1\right]=\frac{1}{np}\left(1+o\left(n^{-1}\right)\right)\,,
\end{equation}
	and as a consequence
	\begin{equation}
		\begin{split}
			\pan_x(X_t=z , \mathcal{C}_t)&
			= 
			\left(1+o\big(\tfrac tn\big)\right)\sum_{\substack{(z_1, \dots, z_{t-1}) \in V^{t-1}\\\text{cycle-free} }} 
			(np)^{-t}\prod_{i=0}^{t-1} \PP(z_i\to z_{i+1})\,.
		\end{split}
	\end{equation}
Moreover, $$
	\PP(z_i\to z_{i+1})=\begin{cases}
		p\frac{\alpha}{m-1}&\text{if }c(z_i)\neq c(z_{i+1})\\
		p(1-\alpha)&\text{if }c(z_i)= c(z_{i+1})
	\end{cases}\,,
	$$ and therefore
		\begin{equation*}
		\begin{split}
			\pan_x(X_t=z , \mathcal{C}_t)&
			= 
			\left(1+o\big(\tfrac tn\big)\right)\sum_{\substack{(z_1, \dots, z_{t-1}) \in V^{t-1}\\\text{cycle-free} }} 
			\frac{1}{n^t}\prod_{i=0}^{t-1} \left(\frac{\alpha}{m-1}\one_{c(z_i)\neq c(z_{i+1})}+(1-\alpha)\one_{c(z_i)= c(z_{i+1})} \right)\,.
		\end{split}
	\end{equation*}
	Since the term within the  brackets is bounded above by $1$ uniformly in $(z_i,z_{i+1})$, we could take the sum over all the possible sequences $(z_1,\dots,z_{t-1})\in V^t$, so having $n^{t-1}$ rather than $(n-2)\cdots(n-t)$ summands. Overall we get
	 \begin{equation*}
	 	\begin{split}
	 		\pan_x(X_t=z , \mathcal{C}_t)&
	 		= 
	 		\left(1+O\big(\tfrac {t^2}n\big)\right)\sum_{\substack{(z_1, \dots, z_{t-1}) \in V^{t-1} }} 
	 		\frac{1}{n^t}\prod_{i=0}^{t-1} \left(\frac{\alpha}{m-1}\one_{c(z_i)\neq c(z_{i+1})}+(1-\alpha)\one_{c(z_i)= c(z_{i+1})} \right)\,,
	 	\end{split}
	 \end{equation*}
and since $t=o(\sqrt{n})$, the prefactor does not change the first order term.
Now we notice that the sum over $V^{t-1}$ collapses into a sum over $[m]^{t-1}$. More precisely, using the shorthand notation $c_i=c(z_i)$, for $i\le t$ and defining 
$$J(c_0,\dots,c_t)=\sum_{j =0}^{t-1}\one_{c_{j+1}\neq c_j}\,,$$ we have
\begin{equation}
\pan_x(X_t=z,\mathcal{C}_t)=\left(1+O\big(\tfrac{t^2}{n}\big)\right)
\frac{1}{n} \sum_{k=0}^{t-1}\sum_{\substack{(c_1,\dots,c_{t-1})\in[m]^{t-1} \\ J(c_0,\dots,c_t)=k}}
\Big(\frac{\alpha}{m-1}\Big)^{k}
\Big(1-\alpha\Big)^{t-k}\,.
\end{equation}
Notice that 
the quantity expressed by the double sum above can be be interpreted as the probability that the Markov chain with transition matrix as in Eq.~\eqref{eq:trans-complete}
is in $c_t=c(z)$ at time $t$.
Therefore
\begin{equation}
\begin{split}
\pan_x(X_t=z,\mathcal{C}_t)
&=\frac{1}{n}\, \semi^ t(c(x),c(z))\left(1+O\left(\frac{t^2}{n}\right)\right)\,. \qedhere
\end{split}
\end{equation}
\end{proof}

Lemma \ref{lemma:annealed-law} can be strengthened to a statement valid for the quenched law of the SRW $(X_t)_{t \ge0}$, provided that we consider sufficiently small times, as it will be shown in the next subsection.
    \begin{proposition} \label{prop:quenched-law}
		For any $\alpha=\alpha_n\in[0,1]$, let $1\ll t \ll \sqrt{n}\log(n)^{-2} $. Then, 
		\begin{equation}\label{eq:315}
		\max_{i \le m}\max_{x \in V}
		\left| \quenchG_x(X_t \in V_i) - \semi^t(c(x),i)\right|
		=\op(1)\,,
		\end{equation}
		where, $\semi^t(c(x),i)$ is defined as in Eq.~\eqref{eq:Q-def}.
	\end{proposition}
		\begin{proof} 
		Notice that, since $m\asymp 1$, it suffices to prove \eqref{eq:315} for a fixed $i\le m$.
            We first consider the case in which $\alpha\ll 1$. Let $K=\lfloor \log^2 n \rfloor$ and call $\cE$ the event that all the vertices in the graph have out-degree at least $C_1\log(n)$ and $\cD$ the event in which each vertex has a tree-excess at most $1$ in its out-neighborhood of height $4$. Recall that, thanks to  Proposition \ref{prop:concentration-degrees} and Lemma \ref{lemma:tree-like}, $\PP(\cD\cap \cE)=1-o(1)$.
			Then
			\begin{equation}
			\p(\one_{\cE\cap\cD}\quenchG_x(X_t\in V_i)\ge \semi^t(c(x),i)+ \delta) \le \frac{\E[\one_{\cE\cap\cD}\quenchG_x(X_t\in V_i)^K]}{(\semi^t(c(x),i)+ \delta)^K}.
			\end{equation}
			Consider now $K$ annealed random walks $(X^{(\ell)})_{\ell\le K}$ starting at some $x\in V$. We let the walks evolve---all starting from $x$---one after the other, for a time $t$. Clearly, these walks are not independent, but each is independent of the previous ones \emph{conditionally on the environment discovered so far}.
			Let us define the family of events $(B_\ell)_{\ell \le K}$. For each $\ell \le K$, $B_\ell$ is the event that:
			\begin{enumerate}
				\item[(i)] $X^{(j)}_t \in V_i$ for each $j \le \ell$;
				\item[(ii)] all the out-degrees of the vertices visited by the first $\ell$ walks are at least $C_1\log(n)$ (cf.~Prop.~\ref{prop:concentration-degrees});
				\item[(iii)] the out-neighborhood of $x$ of height $4$ discovered by the first $\ell$ trajectories has a tree excess at most $1$.
			\end{enumerate}
			With this definitions we have
			\begin{equation}
			\E[\one_{\cE\cap\cD}\quenchG_x(X_t\in V_i)^K] \le \pan_x(B_K)=\pan_x(B_1)\prod_{{\ell}=2}^{K} \pan_x( B_{{\ell}}| B_{{\ell}-1})\,.
			\end{equation}
			We start by noting that, given $B_{{\ell}-1}$, the event $B_{{\ell}}$ is contained in the union of the following events:
			\begin{enumerate}
				\item[(1)] For all times $s\le 4$ the $\ell$-th trajectory is always in a vertex already visited by one of the previous walks.
                Since we are working on the events (ii) and (iii), for any couple $(x,y)$ there are at most $2$ paths of length $4$ joining $x$ to $y$ and each such path has a weight $\le (C_1\log(n))^{-4}$. The probability of the event described above is thus upper bounded by
        $$8K(C_1\log(n))^{-4}=o(1)\,.$$
				\item[(2)] The event in (1) does not occur, i.e., there exists some $s\le 4$ such that the walk visits an unvisited vertex, and there exists a time $s'\in(s,t]$ at which the trajectory intersects again one of the previous walks (including itself); this happens with probability less than 
                $$\frac{Kt^2}{n}=o(1)\,.$$
                \item[(3)] None of the events above occurs, and at time $s=4$ the walk is out of $V_{c(x)}$.  
                Since  $\alpha\ll 1$, this happens with probability at most
                $$1-(1-\alpha)^4=o(1)\,.$$
				\item[(4)] None of the event above occurs, yet at time $t$ the $\ell$-th trajectory is found in $V_i$; thanks to Lemma \ref{lemma:annealed-law}, this happens with probability at most
				\begin{align*}
					\max_{s\le 4}\max_{y\in V_{c(x)}}\pan_y(X_{t-s}\in V_i,\mathcal{C}_{t-s})&=\max_{s\le 4}\semi^{t-s}(c(x),i)\left(1+O\left(\frac{t^2}n\right)\right)\\
				&=\max_{\s\le 4}\semi^{t-s}(c(x),i)+o(1)\\
					&=\semi^{t}(c(x),i)+o(1)\,,
				\end{align*}
                where the first identity comes from the fact that, by symmetry, the annealed probability on the \lhs is independent of $y\in V_{c(x)}$; the second one follows from the fact that $\semi^t(c(x),i)\le 1$ and $t^2\ll n$;
                the third one uses that $t\gg 1$. 
			\end{enumerate}
			In conclusion,  uniformly over ${\ell}\le K$ we have $\pan_x(B_{\ell}|B_{{\ell}-1})=\semi^t(c(x),i)+o(1)$.
			Thanks to the choice of $K$, we conclude that, being $\delta>0$ fixed,
			\begin{equation}
	\p(\one_{\cE\cap\cD}\quenchG_x(X_t\in V_i)\ge\semi^t(c(x),i)+ \delta)
			\le \left(\frac{\semi^t(c(x),i)+o(1)}{\semi^t(c(x),i)+\delta}\right)^K 
			=o(n^{-1}).
			\end{equation}
			Therefore
   			\begin{equation}\label{eq:319}
            \begin{split}
				&\p\big(\max_{x \in V}\quenchG_x(X_t\in V_i)\ge \semi^t(c(x),i)+\delta\big)
				\le \\
                &\qquad\qquad \qquad\p(\one_{\cE\cap\cD}\max_{x \in V}\quenchG_x(X_t\in V_i)\ge \semi^t(c(x),i)+ \delta)+\p(\cE\cup\cD)=o(1)\,.
                \end{split}
			\end{equation}
            			To prove a uniform lower bound on $\quenchG_x(X_t\in V_i)$, one can consider the events
            \begin{equation}
        \bar{\mathcal{E}}_{x,i,\delta}=\left\{ \quenchG_x(X_t\in V_i)\le\semi^t(c(x),i)- \delta\right\}
            \end{equation}
            and
\begin{equation}
\hat{\mathcal{E}}_{x,j,\delta}=\left\{\quenchG_x(X_t\in V_j)\ge\semi^t(c(x),j)+   \frac{\delta}{m-1}\right\}\,.
            \end{equation}
Clearly, for any $i\le m$, $x\in V_i$ and $\delta>0$,  $\cup_{j\neq i}\hat{\mathcal{E}}_{x,j,\delta}\supseteq\bar{\mathcal{E}}_{x,i,\delta} $.
Therefore,
\begin{equation}\label{eq:322}
    \begin{split}
        \p\big(\min_{x \in V}\quenchG_x(X_t\in V_i)\le \semi^t(c(x),i)-\delta\big)&= \p\big(\cup_{x\in V}\bar{\mathcal{E}}_{x,i,\delta}\big)\\
        &\le\p\big(\cup_{j\neq i}\cup_{x\in V}\hat{\mathcal{E}}_{x,j,\delta}\big)\\
        &\le m\max_{j\le m}\p\big(\cup_{x\in V}\hat{\mathcal{E}}_{x,j,\delta}\big)\,.
    \end{split}
\end{equation}
Since $m\asymp 1$ and since the probability on the \rhs of \eqref{eq:322} coincides, replacing $\delta$ with $\frac{\delta}{m-1}$, with the one on the \lhs of \eqref{eq:319}, we conclude that
\begin{equation}\label{eq:323}
    \begin{split}
        \p\big(\min_{x \in V}\quenchG_x(X_t\in V_i)\le \semi^t(c(x),i)-\delta\big)&= o(1)\,.
    \end{split}
\end{equation}
Therefore, in the case $\alpha\ll1$, \eqref{eq:315} follows from \eqref{eq:319} and \eqref{eq:323} and a union bound over $i\le m$.

			 We are left to consider the case $\alpha\asymp 1$. In this case, $Q^t(j,i)=\frac1m+o(1)$ for any $t\gg1$. We can argue as above, but rather than the events (1), (2), (3) and (4), we can consider the events (1), (2) and (4'), where
			 \begin{itemize}
			 	\item[(4')] The events (1) and (2) do not occur, yet at time $t$ the $j$-th trajectory is found in $V_i$; thanks to Lemma \ref{lemma:annealed-law}, this happens with probability at most
			 	\begin{align*}
			 		\max_{s\le 4}\max_{y\in V}\pan_y(X_{t}\in V_i,\mathcal{C}_{t})&=\max_{s\le 4}\max_{j \le m}\semi^{t-s}(j,i)\left(1+O\left(\frac{t^2}n\right)\right)\\
			 		&=\frac1m+o(1)\,.
			 	\end{align*}
			 \end{itemize}
			 This completes the proof.
		\end{proof}

\section{Weakly supercritical regime}\label{sec:super-weak}

In this section and in the following one, we approach the regime $\alpha^{-1} \gg \tent$.

\subsection{First jump across two communities}
We now consider the first time at which the random walk traverses a rewired edge, that is
\begin{equation}\label{eq:def-tau-jump}
	\tauj=\min\{t>0 : c(X_t)\neq c(X_{t-1})\}\,.
\end{equation}
Letting $z_0=x$, and arguing as in the proof of Lemma \ref{lemma:annealed-law} we obtain
\begin{equation} \label{eq:jump-C}
\begin{split}
    \E[\quenchG_x(\tauj > t, \mathcal{C}_t)]
    &=\sum_{\substack{(z_1, \dots, z_{t}) \in V_{c(x)}^{t}\\\text{cycle-free} }} \prod_{i=0}^{t-1} \PP(z_i\to z_{i+1})\E\left[\frac{1}{{D}^+_{z_i}}\Big| \one_{\{z_i\to z_{i+1}\}}=1\right]\\
    &
    =(1-\alpha)^t\left(1+O\big(\tfrac{t^2}n\big)\right).\\    
\end{split}
\end{equation}
We now show that, if $t$ is (twice) the entropic time then, w.h.p.---and uniformly over the starting position---the quenched probability to see a jump to another community before $t$ is small. Before stating the proposition, we need a preliminary lemma that serves as a bootstrap for the forthcoming Proposition \ref{prop:jump-time}.
\begin{lemma}\label{lemma:prelim} If $\alpha\ll 1$, for any constant $a\in\N$,
	\begin{equation}
	\max_{x \in V}\quenchG_x(\tauj \le a) = \op(1)\,.
	\end{equation}
\end{lemma}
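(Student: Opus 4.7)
The plan is to reduce the quenched probability of an early community jump to a purely structural quantity --- the maximal fraction of rewired out-edges across the graph --- and then control that quantity via the degree concentration of Proposition \ref{prop:concentration-degrees} together with a binomial tail estimate that exploits $\alpha\to 0$.

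\textbf{Step 1: reduction via a union bound.} From the definition \eqref{eq:def-tau-jump} of $\tauj$, applying a union bound over the first $a$ steps and conditioning on $X_{t-1}$,
\begin{equation*}
\quenchG_x(\tauj \le a) \;\le\; \sum_{t=1}^{a} \quenchG_x(c(X_t)\ne c(X_{t-1})) \;=\; \sum_{t=1}^{a}\sum_{y \in V} \quenchG_x(X_{t-1}=y)\,\frac{O^+_y}{D^+_y} \;\le\; a\,M,
\end{equation*}
where $M \coloneqq \max_{y \in V} O^+_y/D^+_y$ is measurable with respect to the environment and independent of $x$. Since $a$ is a fixed constant, it suffices to prove that $M=\op(1)$.

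\textbf{Step 2: controlling $M$.} By Proposition \ref{prop:concentration-degrees}, \whp $C_1 \lambda \log n \le D^+_y \le C_2\lambda \log n$ for every $y \in V$, so on this event $M \le (C_1\lambda\log n)^{-1}\max_y O^+_y$, and it is enough to show $\max_y O^+_y = o(\log n)$ \whp. Conditionally on $\{D^+_y\}_{y\in V}$, the variables $O^+_y$ are independent with $O^+_y \sim \bin(D^+_y,\alpha)$. Using the standard bound $\binom{N}{k}\le (eN/k)^k$, for any integer $k\ge 1$ and on the degree event above,
\begin{equation*}
\PP(O^+_y \ge k \mid D^+_y) \;\le\; \binom{\lfloor C_2\lambda\log n\rfloor}{k}\alpha^k \;\le\; \left(\tfrac{eC_2\lambda \alpha \log n}{k}\right)^{k}.
\end{equation*}
Fix any $\delta>0$ and set $k=\lceil \delta \log n\rceil$. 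Because $\alpha\to 0$, for all $n$ large enough one has $eC_2\lambda \alpha/\delta \le e^{-2/\delta}$, so the right-hand side is bounded by $n^{-2}$. A union bound over the $mn \asymp n$ vertices then gives $\PP(\max_y O^+_y \ge \delta \log n) = o(1)$, and since $\delta$ is arbitrary, $\max_y O^+_y = \op(\log n)$. Combining with Step 1 yields $\max_{x \in V}\quenchG_x(\tauj \le a) \le aM = \op(1)$.

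\textbf{Main obstacle.} The only mildly delicate point is that the tail estimate must be robust to the rate at which $\alpha$ vanishes. Near the supercritical threshold one has $\alpha^{-1}\asymp \tent$, which means $\alpha\log n\asymp \log\log n\to\infty$, so the mean $\E[O^+_y]$ diverges and the naive bound ``$\max_y O^+_y$ is of order $\E[O^+_y]$'' is meaningless. The choice $k=\lceil \delta\log n\rceil$ circumvents this: the threshold grows much faster than the mean, and the factor $(eC_2\lambda\alpha/\delta)^{\delta\log n}$ produces a negative power of $n$ that absorbs the $n$ factor coming from the union bound, uniformly in the subsequence of $\alpha$ chosen.
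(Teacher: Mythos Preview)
Your proof is correct and follows essentially the same route as the paper: reduce via a union bound to showing $\max_{y\in V} O_y^+/D_y^+ = \op(1)$, then control the tail of $O_y^+$ above a threshold of order $\varepsilon\log n$ and take a union bound over vertices. The only cosmetic difference is that the paper invokes Bennett's inequality to get $-\log\PP(O_x^+\ge \varepsilon C_1\log n)\gtrsim \varepsilon\log(\varepsilon\alpha^{-1})\log n$, whereas you use the elementary estimate $\PP(\bin(N,\alpha)\ge k)\le\binom{N}{k}\alpha^k\le(eN\alpha/k)^k$; both produce tail bounds of the same order and beat the union bound uniformly in $\alpha=o(1)$.
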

\begin{proof}
    Let $\cE$ again denote the event that all the vertices in the graph have out-degree at least $C_1\log(n)$.
	It is enough to prove that
	\begin{equation}\label{eq:ben}
		\max_{x\in V}\frac{O_x^+}{D_x^+}=\op(1)\,.
	\end{equation}
	To see the validity of the estimate in the latter display, we use Bennett's inequality, which gives, for any fixed $\varepsilon>0$
	\begin{equation}
		-\log \PP(O_x^+\ge \varepsilon D_x^+\,,\,\mathcal{E})
        \ge
        -\log \PP(O_x^+\ge \varepsilon C_1 \log(n))
        \gtrsim \varepsilon\log(\varepsilon\alpha^{-1})\log(n)\,.
	\end{equation}
Then \eqref{eq:ben} follows by Proposition \ref{prop:concentration-degrees}, the fact that $\log(\varepsilon\alpha^{-1})\gg 1$, and a union bound.
\end{proof}
\begin{proposition} \label{prop:jump-time}
Let $\alpha^{-1}\gg \tent$. Then for $T \ll \min\{\alpha^{-1}, \sqrt{n}(\log(n))^{-2}\}$,
\begin{equation}\label{eq:prop-jump-time}
    \max_{x \in V}\quenchG_x(\tauj \le  T) = \op(1)\,.
\end{equation}
\end{proposition}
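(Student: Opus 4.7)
The plan is to upgrade the annealed estimate in \eqref{eq:jump-C} to a quenched uniform statement via a high-moment argument, following closely the blueprint of the proof of Proposition \ref{prop:quenched-law}. Writing $p_x := \quenchG_x(\tauj \le T)$ and observing that \eqref{eq:jump-C}, together with $T \ll \alpha^{-1}$ and $T \ll \sqrt{n}$, immediately yields
\begin{equation*}
\E[p_x] \;=\; \pan_x(\tauj \le T) \;\le\; 1 - \pan_x(\tauj > T, \mathcal{C}_T) \;=\; 1 - (1-\alpha)^T\bigl(1 + o(1)\bigr) \;=\; o(1),
\end{equation*}
a naive Markov-plus-union-bound over the $mn$ vertices is too weak to give the uniform statement. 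I would therefore bound $\E[p_x^K]$ for a slowly diverging $K = K(n)$, interpreting it as the annealed probability that $K$ independent walks $(X^{(j)})_{j \le K}$ launched from $x$ on a common realisation of $G$ all perform an inter-community jump by time $T$.

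Letting $\mathcal{F}_{j-1}$ denote the $\sigma$-algebra generated by walks $1, \ldots, j-1$ together with the part of $G$ they have revealed, I would then telescope
\begin{equation*}
\pan_x\Bigl(\bigcap_{j=1}^K \bigl\{\tauj^{(j)} \le T\bigr\}\Bigr) \;=\; \prod_{j=1}^K \pan_x\bigl(\tauj^{(j)} \le T \,\big|\, \tauj^{(i)} \le T\ \forall i<j\bigr),
\end{equation*}
and aim to bound each factor by some $c = o(1)$ uniformly in $j$ and in the realisation of $\mathcal{F}_{j-1}$. Intersecting with the high-probability events $\cE$ and $\cD$ of Proposition \ref{prop:concentration-degrees} and Lemma \ref{lemma:tree-like}, this reduces, in the same spirit as in the proof of Proposition \ref{prop:quenched-law}, to splitting the trajectory of walk $j$ into sub-events: (a) it quickly re-intersects the subgraph revealed by walks $1,\ldots,j-1$ within the first few steps, which by the tree-excess bound combined with the degree lower bound has probability $O(K/\log^4 n) = o(1)$; (b) it re-intersects at a later time $\ell \in (4, T]$, which by a birthday-type estimate has probability $O(KT^2/n) = o(1)$ in view of $T \ll \sqrt{n}/\log^2 n$ and $K = O(\log^{2} n)$; and (c) none of the above, in which case walk $j$ explores a fresh portion of $G$ and, by \eqref{eq:jump-C} applied to walk $j$ alone, $\pan_x(\tauj^{(j)} \le T \mid \mathcal{F}_{j-1}) \le \pan_x(\tauj \le T) + o(1) = o(1)$.

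Summing these three contributions gives a uniform bound $c = o(1)$, whence $\E[p_x^K] \le c^K + \PP\bigl((\cE \cap \cD)^c\bigr)$. Choosing $K = K(n) \to \infty$ slowly so that $mn\,(c/\delta)^K \to 0$ for any fixed $\delta > 0$, and noting that standard Chernoff-type refinements of Proposition \ref{prop:concentration-degrees} and Lemma \ref{lemma:tree-like} allow $\PP\bigl((\cE\cap\cD)^c\bigr)$ to be made polynomially small in $n$, Markov's inequality and a union bound over $x \in V$ then give $\PP\bigl(\max_{x\in V} p_x > \delta\bigr) \to 0$ as $n \to \infty$, as desired. The main technical obstacle is regime (c): transferring the annealed estimate \eqref{eq:jump-C} to the conditional law of walk $j$ given $\mathcal{F}_{j-1}$ requires verifying that the already-revealed graph biases the fresh exploration of the $j$-th walk only by a $o(1)$ amount, which is where the tree-like geometry of Lemma \ref{lemma:tree-like} is essential.
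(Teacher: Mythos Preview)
Your proposal follows essentially the same high-moment strategy as the paper's proof: interpret $\E[\one_{\cE\cap\cD}\,p_x^K]$ as the probability that $K$ annealed walks from $x$ all jump by time $T$, and bound each conditional factor by decomposing into early re-intersection, late re-intersection, and fresh exploration. Two points of difference deserve comment.

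First, the paper isolates one sub-event you have not: the $j$-th walk crosses a rewired edge within the first few steps (at some $\ell\le4$). This is not contained in your (a) or (b), and your (c) does not automatically absorb it either, because the out-edges of $x$ are already revealed by $\mathcal{F}_{j-1}$; hence the conditional probability of a jump at step $1$ is $O_x^+/D_x^+$, a quantity that is \emph{not} controlled by $\cE$ and $\cD$. The paper disposes of this case via Lemma~\ref{lemma:prelim} ($\max_x O_x^+/D_x^+ = \op(1)$ when $\alpha\ll1$). Your closing remark that the tree-like geometry of Lemma~\ref{lemma:tree-like} is what makes (c) go through misses this extra ingredient.

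Second, the paper keeps the indicator $\one_{\cE\cap\cD}$ \emph{inside} the $K$-th moment (by building the degree and tree-excess constraints into the events $B_j$) and adds $\PP((\cE\cap\cD)^\complement)$ only \emph{after} the union bound, so that $\PP((\cE\cap\cD)^\complement)=o(1)$ suffices. Your additive split $\E[p_x^K]\le c^K+\PP((\cE\cap\cD)^\complement)$, followed by Markov and a union bound over $mn$ vertices, would instead require $mn\cdot\PP((\cE\cap\cD)^\complement)/\delta^K\to0$. With $K=\lfloor\log^2 n\rfloor$ (the paper's choice, needed so that $mn\,(c/\delta)^K\to0$) this demands $\PP((\cE\cap\cD)^\complement)$ to be super-polynomially small, not merely polynomially small; conversely, taking $K$ small enough for polynomial control of $(\cE\cap\cD)^\complement$ may fail to kill the factor $mn$. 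The paper's ordering of operations sidesteps this tension entirely.
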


\begin{proof} 
We proceed by the same line of argument as in the proof of Proposition \ref{prop:quenched-law}. Notice that in this case there is no loss of generality in assuming $T\gg 1$, since the (random) map $T\mapsto \quenchG_x(\tauj\le T)$ is deterministically increasing for any choice of $x\in V$. Let $K=\lfloor \log^2 n \rfloor$ and call $\cE$ the event that all the vertices in the graph have out-degree at least $C_1\log(n)$ and $\cD$ the event in which each vertex has a tree-excess at most $1$ in its out-neighborhood of height $4$. Then
	\begin{equation}
	\p(\one_{\cE\cap \cD}\quenchG_x(\tauj\le T)\ge \delta) \le \frac{\E[\one_{\cE\cap \cD}\quenchG_x(\tauj\le T)^K]}{\delta^K}.
	\end{equation}
		Consider now $K$ annealed random walks $(X^{(\ell)})_{\ell\le K}$ starting at some $x\in V$.
		Let us define the family of events $(B_{\ell})_{\ell \le K}$. For each $\ell \le K$, $B_{\ell}$ is the event that:
		\begin{enumerate}
			\item[(i)] $\inf\{s\ge 1\mid c(X^{(j)}_s)\neq c(x) \}\le T$, for all $j\le \ell$;
			\item[(ii)] all the out-degrees of the vertices visited by the first $\ell$ walks are at least $C_1\log(n)$;
			\item[(iii)] the out-neighborhood of $x$ of height $4$ discovered by the first $\ell$ trajectories has a tree excess at most $1$.
	\end{enumerate} 
With this definitions we have
	\begin{equation}
		\E[\one_{\cE\cap \cD}\quenchG_x(\tauj\le T)^K] \le \pan_x(B_K)=\pan_x(B_1)\prod_{{\ell}=2}^{K} \pan_x( B_{{\ell}}| B_{{\ell}-1})\,.
	\end{equation}
	Let us start by noting that
	given $B_{{\ell}-1}$, the event $B_{\ell}$ is contained in the union of the following four events:
	\begin{enumerate}
		\item[(0)] The ${\ell}$-th trajectory jumps to some $y\in V$ such that $c(y)\neq c(x)$ at some time $s\le 4$ (the probability of this event is $o(1)$ thanks to Lemma \ref{lemma:prelim});
		\item[(1)] For all $s\le 4$ the ${\ell}$-th trajectory is always in a vertex already visited by one of the previous walks; thanks to (iii), for any $y$ there are at most $2$ paths of length $4$ joining $x$ to $y$, moreover, thanks to (ii), this happens with probability at most $8K(C_1\log(n))^{-4}=o(1)$;
		\item[(2)] There exists some $s\le 4$ such that the walk visits an unvisited vertex, and there exists a time $s'\in(s,T]$ at which it
             intersects again one of the previous walks (including itself); this happens with probability less than $\frac{KT^2}{n}=o(1)$;
		\item[(3)] None of the event above is verified, yet before time $T$ the ${\ell}$-th trajectory jumps to some $y\in V$ such that $c(y)\neq c(z)$; thanks to Eq.~\eqref{eq:jump-C}, this happens with probability at most $$\pan_x(\tauj\le T ,\mathcal{C}_{T})(1+o(1))=o(1)\,.$$
	\end{enumerate}
	In conclusion, we have $\pan_x(B_{\ell}|B_{{\ell}-1})=o(1)$. Thanks to the choice of $K$, we conclude, for $\delta>0$ fixed, 
	\begin{equation}
	\p(\one_{\cE\cap \cD}\quenchG_x(\tauj\le T)\ge \delta)
	\le \left(\frac{o(1)}{\delta}\right)^K 
	=o\left(n^{-c}\right)\,,\qquad\forall c>0\,.
	\end{equation}
	Therefore
	\begin{equation}
	\p(\max_{x \in V}\quenchG_x(\tauj\le T)\ge \delta)
	\le \p(\one_{\cE\cap \cD}\max_{x \in V}\quenchG_x(\tauj\le T)\ge \delta)+o(1)=o(1)\,.
	\end{equation}
\end{proof}

\subsection{Local equilibrium: a first timescale}
In what follows we will sometimes commit a slight abuse of notation by lifting $\pi_i$ to a probability measure on the entire vertex set $V$.
\begin{theorem} \label{thm:cutoff-community}
	Let $ \alpha^{-1}\gg  \tent$. Then, for any fixed $\varepsilon>0$ and $T \ll \min\{\alpha^{-1}, \sqrt{n}(\log(n))^{-2}\}$,
	\begin{equation}
	\max_{i \le m}\max_{x \in V_i}\max_{t\in[(1+\varepsilon)\tent,T]}\|\quenchG_x(X_{t}\in\cdot)-\pi_i\|_{\rm TV}=\op(1)\,.
	\end{equation}
\end{theorem}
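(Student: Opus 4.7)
The plan is to reduce the statement to the local cutoff result of Theorem \ref{thm:cutoff} by exploiting the fact that, in the weakly supercritical regime, up to time $T\ll \alpha^{-1}$ the walk on $G$ started in $V_i$ has, with high quenched probability, not yet traversed a rewired edge. The two main inputs are Proposition \ref{prop:jump-time}, which controls $\tauj$ uniformly in $x$, and the cutoff statement of Theorem \ref{thm:cutoff} for the SRW on each community graph $G_i$.

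I would begin by constructing a joint coupling of the SRW $X$ on $G$ and the SRW $Y$ on $G_i$, both started at $x\in V_i$. Using the natural identification between out-neighbors of $y\in V_i$ in $G_i$ and in $G$ (recall that rewiring preserves out-degrees: the $k$-th out-neighbor of $y$ in $G_i$ corresponds to the $k$-th out-neighbor of $y$ in $G$, which is either the same vertex, if the edge was not rewired, or its rewired target), pick at each step a common uniform index $k\in\{1,\dots,D_y^+\}$ and let both walks move to the respective $k$-th out-neighbor. Once the two walks separate we let them evolve independently. Under this coupling, as long as no rewired edge has been used we have $X_s=Y_s$, and therefore $\{X_t\ne Y_t\}\subseteq\{\tauj\le t\}\subseteq\{\tauj\le T\}$. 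Hence the coupling probability of disagreement is bounded, uniformly in $t\le T$, by $\quenchG_x(\tauj\le T)$, which is $\op(1)$ uniformly in $x$ by Proposition \ref{prop:jump-time}.

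The bound is then completed by triangulation:
\[
\|\quenchG_x(X_t\in\cdot)-\pi_i\|_{\rm TV}\;\le\;\PP_{\text{coup}}(X_t\ne Y_t)\;+\;\|\quench^{G_i}_x(Y_t\in\cdot)-\pi_i\|_{\rm TV}.
\]
The first summand has just been handled. For the second, Theorem \ref{thm:cutoff} applied with $\beta=1+\varepsilon$ gives $\max_{x\in V_i}\|\quench^{G_i}_x(Y_{(1+\varepsilon)\tent}\in\cdot)-\pi_i\|_{\rm TV}=\op(1)$, and the standard monotonicity of $t\mapsto \|\quench^{G_i}_x(Y_t\in\cdot)-\pi_i\|_{\rm TV}$ under the $G_i$-semigroup propagates this bound to every $t\ge(1+\varepsilon)\tent$. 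Taking the max over $i$, $x$ and $t\in[(1+\varepsilon)\tent,T]$ concludes.

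The conceptually delicate point is not the coupling itself, which is rather natural once one realises that $G$ and $G_i$ share an indexed family of out-edges, but rather the simultaneous uniformity over the starting vertex $x$, the time $t$ in the whole window $[(1+\varepsilon)\tent,T]$, and the graph environment. The first and third of these come for free from Proposition \ref{prop:jump-time} and Theorem \ref{thm:cutoff}, which are already stated in the required uniform form; the second is the only genuinely new ingredient and is handled by the monotonicity of the local TV profile. No finer control of $\pi$ or of inter-community interactions is required, since at times $t\le T\ll\alpha^{-1}$ the walk effectively never sees the block structure.
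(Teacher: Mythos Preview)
Your proof is correct and follows essentially the same approach as the paper: the paper also triangulates through $\quench^{G_i}_x(X_t\in\cdot)$, bounds the first term by $\quenchG_x(\tauj\le T)$ via the natural coupling (which it only names, while you spell it out), and handles the second term by Theorem~\ref{thm:cutoff} together with monotonicity of the local TV distance in $t$. The only cosmetic difference is that the paper writes the coupling bound as $\|\quenchG_x(X_t\in\cdot)-\quench^{G_i}_x(X_t\in\cdot)\|_{\rm TV}\le \quenchG_x(\tauj\le T)$ directly, whereas you phrase it through the coupling inequality $\|\mu-\nu\|_{\rm TV}\le\PP_{\rm coup}(X\ne Y)$.
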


\begin{proof}
	Fix $i \le m$, and notice that there is no loss of generality in assuming $\varepsilon\in(0,1]$. Let $\mathcal{J}=[(1+\varepsilon)\tent,T]$. By the triangle inequality
	\begin{equation} \begin{split}
		\max_{t\in\mathcal{J}}&\max_{x\in V_i}\|\quenchG_x(X_{t} \in \cdot)- \pi_i\|_{\rm TV}\\
		&\le	\max_{t\in\mathcal{J}}\max_{x\in V_i}\|\quenchG_x(X_{t} \in \cdot)- \quench^{G_i}_x(X_{t} \in \cdot)\|_{\rm TV}+\max_{t\in\mathcal{J}}\max_{x\in V_i}\|\quench^{G_i}_x(X_{t} \in \cdot)-\pi_i\|_{\rm TV}\\
		&\le \max_{x\in V_i}\quenchG_x(\tauj\le T)+	\max_{x\in V_i}\|\quench^{G_i}_x(X_{(1+\varepsilon)\tent} \in \cdot)-\pi_i\|_{\rm TV}\,,
	\end{split}
	\end{equation}
	where the second inequality can be deduced by coupling the random walk on $(G_i)_{1\le i \le m}$ and $G$ in the natural way.
	Thanks to Proposition \ref{prop:jump-time} and Theorem \ref{thm:cutoff}, respectively, we may maximize the two terms on the \rhs over $i \le m$ and $x \in V_i$ and get the desired upper bound.
\end{proof}
 In conclusion, Theorem \ref{thm:cutoff-community} shows that for $T \ll \min\{\alpha^{-1},  {n}^{ \frac12 }{\log(n)^{-2}} \}$ the dynamics is trapped in the local equilibrium corresponding to the starting community. 
\subsection{Global equilibrium: a second timescale}\label{suse:global-weak}
In the section, we are going to show that each $T\gg\alpha^{-1}$
provides an upper bound on the mixing time of the SRW on our digraph. 
The claim will be divided into two parts
depending on the value of $\alpha$. In particular, here we will focus on the window $\tent\ll\alpha^{-1}\ll \sqrt{n}\log(n)^{-2}$,
while we postpone to Section \ref{sec:super-strong} the discussion of the strongly supercritical regime, where $\sqrt{n}\log(n)^{-2}\lesssim\alpha^{-1}\ll n\lambda \log(n) $.
As it will be clear along the proofs, the two regimes require different tools and techniques.

	\begin{theorem} \label{thm:pi-approximation}
		Let $\alpha$ be such that $\tent\ll \alpha^{-1}\ll\sqrt{n}{\log(n)^{-2}} $. If $T$ is such that $T \gg \alpha^{-1}$, then
		\begin{equation}
		\max_{x\in V}\left\|\quenchG_x(X_{T}\in\cdot)-\pi\right\|_{\rm TV}=\op(1)\,.
			\end{equation}
		Moreover, it holds $\left\|\frac1m\sum_{i=1}^m{\pi_i}-\pi\right\|_{\rm TV}=\op(1)\,$.
	\end{theorem}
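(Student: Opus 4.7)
The plan is to decompose $T = t_1 + t_2$ with $t_1 \gg \alpha^{-1}$, used to spread mass uniformly across the $m$ communities via the mean-field chain with transition matrix $\semi$ from Eq.~\eqref{eq:trans-complete}, and $t_2 \coloneqq (1+\eps)\tent$, used to re-equilibrate within the terminal community. Exploiting the monotonicity of $t \mapsto \|\quenchG_x(X_t \in \cdot) - \pi\|_{\rm TV}$ along a Markov chain with unique stationary distribution (and $\pi$ is w.h.p.\ unique by strong connectivity of $G$), it suffices to prove the claim for a single well-chosen $T = T_n$ with $\alpha^{-1} \ll T_n \ll \sqrt{n}\log(n)^{-2}$; such a $T_n$ exists by the assumption $\alpha^{-1} \ll \sqrt{n}\log(n)^{-2}$ (e.g.\ $T_n = \alpha^{-1}\sqrt{\log\log n}$).

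Conditioning on $X_{t_1}$ via the Markov property yields
\begin{equation*}
\quenchG_x(X_T \in A) = \sum_{j=1}^m \sum_{y \in V_j} \quenchG_x(X_{t_1} = y)\, \quenchG_y(X_{t_2} \in A).
\end{equation*}
Since $t_2 = (1+\eps)\tent \ll \min\{\alpha^{-1}, \sqrt n\, \log(n)^{-2}\}$, Theorem \ref{thm:cutoff-community} replaces the inner factor by $\pi_{c(y)}(A)$ up to a $\op(1)$ error uniform in $y$ and $A$. Collecting mass by community yields
\begin{equation*}
\quenchG_x(X_T \in A) = \sum_{j=1}^m \quenchG_x(X_{t_1} \in V_j)\, \pi_j(A) + \op(1),
\end{equation*}
uniformly in $A \subset V$. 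Proposition \ref{prop:quenched-law} at time $t_1 \ll \sqrt{n}\log(n)^{-2}$ then replaces $\quenchG_x(X_{t_1} \in V_j)$ by $\semi^{t_1}(c(x), j)$ up to $\op(1)$; since by Eq.~\eqref{eq:Q-def} the quantity $\semi^{t}(i,j) - \frac{1}{m}$ decays like $(1-\frac{m}{m-1}\alpha)^t$, the condition $t_1 \gg \alpha^{-1}$ forces $\semi^{t_1}(c(x), j) = 1/m + o(1)$ uniformly. Chaining these three reductions yields $\|\quenchG_x(X_T \in \cdot) - \nu\|_{\rm TV} = \op(1)$ uniformly in $x$, with $\nu \coloneqq \frac{1}{m}\sum_{i=1}^m \pi_i$.

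The second statement then comes for free from stationarity: writing $\pi = \sum_{x} \pi(x)\, \quenchG_x(X_T \in \cdot)$ and applying the triangle inequality gives $\|\pi - \nu\|_{\rm TV} \le \max_{x \in V} \|\quenchG_x(X_T \in \cdot) - \nu\|_{\rm TV} = \op(1)$, and one more triangle inequality upgrades the approximation by $\nu$ into the desired approximation by $\pi$.

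The main obstacle is tuning $t_1$ so that all the ingredients fire simultaneously: Proposition \ref{prop:quenched-law} requires $t_1 \ll \sqrt{n}\log(n)^{-2}$ for the annealed-to-quenched passage to hold, while the spectral decay of $\semi$ requires $t_1 \gg \alpha^{-1}$. The weakly supercritical hypothesis $\alpha^{-1} \ll \sqrt{n}\log(n)^{-2}$ is precisely what carves out a non-empty admissible window; outside of it the annealed input degrades, which is exactly why Section \ref{sec:super-strong} will have to replace the argument with a direct homogenization of the quenched environment.
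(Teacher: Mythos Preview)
Your argument is correct and follows essentially the same route as the paper: both split the time as ``long segment to equidistribute across communities via Proposition~\ref{prop:quenched-law}'' followed by ``short segment of order $\tent$ to relax inside the terminal community via Theorem~\ref{thm:cutoff-community}'', then deduce the claim on $\pi$ by averaging the uniform-in-$x$ bound against $\pi$. The only cosmetic difference is that the paper writes the decomposition as $T+2\tent$ rather than $t_1+(1+\varepsilon)\tent$, and carries the community splitting through the triangle inequality slightly differently. One small caveat: your illustrative choice $T_n=\alpha^{-1}\sqrt{\log\log n}$ need not satisfy $T_n\ll\sqrt{n}\log(n)^{-2}$ when $\alpha^{-1}$ is close to that upper threshold; any sequence strictly between the two scales (e.g.\ the geometric mean) works, so this does not affect the argument.
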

	
	\begin{proof}
Start by fixing $T \ll {n}^{ \frac12 }{\log(n)^{-2}}$. The thesis will hold for general $T$ by monotonicity.
Notice also that there is no loss of generality in replacing $T$ by $T+2\tent$. We use the deterministic bound
		\begin{equation}\label{eq:3}
		\begin{split}
			\left\|\quenchG_x(X_{T+{2\tent}}\in\cdot)-\frac 1m\sum_{i=1}^m  \pi_i\right\|_{\rm TV}&=\left\|\sum_{y \in V}P^T(x,y)P^{2\tent}(y,\cdot)-\frac 1m\sum_{i=1}^m  \pi_i\right\|_{\rm TV} \\
			&=  \left\|\sum_{i=1}^m\left(\sum_{y \in V_i}P^T(x,y)P^{2\tent}(y,\cdot)-\frac{\pi_i}{m}\right)\right\|_{\rm TV}\\
			&\le  \sum_{i=1}^m \left\|\sum_{y \in V_i}P^T(x,y)P^{2\tent}(y,\cdot)-\frac{\pi_i}{m}\right\|_{\rm TV}\,.
		\end{split}
		\end{equation}
		Notice that, thanks to Proposition \ref{prop:quenched-law}, we can bound
		\begin{equation} \label{1/m}
		\max_{i \le m}\max_{x \in V}\left|\quenchG_x(X_T \in V_i)-\frac{1}{m}\right|=\op(1).
		\end{equation}
	Let us fix $i \le m$ and focus on the total variation distance on the \rhs of \eqref{eq:3}. We have
		\begin{equation}
		\begin{split}\label{eq:4}
		&		\max_{i \le m}\max_{x \in V}\left\|\sum_{y \in V_i} P^T(x,y)P^{2\tent}(y,\cdot)-\frac{\pi_i}{m}\right\|_{\rm TV} \\
			&\le 	\max_{i \le m}\max_{x \in V}\left\|\sum_{y \in V_i}P^T(x,y)\left(P^{2\tent}(y,\cdot)-\pi_i\right)\right\|_{\rm TV}+ 	\max_{i \le m}\max_{x \in V}\Big|\quenchG_x(X_T \in V_i)-\frac{1}{m}\Big|\\
			& \le \max_{i \le m}\max_{x \in V}\sum_{y \in V_i}P^T(x,y)\left\|P^{2\tent}(y,\cdot)-\pi_i\right\|_{\rm TV}+\op(1)\\
			& \le	\max_{i \le m}\max_{y\in V_i} \left\|P^{2\tent}(y,\cdot)-\pi_i\right\|_{\rm TV}+ \op(1)=\op(1) \,,
		\end{split}
		\end{equation}
        	where both in the second and in the third inequalities we used \eqref{1/m}, and the last asymptotic bound follows from Theorem \ref{thm:cutoff-community}.  Plugging \eqref{eq:4} into \eqref{eq:3} we deduce that
		 $$\max_{x\in V} \left\|\quenchG_x(X_{T+2\tent}\in\cdot)-\frac1m\sum_{i=1}^m\pi_i\right\|_{\rm TV}=\op(1)\,.$$
		 Then, 
		 \begin{equation} \label{4.18}
		 \begin{split}
		 \left\|\pi-\frac1m\sum_{i=1}^m\pi_i\right\|_{\rm TV} 
		 &\le \sum_{x \in V} \pi(x) \left\|\quenchG_x(X_{T+2\tent}\in\cdot)-\frac1m\sum_{i=1}^m\pi_i\right\|_{\rm TV}\\
		 &\le \max_{x\in V} \left\|\quenchG_x(X_{T+2\tent}\in\cdot)-\frac1m\sum_{i=1}^m\pi_i\right\|_{\rm TV}=\op(1)\,.
		 \end{split}
		\end{equation} 

		This concludes the proof.
	\end{proof}
\section{Strongly supercritical regime}\label{sec:super-strong}

If $ \sqrt{n}\log(n)^{-2}\lesssim\alpha^{-1}\ll \lambda n\log(n)$, the relevant timescale, beyond the scale of $\tent$, is $\alpha^{-1}  \gtrsim  \sqrt{n}\log(n)^{-2}$. We cannot rely on the approximation obtained in Proposition \ref{prop:quenched-law} to control the random walk behavior of such a timescale.
For this \emph{strongly supercritical} regime, we need to generalize the estimates obtained for the \emph{weakly supercritical} regime in Section \ref{sec:super-weak} using a different set of tools. In particular, the following two statements provide the analogue of Proposition \ref{prop:quenched-law} and Theorem \ref{thm:pi-approximation}.

\begin{proposition}\label{prop:quenched-law-new}	For $\sqrt{n}\log(n)^{-2} \lesssim  \alpha^{-1}\ll \lambda n\log(n)$, there exists some $C\gg 1$ such that, if $t\le C\alpha^{-1}$, 
	\begin{equation}\label{eq:part-new-gen}
		\max_{i \le m}\max_{x \in V_i}\left|\quenchG_x(X_t\in V_i)-\semi^t(i,i)\right|=\op(1)\,,
	\end{equation}
    where, $\semi^t(c(x),i)$ is defined as in Eq.~\eqref{eq:Q-def}.
\end{proposition}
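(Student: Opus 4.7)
The heuristic underlying the statement is that the community-valued process $(c(X_t))_{t\ge0}$ behaves as the mean-field Markov chain with transition matrix $\semi$ from \eqref{eq:Q-def}. On the relevant timescale, the effective dynamics is driven solely by the inter-community jumps: within a community the walk has ample time to reach local equilibrium between jumps (since $\alpha^{-1} \gg \tent$ and local mixing takes $O(\tent)$ steps), and by construction each jump is to a uniformly chosen other community. The difficulty is quantifying this intuition: annealed bounds such as those in Proposition \ref{prop:quenched-law} cease to be useful beyond the scale $\sqrt n / \log^2 n$, where cycles in the explored neighborhood become abundant.

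\textbf{Decomposition via jump times.}
Denote by $\tau_1 < \tau_2 < \cdots$ the successive inter-community jump times (each defined analogously to \eqref{eq:def-tau-jump}) and write $J_k = c(X_{\tau_k})$ for the community visited at $\tau_k$, with $J_0 = c(x)=i$. Since $t \le C \alpha^{-1}$ with $C\gg1$, the expected number $N_t$ of jumps by time $t$ is of order $C$, hence concentrated in a window $[0,O(C)]$ with high probability. The plan is to show that, uniformly in $x \in V_i$, the joint quenched law of $(N_t, J_0, \ldots, J_{N_t})$ is $\op(1)$-close to that of a Bernoulli-jump process with per-step jump rate $\alpha$ and destinations uniform over $\{1,\ldots,m\}\setminus\{\text{current community}\}$. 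Summing over the resulting probability that $J_{N_t}=i$ then recovers precisely $\semi^t(i,i)$.

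\textbf{Local mixing and identification of rates.}
The fundamental input will be Corollary \ref{coro:mixing}: after $S = 3\tent \log n$ steps the walk on a single $G_j$ is at total-variation distance $\op(1/n)$ from $\pi_j$. Since $S \ll \alpha^{-1}$, between two consecutive jumps the walk has enough time to locally equilibrate. Consequently, after a window of length $S$ spent in community $j$, the conditional probability of a jump at the next step is, up to $\op(1)$ error,
\[
\sum_{y \in V_j} \pi_j(y)\, \frac{O_y^+}{D_y^+} \;=\; \alpha + \op(1),
\]
where the equality uses Theorem \ref{thm:pi-char}, Proposition \ref{prop:concentration-degrees}, and the fact that $O_y^+ \sim \bin(D_y^+,\alpha)$. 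Moreover, conditional on a jump occurring, the destination is essentially uniform over the other $m-1$ communities, by the very definition of the rewiring procedure. Iterating these two facts via the strong Markov property at the $\tau_k$ produces the desired coupling with the chain governed by $\semi$.

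\textbf{Main obstacle.}
The hard part will be making the above picture quantitative \emph{uniformly} in $x\in V_i$ and in $t \le C\alpha^{-1}$, particularly when $\alpha^{-1}$ is close to the upper cutoff $\lambda n\log n$. Three sources of error need careful handling: (i) the transient of length $O(S)$ after each jump, during which the walk has not yet locally mixed; (ii) fluctuations of $O_y^+/D_y^+$ around $\alpha$ accumulated along the trajectory, which must be controlled jointly over all possible paths; and (iii) the fact that the process inside a community between jumps is the walk on $G_j$ \emph{with the rewired out-edges removed}, which differs mildly from the SRW on $G_j$ whose mixing is governed by Corollary \ref{coro:mixing}. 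A natural implementation is to induct on the number of jumps, at each stage coupling the trajectory on $G$ with an idealized process following $\semi$, and to show that each step of the coupling succeeds with conditional probability $1-\op(1/C)$; since at most $O(C)$ inter-jump intervals are relevant, the accumulated error will remain $\op(1)$.
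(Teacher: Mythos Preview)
Your high-level strategy---couple $(c(X_t))_{t\ge 0}$ with the mean-field chain $\semi$ by decomposing over the inter-community jump times---is exactly what the paper does. The gap lies in the implementation.

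Your key claim is that after $S=3\tent\log n$ steps spent in community $j$, the walk is approximately distributed as $\pi_j$ (via Corollary~\ref{coro:mixing}), and hence the per-step jump rate is $\sum_y \pi_j(y)\,O_y^+/D_y^+\approx\alpha$. But Corollary~\ref{coro:mixing} applies to the walk on $G_j$, not to the walk on $G$ conditioned on $\{\tauj>S\}$; the latter is a \emph{killed} chain on $V_j$, whose long-run distribution is the quasi-stationary distribution $\mu_j^\star$, not $\pi_j$. You list this as obstacle (iii) but offer no resolution. Even granting the approximation at time $S$, the subsequent claim that the first jump time is geometric with rate $\alpha$ is not immediate: from $\pi_j$ the one-step jump probability is indeed $\approx\alpha$, but the events ``jump at step $k$'' are correlated through revisits to gates, and on the timescale $\alpha^{-1}$ (which can be as large as $n\log n$) the walk may revisit vertices many times. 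Your proposed error budget of $\op(1/C)$ per stage has no mechanism behind it.

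The paper resolves exactly these points by making the quasi-stationary structure explicit. It shows (Proposition~\ref{prop:rate}, via Theorem~\ref{thm:aldous}) that from $\mu_j^\star$ the hitting time of the gate set $\gates_j$ is \emph{exactly} geometric with rate $\mathfrak{l}_j\sim\alpha\lambda\log n$; this sidesteps any need to control revisits. It then proves a homogenization property (Proposition~\ref{prop:renewal}): almost all gates have out-degree $(1+o(1))\lambda\log n$ and a single rewired edge, and the hitting distribution on $\gates_j$ is essentially uniform, so the conditional probability of an actual jump at each gate visit is $\sim 1/(\lambda\log n)$. Combining the two yields $\tauj\approx{\rm Exp}(\alpha)$ (Corollary~\ref{coro:renewal}), and a separate argument (Lemma~\ref{lemma:unif-jumps}) gives the uniform destination. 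The iterated coupling you envisage is then made precise in Definition~\ref{def:coupling} and Proposition~\ref{prop:coupling}: one couples the walk on $G$ with a toy process initialized at $\mu_j^\star$ at each stage, and the per-stage success probability is $1-\op(1)$, not $1-\op(1/C)$; this is why the statement only claims \emph{some} $C\gg 1$ (diverging slowly enough that $C^2\cdot\op(1)\to 0$), rather than arbitrary $C$.
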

\begin{theorem}\label{thm:pi-approximation-new}
For $\sqrt{n}{\log(n)^{-2}}  \lesssim  \alpha^{-1} \ll \lambda n\log(n)$, if $T$ is such that $T\gg \alpha^{-1}$, then
	\begin{equation}\label{eq:pi-approx-1}
		\max_{x\in V}\left\|\quenchG_x(X_{T}\in\cdot)-\pi\right\|_{\rm TV}=\op(1)\,.
	\end{equation}
	Moreover, $\left\|\frac1m\sum_{i=1}^m{\pi_i}-\pi\right\|_{\rm TV}=\op(1)\,.$
    \end{theorem}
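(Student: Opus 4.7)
My plan is to mirror closely the proof of Theorem~\ref{thm:pi-approximation}, replacing the use of Proposition~\ref{prop:quenched-law} with Proposition~\ref{prop:quenched-law-new} and inserting a single Chapman--Kolmogorov step to bridge the gap between the timescale $t\lesssim\alpha^{-1}$ on which the latter is available and the target timescale $s\gg\alpha^{-1}$. Writing $T=s+2\tent$ with $s:=T-2\tent$ (which is still $\gg\alpha^{-1}$ since $\tent\ll\alpha^{-1}$ in the supercritical regime), the chain of inequalities \eqref{eq:3}--\eqref{eq:4} goes through verbatim and yields
\begin{equation*}
\max_{x\in V}\|\quenchG_x(X_T\in\cdot) - \tfrac{1}{m}\textstyle\sum_{j=1}^m \pi_j\|_{\rm TV} \le \max_{j,\,y\in V_j}\|\quenchG_y(X_{2\tent}\in\cdot)-\pi_j\|_{\rm TV} + \max_{x\in V}\sum_{j=1}^m |\quenchG_x(X_s\in V_j) - \tfrac{1}{m}|.
\end{equation*}
The first term is $\op(1)$ by Theorem~\ref{thm:cutoff-community} (applied with $t=2\tent$, which lies in the admissible window since $3\tent\ll\min\{\alpha^{-1},\sqrt n\log(n)^{-2}\}$), so the task reduces to the uniform equidistribution $\max_{x\in V}\max_{j\le m}|\quenchG_x(X_s\in V_j)-1/m|=\op(1)$.

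For this step, I would fix an arbitrary large constant $C$, set $t_0:=\lfloor C\alpha^{-1}\rfloor$ (which is $\le s$ for $n$ large since $s\gg\alpha^{-1}$), and use the Markov property to write
\begin{equation*}
\quenchG_x(X_s\in V_j)=\sum_{z\in V}\quenchG_x(X_{s-t_0}=z)\,\quenchG_z(X_{t_0}\in V_j).
\end{equation*}
Applying Proposition~\ref{prop:quenched-law-new}---together with its off-diagonal counterpart $\max_{z\in V}|\quenchG_z(X_{t_0}\in V_j)-\semi^{t_0}(c(z),j)|=\op(1)$, which follows from the very same argument by the exchangeability of the $m-1$ communities different from $c(z)$---yields the uniform-in-$z$ approximation $\quenchG_z(X_{t_0}\in V_j)=\semi^{t_0}(c(z),j)+\op(1)$. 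The explicit formula~\eqref{eq:Q-def} then gives $|\semi^{t_0}(i,j)-1/m|\le\tfrac{m-1}{m}\bigl(1-\tfrac{m\alpha}{m-1}\bigr)^{t_0}=\tfrac{m-1}{m}{\rm e}^{-Cm/(m-1)}(1+o(1))$, which can be made arbitrarily small by taking $C$ large. Integrating against the kernel $\quenchG_x(X_{s-t_0}=\cdot)$ and summing over the bounded number of communities produces $\max_{x,j}|\quenchG_x(X_s\in V_j)-1/m|\le\tfrac{m-1}{m}{\rm e}^{-Cm/(m-1)}+\op(1)$; sending $C\to\infty$ after $n\to\infty$ closes the first assertion.

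For the second assertion $\|\tfrac1m\sum_j\pi_j-\pi\|_{\rm TV}=\op(1)$, stationarity of $\pi$ gives exactly the chain of inequalities \eqref{4.18} from the weakly supercritical proof: $\|\pi-\tfrac1m\sum_j\pi_j\|_{\rm TV}\le\sum_x\pi(x)\|\quenchG_x(X_T\in\cdot)-\tfrac1m\sum_j\pi_j\|_{\rm TV}\le\max_{x\in V}\|\quenchG_x(X_T\in\cdot)-\tfrac1m\sum_j\pi_j\|_{\rm TV}$, and the last term is $\op(1)$ by what we have just proved.

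The main obstacle is the bootstrap step. The delicate point is that one needs simultaneously (i) a quenched estimate on $\quenchG_z(X_{t_0}\in V_j)$ which is uniform over all starting vertices $z\in V$ (otherwise the integration against $\quenchG_x(X_{s-t_0}=\cdot)$ would not survive the union over vertices), and (ii) the mean-field matrix $\semi^{t_0}$ to have already equilibrated to uniform, which forces $t_0$ to be a large multiple of $\alpha^{-1}$. Establishing such a uniform quenched estimate on the timescale $\alpha^{-1}\gtrsim\sqrt n\log(n)^{-2}$---where the annealed argument of Lemma~\ref{lemma:annealed-law} breaks down due to cycles---is the technical heart of Section~\ref{sec:super-strong} and is encoded in Proposition~\ref{prop:quenched-law-new}.
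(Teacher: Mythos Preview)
Your proposal is correct and follows essentially the same route as the paper, which also reduces to Eq.~\eqref{1/m} via the chain \eqref{eq:3}--\eqref{eq:4} and then invokes Proposition~\ref{prop:quenched-law-new}; the only cosmetic difference is that the paper first restricts to $\alpha^{-1}\ll T\ll C\alpha^{-1}$ (with the diverging $C$ of Proposition~\ref{prop:quenched-law-new}) and then extends by monotonicity of the distance to equilibrium, whereas you insert a Chapman--Kolmogorov step at $t_0=\lfloor C\alpha^{-1}\rfloor$---both arguments work. One small caveat: your ``exchangeability'' justification for the off-diagonal version of Proposition~\ref{prop:quenched-law-new} is not quite right (quenched, the $m-1$ other communities are \emph{not} exchangeable), but the needed estimate $\max_{z,j}|\quenchG_z(X_{t_0}\in V_j)-\semi^{t_0}(c(z),j)|=\op(1)$ does follow from the mean-field coupling established in the proof of Proposition~\ref{prop:quenched-law-new} via Lemma~\ref{lemma:unif-jumps}, and the paper itself relies on this off-diagonal estimate just as implicitly.
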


As mentioned above, despite the clear analogy with Proposition \ref{prop:quenched-law} and Theorem \ref{thm:pi-approximation}, the proofs of Proposition \ref{prop:quenched-law-new} and Theorem \ref{thm:pi-approximation-new} exploit a different line of argument, based on quasi-stationary distributions, which also shows the emergence of homogenization for such small values of $\alpha$. In order to facilitate the reading, before entering the details, we provide a brief account of the organization of the rest of this section.

\subsubsection*{Organization of the section} The rest of the section is divided into five parts. In Section \ref{suse:gates} we introduce the notion of \emph{gates}. In words, a \emph{gate} is a vertex that has an edge that points toward another community. Clearly, in the strongly supercritical regime, {gates} are rare, since only few vertices have such inter-community connections. With this idea in mind, in Section \ref{suse:quasi-stationary} we provide a control on the first time the random walk visits the set of {gates}. In particular, using the framework of \emph{quasi-stationary distributions}, we show that such a first visit is well approximated by an exponential random variable and characterize its expectation. In Section \ref{suse:coupling} we use the understanding of the hitting time of the set of gates to couple the random walk with a toy process which enjoys some sort of \emph{renewal} property which makes it simpler to analyze. Finally, in Section \ref{suse:proof-prop-thm} we use this coupling to complete the proof of Proposition \ref{prop:quenched-law-new} and Theorem \ref{thm:pi-approximation-new}.

\subsection{The gates}\label{suse:gates}
Fixed a community $i \le m$, the idea is to couple the random walk on $G$ started at some $x\in V_i$ with a simpler process on $V_i$, up to the first time when the random walk moves to another community. To provide further details, it is necessary to first introduce some additional notation.
We we call \emph{gates of $V_i$} the subset of vertices in $V_i$ having at least a rewired out-edge, i.e.,
\begin{equation}
	\gates_i\coloneqq\{y\in V_i\mid  O_y^+>0\}\,.
\end{equation}
In the regime $\alpha^{-1} \gtrsim  \sqrt{n}\log(n)^{-2}$, this set turns out to be small, in the sense that it has a small stationary value, as explained by the next result.
\begin{lemma}\label{lemma:pi-gates}
{If $\alpha^{-1}\gg \lambda\log(n)$},
	\begin{equation}
		\max_{i \le m}\left|\frac{\pi_i(\gates_i)}{\alpha\lambda\log(n)}-1\right|=\op(1)\,.
	\end{equation}
\end{lemma}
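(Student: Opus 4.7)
The plan is to use Theorem \ref{thm:pi-char} to rewrite $\pi_i(\gates_i)$ as a normalized sum of in-degrees over the (random) gate set, and then control this sum by first conditioning on the underlying \er graph $G_i$ (before rewiring) and second by a second-moment computation exploiting the independence between in-edges and out-edges in the \er graph.

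First, by Theorem \ref{thm:pi-char}, uniformly in $i\le m$ and $x\in V_i$ one has $\pi_i(x)=(1+\op(1))D_{x,i}^-/(pn^2)$, so it suffices to prove that
\[
Z_i:=\sum_{x\in V_i}D_{x,i}^-\one\{x\in\gates_i\}
\]
satisfies $Z_i=(1+\op(1))\alpha\lambda^2 n\log^2(n)$, uniformly in $i$. Conditionally on $G_i$, the indicators $\one\{x\in\gates_i\}$ for $x\in V_i$ are independent Bernoulli variables with parameters $1-(1-\alpha)^{D_x^+}$, so
\[
\E[Z_i\mid G_i]=\sum_{x\in V_i}D_{x,i}^-\bigl(1-(1-\alpha)^{D_x^+}\bigr).
\]
Since $\alpha^{-1}\gg\lambda\log(n)$ and $D_x^+\lesssim\lambda\log(n)$ uniformly by Proposition \ref{prop:concentration-degrees}, a Taylor expansion yields $\E[Z_i\mid G_i]=(1+o(1))\alpha\sum_x D_{x,i}^-D_x^+$ w.h.p.

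Next, I would control $\sum_x D_{x,i}^-D_x^+$ by the second moment method, using that $D_{x,i}^-$ depends only on in-edges to $x$ while $D_x^+$ depends only on out-edges from $x$, hence they are independent. This gives expectation $n((n-1)p)^2=(1+o(1))n\lambda^2\log^2(n)$. For the variance, diagonal terms contribute $\var(D_{x,i}^-D_x^+)=O((\lambda\log(n))^3)$ each, totalling $O(n\lambda^3\log^3(n))$; for $x\neq x'$, the random variables $D_{x,i}^-D_x^+$ and $D_{x',i}^-D_{x'}^+$ share randomness only through the two edges $(x,x')$ and $(x',x)$, which yields $\cov(\cdot)=O(n^2p^3)$ per pair and an off-diagonal total of $O(n^4p^3)=O(n\lambda^3\log^3(n))$. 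Both are $o(n^2\lambda^4\log^4(n))=o((\E[\cdot])^2)$, so Chebyshev gives $\sum_x D_{x,i}^-D_x^+=(1+\op(1))n\lambda^2\log^2(n)$, hence $\E[Z_i\mid G_i]=(1+\op(1))\alpha\lambda^2 n\log^2(n)$.

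To pass from $\E[Z_i\mid G_i]$ to $Z_i$ itself, I would invoke the conditional Chebyshev inequality: the independent Bernoulli structure gives $\var(Z_i\mid G_i)\le\sum_x(D_{x,i}^-)^2\alpha D_x^+\lesssim\alpha n(\lambda\log(n))^3$ w.h.p. Under the ambient hypothesis $\alpha^{-1}\ll\lambda n\log(n)$ of this section, the ratio to $(\E[Z_i\mid G_i])^2\asymp\alpha^2\lambda^4 n^2\log^4(n)$ is $O(1/(\alpha n\lambda\log(n)))=o(1)$, so $Z_i=(1+\op(1))\E[Z_i\mid G_i]$. Combining all steps and dividing by $pn^2=\lambda n\log(n)$ yields $\pi_i(\gates_i)=(1+\op(1))\alpha\lambda\log(n)$; uniformity over $i\le m$ follows from $m=O(1)$ and the union bound. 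The main delicate point is the off-diagonal covariance bound for $\sum_x D_{x,i}^-D_x^+$, where one must carefully track the weak dependencies arising from the two shared edge variables between any pair $x\neq x'$; the remaining steps are routine Chernoff/Chebyshev arguments.
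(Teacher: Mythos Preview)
Your proof is correct, but it follows a different decomposition than the paper's. Both start by invoking Theorem~\ref{thm:pi-char} to reduce to the normalized sum $Z_i=\sum_{x\in V_i}D_{x,i}^-\one\{x\in\gates_i\}$, but then diverge. The paper exploits directly that the event $\{x\in\gates_i\}$ depends only on the out-edges of $x$ and the rewiring, hence is independent of \emph{all} the in-degrees $(D_{y,i}^-)_{y\in V_i}$. This lets it first concentrate $|\gates_i|$ around $n\alpha\lambda\log(n)$ by Chebyshev, and then, conditionally on $|\gates_i|$, treat $\sum_{x\in\gates_i}D_{x,i}^-$ as a sum of $|\gates_i|$ i.i.d.\ ${\rm Bin}(n-1,p)$ variables and apply Chernoff. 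By contrast, you condition on the entire graph $G_i$, linearize the gate probability via $1-(1-\alpha)^{D_x^+}\sim\alpha D_x^+$, and then run a second-moment argument on $\sum_x D_{x,i}^- D_x^+$ followed by a conditional Chebyshev for the rewiring randomness. The paper's route is shorter and avoids the off-diagonal covariance computation entirely; your route is more systematic and would adapt more readily to models where gate membership and in-degrees are only weakly (rather than fully) independent. Both implicitly require $\alpha^{-1}\ll\lambda n\log(n)$ so that $\E[|\gates_i|]\to\infty$; you state this explicitly, while in the paper it is hidden in the existence of the scale $k$ with $\sqrt{n\lambda\alpha\log(n)}\ll k\ll n\lambda\alpha\log(n)$.
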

\begin{proof}
	For $i \le m$, thanks to Theorem \ref{thm:pi-char}, $\pi_i(\gates_i)$ is \whp well approximated by $\sum_{x \in \gates_i}\frac{D_{x,i}^-}{n^2p}$.
	The latter is a sum of random variables taken on a random set, but it is not difficult to show that it concentrates around its expectation. Indeed, for $k>0$ it holds
	\begin{equation} \label{eq:card-gates}
	\begin{split}
		\p(\big||\gates_i|-\E[|\gates_i |]\big|>k)&\le \frac{\var(|\gates_i|)}{k^2}=  \frac{\sum_{x \in V_i}\var(\one_{\{O_x^+>0\}})}{k^2}\\
		&= (1+o(1))\frac{n( \lambda \alpha \log(n))(1- \lambda \alpha \log(n))}{k^2}\,,
	\end{split}
	\end{equation}
    where we used that, for $x \in V_i$, we have 	
    $ 	\PP(x\in \gates_i)=1-(1-\alpha p)^n
	=\lambda \alpha \log(n)(1+o(1))$.
	Choosing $k$ such that $\sqrt{n \lambda\alpha \log(n)} \ll k \ll n\lambda\alpha \log(n)$, we get that
	\begin{equation} \label{one}
	\p\left(\big||\gates_i|-\lambda n \alpha \log(n)\big|>k\right) = o(1).
	\end{equation}
	Fixed any $\varepsilon>0$,
    using the Chernoff bound, we obtain
\begin{equation}\label{two}
	\begin{split}
		\p&\left(
        \bigg\{\bigg|\sum_{x \in \gates_i}{D_{x,i}^-}-|\gates_i|\lambda \log(n)\bigg|>\eps |\gates_i|\lambda \log(n)\bigg\}  
        \cap \bigg\{\Big||\gates_i|-n \alpha \lambda\log(n)\Big|<k\bigg\}
        \right) \\
		&\le\max_{\delta\in[-\frac1{10},\frac1{10}]} \p\left(\left|\sum_{y=1}^{n\alpha \lambda \log(n) (1+\delta)}{D_{y,i}^-}-n \alpha (\lambda\log(n))^2(1+\delta)\right|>(1+\delta)\eps n \alpha (\lambda\log(n))^2\right)\\
		&\le 2\exp\left\{-\frac{\eps^2 n \alpha (\lambda\log(n))^2}4\right\}\,.
	\end{split}
	\end{equation}
	Notice that the event $\{x \in \gates_i\}$ does not depend on $D_{x,i}^-$, and hence we can rely on the classical bound for the sums of \iid Bernoulli random variables. Then, choosing $\eps=\frac{1}{\log(n)}$ suffices to make the estimate in \eqref{two} vanish. The desired result then follows by combining \eqref{one} and \eqref{two}.
\end{proof}

\subsection{First visit time to gates}\label{suse:quasi-stationary}
We now introduce the so-called \emph{quasi-stationary distribution}, that is, the long-run distribution of the walk on $G_i$ conditioned to the event of not having hit the set $\gates_i$ yet. Let $[P_i]_{\gates_i}$ be the sub-Markovian kernel in which the rows and columns indexed by the vertices in $\gates_i$ have been removed. Then, called $\mathfrak{l}_i$ the largest eigenvalue of $[P_i]_{\gates_i}$, by the Perron-Frobenius theorem there exists a probability distribution $\muqs$ which is a left eigenvector for $[P_i]_{\gates_i}$ associated to the eigenvalue $\mathfrak{l}_i$. In particular, the hitting time of $\gates_i$ for the simple random walk on $V_i$ (with kernel $P_i$) started at $\muqs$ is \emph{exactly} geometrically distributed with parameter $\mathfrak{l}_i$. This $\mathfrak{l}_i$ can also be characterized at first order by the expected hitting time of $\gates_i$ starting at $\pi_i$. All these fact are summarized by the following proposition due to Aldous \cite{A82}.
\begin{theorem}[Cf.~Prop.~A.1 and Lemma A.2 in \cite{SQ}]\label{thm:aldous}
	Let $(W_t)_{t\ge 0}$ be a Markov chain on a finite state space $\Omega$ with transition matrix $\Pi$ and unique stationary distribution $\rho$, fully supported on $\Omega$. Let $\partial\in\Omega$ be a \emph{target state}. Then, there exist a unique probability distribution $\mu_\star$ on 
    $\Omega\setminus \partial$ and a unique $\mathfrak{l} \in (0,1)$ such that
	\begin{equation}
		\lim_{t\to\infty}\quench_\rho \left(W_t=x\mid \tau_{\partial}>t \right)=\mu_\star(x)\,,\qquad \forall x\in\Omega\setminus\partial\,,
	\end{equation}
	and
		\begin{equation}
		\quench_{\mu_\star} \left( \tau_{\partial}>t \right)=(1-\mathfrak{l})^t\,,\qquad \forall t\ge 0\,.
	\end{equation}
	Moreover, observing that $\mathfrak{l}=(\mathbf{E}_{\mu_\star}[\tau_{\partial}])^{-1}$, it holds
	\begin{equation}
		\left|\frac{\mathbf{E}_{\mu_\star}[\tau_{\partial}]}{\mathbf{E}_{\rho}[\tau_{\partial}]}-1 \right|=	\left|\frac{\mathfrak{l}^{-1}}{\mathbf{E}_{\rho}[\tau_{\partial}]}-1 \right|\le \frac{20}{3}\frac{t_{\rm mix}(2+\log\mathbf{E}_{\rho}[\tau_{\partial}])}{\mathbf{E}_{\rho}[\tau_{\partial}]}\,,
	\end{equation}
where
 \begin{equation}
t_{\rm mix}=	\inf\left\{t\ge 0\mid \max_{x\in\Omega}\|\Pi(y,\cdot)-\rho \|_{\rm TV}\le  (2{\rm e})^{-1} \right\}\,.
 \end{equation}
Additionally, consider a sequence of Markov chains with $\Omega_N=[N]$, for $N \in \N$.
If 
\begin{equation}
	\lim_{N\to\infty} T\times \rho(\partial)=0\,,
\end{equation}
where
\begin{equation}\label{eq:T-new}
	T= t_{\rm mix}\times \log\left(1/\min_{x \in\Omega_N}\rho(x)\right)\,,
\end{equation}
then, the expected hitting time of $\partial$, starting at stationarity can be estimated asymptotically by
\begin{equation}\label{eq:est-mean}
	\lim_{N\to\infty}\frac{\mathbf{E}_\rho[\tau_{\partial}]}{R/\rho(\partial)}=1\,,
\end{equation}
where
\begin{equation}\label{eq:R-new}
	R=\sum_{t=0}^{T} {\mathbf{P}}_{\partial}(W_{t} = \partial)\,.
\end{equation}
\end{theorem}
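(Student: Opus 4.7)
The plan is to split the theorem into three claims and handle them in order. First, for the existence and uniqueness of $(\mu_\star,\mathfrak{l})$ together with the exact geometric law, I invoke Perron--Frobenius on the substochastic kernel $[\Pi]_{\Omega\setminus\partial}$. Since $\Pi$ is irreducible on $\Omega$ with full-support stationary distribution and $\partial$ is accessible, the restricted kernel is nonnegative and irreducible with a simple strictly dominant eigenvalue $1-\mathfrak{l}\in(0,1)$ and a unique (up to scaling) positive left eigenvector, normalized to give $\mu_\star$. The quasi-stationary convergence then follows from the spectral gap between $1-\mathfrak{l}$ and the remaining eigenvalues of the restricted kernel. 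The exact geometric identity is immediate: $\mathbf{P}_{\mu_\star}(\tau_\partial>t)$ equals the total mass of $\mu_\star [\Pi]_{\Omega\setminus\partial}^t$, which by the left-eigenvector equation equals $(1-\mathfrak{l})^t$, and hence $\mathbf{E}_{\mu_\star}[\tau_\partial]=1/\mathfrak{l}$.

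For the quantitative comparison between $\mathbf{E}_\rho[\tau_\partial]$ and $\mathfrak{l}^{-1}$, the core idea is that after a time $s\asymp t_{\rm mix}\,(2+\log\mathbf{E}_\rho[\tau_\partial])$, the law of the chain started at $\rho$ and conditioned on $\{\tau_\partial>s\}$ is close in total variation to $\mu_\star$. One tunes $s$ so that the probability of hitting $\partial$ before time $s$ is negligible compared to $1/\mathbf{E}_\rho[\tau_\partial]$, and then applies the strong Markov property at $s$ to obtain a two-sided comparison with the multiplicative error of the claimed order. The explicit constant $20/3$ is a book-keeping artifact of combining submultiplicativity of total variation distance with the Cauchy--Schwarz-type bound comparing $\rho$ and $\mu_\star$.

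For the asymptotic $\mathbf{E}_\rho[\tau_\partial]\sim R/\rho(\partial)$, I would use a renewal-style argument at $\partial$. By Kac's formula, starting from $\partial$ the mean return time to $\partial$ equals $1/\rho(\partial)$. The quantity $R$ in \eqref{eq:R-new} is the expected number of visits to $\partial$ within a time window of length $T$ starting from $\partial$, so each ``cluster'' of short returns lasts on average $R$ steps, and the typical inter-cluster gap is $R/\rho(\partial)$. Under the hypothesis $T\,\rho(\partial)\to 0$, clusters are well separated and the hitting time from outside the cluster becomes approximately exponential, with mean given by this gap length; the preceding comparison step then transfers the estimate from $\mu_\star$ to an arbitrary $\rho$.

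The main technical obstacle will be this last step: rigorously identifying $R$ as the correct combinatorial correction requires uniform control on excursions of intermediate lengths, and a delicate interplay between the quasi-stationary approximation from the second step and renewal theory. Everything else reduces to spectral theory of nonnegative matrices and the strong Markov property.
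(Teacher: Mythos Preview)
The paper does not prove this theorem. It is quoted as a known result, attributed to Aldous \cite{A82} and cited in the precise form given in \cite[Prop.~A.1 and Lemma~A.2]{SQ}; no proof appears in the text, and the theorem is used as a black box in the derivation of Proposition~\ref{prop:rate}.

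Your sketch is a reasonable outline of how such a result is actually established in the literature. The Perron--Frobenius argument for the existence, uniqueness, and geometric tail of $(\mu_\star,\mathfrak{l})$ is exactly right. The comparison $|\mathfrak{l}^{-1}/\mathbf{E}_\rho[\tau_\partial]-1|\lesssim t_{\rm mix}\log(\cdots)/\mathbf{E}_\rho[\tau_\partial]$ does indeed go through a coupling/strong-Markov argument after a burn-in of order $t_{\rm mix}\log(\cdots)$, though your description of how the constant $20/3$ arises is more impressionistic than accurate (it is not a Cauchy--Schwarz step). The renewal/Kac heuristic for $\mathbf{E}_\rho[\tau_\partial]\sim R/\rho(\partial)$ is also the right picture; the rigorous version, as in \cite{SQ} or \cite{MQS}, controls the contribution of excursions of length in $(0,T]$ versus $(T,\infty)$ using the hypothesis $T\rho(\partial)\to 0$ together with the $\ell^\infty$ mixing bound encoded in the definition of $T$ in \eqref{eq:T-new}. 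So your identification of the ``main technical obstacle'' is accurate, but note that this is precisely what the cited references carry out, and the present paper simply imports the conclusion.
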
  
In our setting, the \emph{target state} is actually a \emph{target set}, namely $\gates_i$. This is not a big issue: indeed, if one considers the Markov kernel $\tilde P_i$ on $\tilde V_i=(V_i\setminus\gates_i)\cup\partial$, where the state $\partial$ represent the merging of $\gates_i$ into a single state and the transitions are set to
\begin{equation}
	\tilde P_i(x,y)=\begin{cases}
		P_i(x,y)&x,y\neq \partial\\
		\sum_{z\in\gates_i}P_i(x,z)&x\neq\partial\text{ and }y=\partial\\
		\sum_{z\in\gates_i}\frac{\pi_i(z)}{\pi_i(\gates_i)}P_i(z,y)&x=\partial\text{ and }y\neq\partial\\
		\sum_{z\in\gates_i}\sum_{v\in\gates_i}\frac{\pi_i(z)}{\pi_i(\gates_i)}P_i(z,v)&x=\partial\text{ and }y=\partial
	\end{cases}\,,
\end{equation}
then one can realize at once that the restriction of $\tilde P_i$ to $V_i\setminus \partial$ coincides with the restriction of $P_i$ to $V_i\setminus\gates_i$. Moreover, by direct computation, it is possible to see that, as soon as $P_i$ has a unique stationary distribution $\pi_i$, then $\tilde P_i$ has a unique stationary distribution $\tilde \pi_i$ satisfying
\begin{equation}
	\tilde \pi_i(x)=\begin{cases}
		\pi_i(x)&x\neq\partial\\
		\pi_i(\gates_i)&x=\partial
	\end{cases}\,.
\end{equation}
 For each $i=1,\dots,m$, we take the choice $\Pi=\tilde P_i$.  
Replacing $\rho(\partial)$ by $\pi_i(\gates_i)$, and $R$ by
\begin{equation}\label{eq:def-R-i}
	\tilde R_i=1+\sum_{t=1}^{\tilde T_i}\tilde P_i^t(\partial,\partial)\,,
\end{equation}
where
\begin{equation}
	\tilde T_i=	\tilde{t}_{{\rm mix},i} \times\log\bigg(1/\min_{x\in \tilde V_i}\tilde{\pi}_i(x)\bigg)\,,
\end{equation}
and
\begin{equation}\label{eq:def-tilde-tmix}
	\tilde{t}_{{\rm mix},i}= \inf\left\{t\ge 0\,\bigg\rvert \max_{x\in \tilde V_i}\|\tilde{P}_i^t(x,\cdot)-\tilde{\pi}_i \|_{\rm TV}\le  (2{\rm e})^{-1} \right\},
\end{equation}
we may estimate the quantity
\begin{equation}
	\mathbf{E}_{\pi_i}[\tau_{\gates_i}]=\tilde{\mathbf{E}}_{\tilde\pi_i}[\tau_{\partial}]\,
\end{equation}
as in \eqref{eq:est-mean}, and provide the following rewriting of Theorem \ref{thm:aldous}, in which we achieve a first-order asymptotic approximation of the rate $\mathfrak{l}$, and which will be proved in the following subsection. 
\begin{proposition}\label{prop:rate}
	With high probability, for every $i \le m$ there exists a probability distribution $\muqs$, supported on $V_i \setminus \gates_i$, and a real number $\mathfrak{l}_i$
	such that
	\begin{equation}
		\quench^{G_i}_{\muqs} \left( \tau_{\gates_i}>t \right)=(1-\mathfrak{l}_i)^t\,,
	\end{equation}
	where $\mathfrak{l}_i$ satisfies
	\begin{equation}\label{eq:rate}
		\mathfrak{l}_i=(1+\op(1)) \lambda\alpha\log(n)\,.
	\end{equation}
	In particular
	\begin{equation}
		\max_{i \le m}\sup_{t \ge 0}\left|\quench^{G_i}_{\muqs} \left(\lambda\alpha \log(n)\cdot\tau_{\gates_i}>t \right)-{\rm e}^{-t}\right|=\op(1)\,.
	\end{equation}
\end{proposition}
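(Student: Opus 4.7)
To prove Proposition~\ref{prop:rate}, I would apply the Aldous framework (Theorem~\ref{thm:aldous}) to the collapsed kernel $\tilde P_i$ on $\tilde V_i=(V_i\setminus\gates_i)\cup\{\partial\}$ introduced just above the proposition. Existence of the quasi-stationary distribution $\muqs$ supported on $V_i\setminus\gates_i$, together with the identity $\quench^{G_i}_{\muqs}(\tau_{\gates_i}>t)=(1-\mathfrak{l}_i)^t$ for some $\mathfrak{l}_i\in(0,1)$, is an immediate consequence of the first part of Aldous's statement (equivalently, of Perron-Frobenius applied to the sub-Markov kernel $[P_i]_{\gates_i}$), since trajectories of $\tilde P_i$ started in $V_i\setminus\gates_i$ coincide with trajectories of the SRW on $G_i$ until they hit $\gates_i$. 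Thus the real content of Proposition~\ref{prop:rate} is the first-order identification $\mathfrak{l}_i=(1+\op(1))\lambda\alpha\log n$; once this is proved, the concluding uniform exponential approximation follows from $(1-\mathfrak{l}_i)^{\lceil t/(\lambda\alpha\log n)\rceil}=\exp(-t(1+\op(1)))$ and the uniform continuity of $s\mapsto e^{-s}$ on $[0,\infty)$.

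For the identification of $\mathfrak{l}_i$, I would rely on the quantitative part of Theorem~\ref{thm:aldous}, cf.~\eqref{eq:est-mean}--\eqref{eq:R-new}. Applied to $\tilde P_i$ with target $\partial$, it produces
\begin{equation*}
\mathbf{E}_{\pi_i}[\tau_{\gates_i}]=(1+o(1))\frac{\tilde R_i}{\pi_i(\gates_i)}\,,\qquad \left|\frac{\mathfrak{l}_i^{-1}}{\mathbf{E}_{\pi_i}[\tau_{\gates_i}]}-1\right|\lesssim \frac{\tilde t_{{\rm mix},i}\bigl(2+\log \mathbf{E}_{\pi_i}[\tau_{\gates_i}]\bigr)}{\mathbf{E}_{\pi_i}[\tau_{\gates_i}]}\,.
\end{equation*}
Lemma~\ref{lemma:pi-gates} already gives $\pi_i(\gates_i)=(1+\op(1))\lambda\alpha\log n$, while Theorem~\ref{thm:cutoff} combined with the subadditivity of TV and a natural coupling between $P_i$ and $\tilde P_i$ yields $\tilde t_{{\rm mix},i}\lesssim\tent$ and $\tilde T_i\lesssim\tent\log n$ w.h.p. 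Both a-priori conditions required by Aldous---namely $\tilde T_i\,\pi_i(\gates_i)=o(1)$ and the right-hand side of the error bound vanishing---are then easily checked in the strongly supercritical window $\sqrt n\log^{-2}n\lesssim\alpha^{-1}\ll\lambda n\log n$, using $\mathbf{E}_{\pi_i}[\tau_{\gates_i}]\gtrsim \sqrt n/\log^3 n$. Everything therefore reduces to showing $\tilde R_i=1+\op(1)$, uniformly in $i\le m$.

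This last estimate, equivalent to $\sum_{t=1}^{\tilde T_i}\tilde P_i^t(\partial,\partial)=\op(1)$, is the main obstacle and the reason why the strongly supercritical regime requires a dedicated analysis. My plan is to prove the uniform upper bound $\tilde P_i^t(\partial,\partial)\lesssim\alpha\log n$ for every $1\le t\le \tilde T_i$, from which the conclusion is immediate since $\tilde T_i\cdot\alpha\log n\asymp\alpha\log^3 n/\log\log n=o(1)$ in our window. I would split the sum at $t_\star=(1+\varepsilon)\tent$: for $t\ge t_\star$, Theorem~\ref{thm:cutoff} combined with subadditivity forces $\|P_i^t(x,\cdot)-\pi_i\|_{\rm TV}\to 0$ uniformly in $x$, which propagates to the collapsed chain as $\tilde P_i^t(\partial,\partial)=(1+o(1))\pi_i(\gates_i)$; for $t<t_\star$, I would exploit the tree-like structure of the out-neighborhoods (Lemma~\ref{lemma:tree-like}) to argue that the quenched distribution of the walk at time $t$, starting from any gate, spreads over $\asymp e^{\entropy t}$ leaves of a tree, and that the fraction of these leaves lying in the random set $\gates_i$ concentrates around $|\gates_i|/n\sim\alpha\log n$ via a Chernoff-type argument analogous to the one in Lemma~\ref{lemma:pi-gates}. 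Averaging over the starting gate with weights $\pi_i(x)/\pi_i(\gates_i)$ preserves this bound.

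Once $\tilde R_i=1+\op(1)$ is established, Aldous's formulas deliver $\mathfrak{l}_i^{-1}=(1+\op(1))/(\lambda\alpha\log n)$, which is precisely \eqref{eq:rate}. A union bound over the $m=O(1)$ communities preserves the w.h.p.~character of the conclusion, and the last display in Proposition~\ref{prop:rate} follows as explained at the end of the first paragraph.
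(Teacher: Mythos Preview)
Your overall architecture is the same as the paper's: apply Theorem~\ref{thm:aldous} to the collapsed chain $\tilde P_i$, feed in Lemma~\ref{lemma:pi-gates} for $\pi_i(\gates_i)$, and reduce the identification of $\mathfrak{l}_i$ to bounding $\tilde t_{{\rm mix},i}$ and proving $\tilde R_i=1+\op(1)$. The issue is in how you propose to obtain these last two estimates.

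First, the claim that a ``natural coupling between $P_i$ and $\tilde P_i$'' yields $\tilde t_{{\rm mix},i}\lesssim\tent$ is not justified. Such a coupling of $\bar P_i$ (or its lumping $\tilde P_i$) with $P_i$ agrees until the first visit to $\gates_i$; but the max in the definition of $\tilde t_{{\rm mix},i}$ includes the starting point $x=\partial$, for which the coupling fails at step one. Controlling $\|\bar P_i^{t}(\partial,\cdot)-\bar\pi_i\|_{\rm TV}$ therefore requires knowing that, after one step to $\muout$, the walk is unlikely to revisit $\gates_i$ for the next $\Theta(\tent)$ steps---precisely the kind of estimate you still have to establish, so the argument is circular. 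Second, your plan for $\tilde R_i$ has a gap in the range $4\varepsilon\tent\lesssim t<t_\star$. Lemma~\ref{lemma:tree-like} only guarantees tree-like out-neighbourhoods up to depth $4\varepsilon\tent$ with $\varepsilon$ \emph{small}; for $t$ of order $\tent$ the out-neighbourhood already contains $\Theta(n)$ vertices and is far from a tree, so the ``spread over $e^{\entropy t}$ leaves of a tree'' picture is unavailable there. Moreover, even where the tree picture holds, a Chernoff bound over $\asymp e^{\entropy t}$ leaves with success probability $\alpha\log n$ only beats the union bound over starting points when $e^{\entropy t}\gtrsim \alpha^{-1}$, which in the strongly supercritical window forces $t$ to be at least of order $\tfrac12\tent$---incompatible with the small-$\varepsilon$ requirement.

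The paper circumvents both difficulties through a single estimate, Lemma~\ref{lemma:key}: via a partial-construction annealed argument (fix the degree sequence and the marks determining $\gates_i$, then average over the matching), it shows directly that ${\quench}^{G_i}_{\mub}(\bar\tau_{\gates_i}^+\le \log(n)^3)=\op(n^{-1/3})$. This one bound immediately gives $\tilde R_i=1+\op(1)$ (Lemma~\ref{lemma:R-new}) and, combined with Corollary~\ref{coro:mixing}, a clean proof that $\tilde t_{{\rm mix},i}\le 6\tent\log n$ (Lemma~\ref{lemma:T}), because the only obstruction to transferring the $P_i$-mixing to $\bar P_i$ is a return to $\gates_i$ within the mixing window. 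In short, your reduction is correct, but the missing ingredient is a return-time estimate of the type in Lemma~\ref{lemma:key}; the tree/Chernoff approach you sketch does not cover the full time range needed.
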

To prove Proposition \ref{prop:rate}, we 
 provide a control on the objects that appeared above.
\begin{lemma}\label{lemma:T}
	Recalling the definition in \eqref{eq:def-tilde-tmix}
	\begin{equation}\label{eq:est-T}
		\PP\big(\max_{i \le m}\tilde t_{{\rm mix},i}\le 6\tent\log(n)\big)=1-o(1)\,.
	\end{equation}
\end{lemma}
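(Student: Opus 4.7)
The plan is to bound $\tilde t_{{\rm mix},i}$ by coupling $\tilde P_i$ with $P_i$ and invoking the $\ell^\infty$-fast-mixing from Corollary~\ref{coro:mixing}. Setting $S=3\tent\log(n)$ so that $6\tent\log(n)=2S$, Corollary~\ref{coro:mixing} together with monotonicity in time of the total variation distance yields $\max_{x\in V_i}\|P_i^{2S}(x,\cdot)-\pi_i\|_{\rm TV}=\op(1)$, which is already much smaller than the $(2\mathrm{e})^{-1}$ threshold defining $\tilde t_{{\rm mix},i}$.

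The coupling is constructed as follows. Let $\phi\colon V_i\to\tilde V_i$ be the projection collapsing $\gates_i$ onto $\partial$, and for each $\tilde x\in\tilde V_i$ define a lift $\mu_{\tilde x}$ on $V_i$ by $\mu_y=\delta_y$ when $\tilde x=y\in V_i\setminus\gates_i$, and $\mu_\partial=\pi_i|_{\gates_i}/\pi_i(\gates_i)$. The construction of $\tilde P_i$ is tailored so that $\tilde P_i(\partial,\cdot)=(\mu_\partial P_i)\circ\phi^{-1}$; this allows me to couple $\tilde X_t\sim\tilde P_i$ started at $\tilde x$ with $X_t\sim P_i$ started at $\mu_{\tilde x}$ so that $\tilde X_t=\phi(X_t)$ for every $t\le\tau$, where $\tau\coloneqq\inf\{t\ge 1:X_t\in\gates_i\}$. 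The coupling breaks exactly when the $P_i$-walk re-enters $\gates_i$, because only then does the lumped chain re-randomize according to $\pi_i|_{\gates_i}$ while the $P_i$-walk keeps its memory of the specific gate it is visiting.

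The quantitative core is the uniform hitting-time bound $\max_{\tilde x}\quench_{\mu_{\tilde x}}^{G_i}(\tau\le 2S)=\op(1)$. The plan is to condition on $G_i$ and on the whole trajectory $(X_t)_{t\le 2S}$: given $G_i$, the set $\gates_i$ depends only on the independent rewiring coin flips attached to the out-edges of $G_i$, and is in particular independent of the walk. Hence, for any fixed trajectory visiting $K\le 2S+1$ distinct vertices $v_1,\dots,v_K$, the events $\{v_j\in\gates_i\}$ are independent across $j$, each with probability $1-(1-\alpha)^{D_{v_j}^+}\le C_2\alpha\lambda\log(n)$ by Proposition~\ref{prop:concentration-degrees}. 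A union bound yields $(2S+1)C_2\alpha\lambda\log(n)=O(\alpha\tent\log^2(n))$; since in the strongly supercritical regime $\alpha\lesssim \log^2(n)/\sqrt{n}$ and $\tent\asymp \log(n)/\log\log(n)$, this bound is $o(1)$ w.h.p.

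The triangle inequality
\begin{equation*}
\|\tilde P_i^{2S}(\tilde x,\cdot)-\tilde\pi_i\|_{\rm TV}\le \quench_{\mu_{\tilde x}}^{G_i}(\tau\le 2S)+\|P_i^{2S}(\mu_{\tilde x},\cdot)-\pi_i\|_{\rm TV},
\end{equation*}
combined with $\tilde\pi_i=\pi_i\circ\phi^{-1}$ and the contraction of total variation under pushforward, turns the two previous estimates into a uniform $\op(1)$ bound; hence $\max_{\tilde x\in\tilde V_i}\|\tilde P_i^{2S}(\tilde x,\cdot)-\tilde\pi_i\|_{\rm TV}\le (2\mathrm{e})^{-1}$ w.h.p., and a union bound over the $m=O(1)$ communities yields $\max_i\tilde t_{{\rm mix},i}\le 2S$, as claimed. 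The main obstacle I foresee is making the coupling rigorous in the case $\tilde x=\partial$: the identity $\tilde P_i(\partial,\cdot)=(\mu_\partial P_i)\circ\phi^{-1}$ forces the coupling to use the specific lift $\mu_\partial=\pi_i|_{\gates_i}/\pi_i(\gates_i)$ for the $P_i$-walk, and one must carefully check that this choice synchronizes the two chains from time zero onward until the first re-entry into $\gates_i$.
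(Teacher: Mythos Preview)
Your coupling of $\tilde X$ with the $P_i$-walk via the projection $\phi$ is correct, and the triangle inequality at the end is fine. The gap is in the ``quantitative core'': your independence argument does \emph{not} establish the quenched bound $\max_{\tilde x}\quench_{\mu_{\tilde x}}^{G_i}(\tau\le 2S)=\op(1)$. What you actually compute, after conditioning on $G_i$ and on a fixed trajectory $(x_t)_{t\le 2S}$, is the \emph{rewiring}-probability $\PP_R(\exists t\colon x_t\in\gates_i\mid G_i)\le (2S+1)C_2\alpha\lambda\log(n)$. Averaging over the walk's randomness, this gives $\E_R[\quench_y^{G_i}(\tau\le 2S)\mid G_i]=o(1)$ for each $y$, i.e.\ an \emph{annealed} first-moment bound. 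But the claim you need is a quenched statement about the random variable $\quench_y^{G_i}(\tau\le 2S)$ (a function of $(G_i,\gates_i)$), uniformly over the $n$ possible starting points $y$. Markov's inequality combined with a union bound over $y$ would require $\E_R[\quench_y^{G_i}(\tau\le 2S)\mid G_i]=o(1/n)$, which your estimate $O(\alpha\tent\log^2(n))$ does not deliver anywhere in the strongly supercritical window. Concretely: with high probability there exist vertices $y$ with a gate among their out-neighbours (the expected number of such $y$ is $\gtrsim n\alpha\lambda^2\log^2(n)\gg 1$), and for those $y$ the quenched probability $\quench_y^{G_i}(\tau\le 2S)$ is at least of order $1/\log(n)$; so pointwise fluctuations of $\quench_y^{G_i}(\tau\le 2S)$ around its annealed mean are not negligible, and a bare first moment cannot control the maximum.

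The paper circumvents exactly this obstacle. Rather than bounding $\quench_y(\tau_{\gates_i}\le 2S)$ uniformly in $y$---which is never claimed---it works with the intermediate process $\bar X$ (transition $\muout=\mub P_i$ out of every gate) and compares $\bar P_i^{2S}(x,\cdot)$ with $P_i^{2S}(x,\cdot)$ on the event $\{\tau_{\gates_i}\le 2S\}$, splitting according to whether the first gate visit occurs before or after time $S$. The late-visit contribution is handled by Corollary~\ref{coro:mixing} and Lemma~\ref{lemma:pi-gates} via $\sum_{s=S}^{2S}\quench_x(X_s^i\in\gates_i)\le 5S\pi_i(\gates_i)=\op(1)$ uniformly. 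For the early-visit contribution the key point is that, regardless of the value of $\quench_x(\tau_{\gates_i}\le S)$, once a gate is hit at time $s\le S$ both chains still have at least $S$ steps to mix (Corollary~\ref{coro:mixing}), so the difference $|\quench^{G_i}_{\muout}(\bar X_{2S-s}\in A)-\quench^{G_i}_y(X^i_{2S-s}\in A)|$ is $\op(1)$; the only residual term is the probability of a \emph{second} gate visit starting from the specific distribution $\muout$, and that is controlled by the dedicated Lemma~\ref{lemma:key}. In short, the paper trades your uniform hitting-time bound (hard to prove quenched) for a careful two-scale decomposition plus a return-time estimate from a single, well-chosen initial law.
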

\begin{lemma}\label{lemma:R-new}
	Recalling the definition in \eqref{eq:def-R-i}
	\begin{equation}
		\max_{i \le m}\tilde R_i=1+\op(1).
	\end{equation}
\end{lemma}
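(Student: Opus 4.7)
The plan is to rewrite the quantity to control as the expected number of returns to $\partial$ of the merged chain and reduce it, via a strong Markov first-return argument, to a single hitting-probability estimate for the original chain on $G_i$. Set $M := \#\{t \in [1, \tilde T_i] : \tilde W_t = \partial\}$ so that $\tilde R_i - 1 = \tilde{\mathbf E}_\partial[M]$. Splitting at the first return time $\tilde\tau_\partial^+ := \inf\{t \ge 1 : \tilde W_t = \partial\}$ and applying the strong Markov property yields the renewal-type inequality
$$\tilde{\mathbf E}_\partial[M] \le \frac{p_i}{1-p_i}, \qquad p_i := \tilde{\mathbf P}_\partial(\tilde\tau_\partial^+ \le \tilde T_i).$$
By the construction of $\tilde P_i$, the law of $\tilde\tau_\partial^+$ under $\tilde{\mathbf P}_\partial$ coincides with that of $\tau_{\gates_i}^+ := \inf\{t \ge 1 : X_t \in \gates_i\}$ under $\quench^{G_i}_\mu$, where $\mu := \pi_i|_{\gates_i}/\pi_i(\gates_i)$: the first step from $\partial$ uses the mixture $\mu P_i$, and between consecutive visits to $\gates_i$ the merged chain agrees pathwise with $P_i$. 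It is therefore enough to prove $\max_{i \le m} \quench^{G_i}_\mu(\tau_{\gates_i}^+ \le \tilde T_i) = \op(1)$.

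Next, a union bound followed by a change of measure gives
$$p_i \le \sum_{t=1}^{\tilde T_i} \quench^{G_i}_\mu(X_t \in \gates_i) = \frac{1}{\pi_i(\gates_i)} \sum_{t=1}^{\tilde T_i} \quench^{G_i}_{\pi_i}\bigl(X_0 \in \gates_i,\, X_t \in \gates_i\bigr),$$
which I split at $S := 3\tent \log n$. For $t \in [S, \tilde T_i]$, Corollary \ref{coro:mixing} gives $P_i^t(x,y) = \pi_i(y)(1+\op(1))$ uniformly in $x,y$, whence the joint probability equals $\pi_i(\gates_i)^2(1+\op(1))$, and combining with Lemmas \ref{lemma:pi-gates} and \ref{lemma:T} the contribution of this range is of order $\tilde T_i\,\pi_i(\gates_i) \asymp \tent\alpha\log^3 n$; this is $o(1)$ because $\alpha \lesssim \log^2(n)/\sqrt{n}$ in the strongly supercritical regime.

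The short-time contribution $\sum_{t=1}^{S-1} \quench^{G_i}_{\pi_i}(X_0 \in \gates_i,\, X_t \in \gates_i)$, where mixing is unavailable, is the main obstacle. Heuristically, for a typical $x \in \gates_i$ and $t \le S$ the walker distributes roughly uniformly across the tree-like out-neighborhood of $x$ (Lemma \ref{lemma:tree-like}), which intersects $\gates_i$ with density $|\gates_i|/n \sim \pi_i(\gates_i)$, giving a typical one-term contribution of order $\pi_i(\gates_i)^2$ and a total of order $S\,\pi_i(\gates_i)^2$; divided by $\pi_i(\gates_i)$ this is $\tent\alpha\log^2 n = o(1)$. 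To make this rigorous uniformly in the graph, I would follow the template of Proposition \ref{prop:quenched-law}: run $K := \lfloor \log^2 n \rfloor$ independent annealed walks started from any $x \in V_i$, bound a $K$-th moment under the joint good events that the walks avoid collisions and that the explored out-neighborhoods have small tree-excess (each of probability $1 - o(n^{-1})$), and close via Markov's inequality together with a union bound over $x$ and $t \le S$. This combinatorial control on the $K$ independent annealed paths together with the sparsity of $\gates_i$ constitutes the technical heart of the argument.
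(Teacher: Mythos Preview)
Your renewal reduction is exactly right and matches the paper: writing $\tilde R_i - 1 = \tilde{\mathbf E}_\partial[M]$ and dominating $M$ by a geometric to obtain $\tilde R_i - 1 \le p_i/(1-p_i)$ with $p_i = \tilde{\mathbf P}_\partial(\tilde\tau_\partial^+ \le \tilde T_i) = \quench^{G_i}_{\mub}(\bar\tau_{\gates_i}^+ \le \tilde T_i)$ is precisely the content of the paper's one-line proof. Where you diverge is in how you bound $p_i$.

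The paper does not split at $S$ or invoke any moment argument: it simply observes that, by Lemma~\ref{lemma:T} together with $\log(1/\min_{x}\tilde\pi_i(x)) = (1+\op(1))\log n$, one has $\tilde T_i \le 6\,\tent\log^2 n < \log^3 n$ w.h.p., and then applies Lemma~\ref{lemma:key} directly to get $p_i \le \quench^{G_i}_{\mub}(\bar\tau_{\gates_i}^+ \le \log^3 n) = \op(n^{-1/3})$. That is the whole proof. Lemma~\ref{lemma:key} is stated and proved just before this lemma precisely so that it can be cited here and in the proof of Lemma~\ref{lemma:T}.

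Your alternative route---union bound over $t$, change of measure to $\pi_i$, and split at $S$---is sound for the range $t \ge S$, where Corollary~\ref{coro:mixing} indeed gives the $\tilde T_i\,\pi_i(\gates_i)$ bound you state. But the short-time piece you leave as a sketch is not a minor detail: it is the entire technical content of Lemma~\ref{lemma:key}. In particular, the starting measure $\mub$ is \emph{random} (it depends on $\pi_i$, hence on the whole graph $G_i$), so the annealed $K$-th moment argument you outline cannot be run from a fixed $x$; one must first decouple $\mub$ from the graph, e.g.\ by conditioning on the degree data and restricting to the class $\mathcal{M}$ of nearly-uniform measures on $\gates_i$, exactly as in the proof of Lemma~\ref{lemma:key}. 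So your ``technical heart'' would amount to reproving that lemma inline. There is no error, but you are duplicating work that the paper has already isolated as a separate lemma for reuse.
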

\begin{proof}[Proof of Proposition \ref{prop:rate}]
Plugging the estimates in Lemmas \ref{lemma:pi-gates}, \ref{lemma:T}, and \ref{lemma:R-new} into Theorem \ref{thm:aldous} (and recalling Theorem \ref{thm:pi-char} and Proposition \ref{prop:concentration-degrees}), Proposition \ref{prop:rate} follows at once.
\end{proof}
To conclude, we just need to prove the two lemmas. To this aim and to ease the notation, in what follows, we will call 
\begin{equation} \label{eq:5-26}
	\mub(x)=\frac{\pi_i(x)\one_{x\in\gates_i}}{\pi_i(\gates_i)}\,,\qquad x\in V_i\,,
\end{equation}
the restriction of $\pi_i$ to $\gates_i$ and we set
\begin{equation}\label{eq:muout}
	\muout(x)=\mub P_i(x)\,,\qquad x\in V_i\,.
\end{equation}
The latter is the distribution after one step starting at a random gate sampled with probability proportional to $\pi_i$.
With this notation

 \begin{equation}
	\tilde{R}_i=\sum_{t=0}^{\tilde{T}_i}{\quench}^{G_i}_{\mub}(\bar X_t\in\gates_i)=1+\sum_{t=1}^{T_i}{\quench}^{G_i}_{\mub}(\bar X_t\in\gates_i)\,,
\end{equation}
where $(\bar X_t)_{t\ge 0}$ is the process in which the transition probabilities out of any vertex in $\gates_i$ are set to $\muout$, and the other transition probabilities are the same as $\tilde X$ (and $X$).
Notice that $\tilde X$ is a projection of $\bar X$, and clearly, 
\begin{equation}
	{\quench}^{G_i}_{\mub}(\bar X_t\in\gates_i)={\quench}^{G_i}_{\partial}(\tilde X_t=\partial)\,,\qquad \forall t\ge 0\,.
\end{equation}
The proofs of Lemmas \ref{lemma:T} and \ref{lemma:R-new} rely on the following technical estimate, which will be immediately proved.
\begin{lemma}\label{lemma:key}
	Let
	\begin{equation}\label{eq:bar-tau}
		\bar\tau_{\gates_i}=\inf\left\{t\ge 0\mid \bar X_t\in\gates_i \right\}\qquad\text{and}\qquad 	\bar\tau_{\gates_i}^+=\inf\left\{t\ge 1\mid \bar X_t\in\gates_i \right\}\,.
	\end{equation}
	Then
\begin{equation}\label{eq:return-to-gates}
	\max_{i \le m}{\quench}^{G_i}_{\mub} (\bar\tau_{\gates_i}^+\le \log(n)^3)=\op\big(n^{-1/3}\big)\,.
\end{equation}
\end{lemma}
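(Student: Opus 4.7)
The plan is to bound $\quench^{G_i}_{\mub}(\bar\tau^+_{\gates_i}\le T)$, with $T:=\lfloor\log^3 n\rfloor$, via a first-moment argument over the joint law of graph and gate-coloring, followed by Markov's inequality. First I exploit the \emph{regenerative} structure of $\bar X$: because $\bar X_1\sim\muout=\mub P_i$ irrespective of $\bar X_0\in\gates_i$, and because $\bar X$ coincides with the unmodified walk on $G_i$ until it re-enters $\gates_i$, a union bound yields
\begin{equation}\label{eq:bound-sum-plan}
\quench^{G_i}_{\mub}(\bar\tau^+_{\gates_i}\le T)\;\le\;\quench^{G_i}_{\muout}(\tau_{\gates_i}\le T-1)\;\le\;\sum_{t=1}^{T}\mub P_i^{\,t}(\gates_i).
\end{equation}
By Markov's inequality, it suffices to show that the $\p$-expectation of the right-hand side, restricted to a suitable high-probability event $\mathcal{E}$, is $o(n^{-1/3})$.

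Let $\mathcal{E}$ be the intersection of the events in Theorem~\ref{thm:pi-char}, Proposition~\ref{prop:concentration-degrees}, and Lemma~\ref{lemma:pi-gates}. On $\mathcal{E}$, uniformly in $x\in V_i$ one has $\pi_i(x)\le C/n$, $\p(x\in\gates_i\mid G_i)\le C\alpha\log n$, and $\pi_i(\gates_i)\ge c\alpha\log n$ for some constants $c,C>0$. Therefore, on $\mathcal{E}$,
\begin{equation*}
\mub P_i^{\,t}(\gates_i)\;\le\;\frac{1}{c\alpha\log n}\sum_{x,y\in\gates_i}\pi_i(x)\,P_i^{\,t}(x,y).
\end{equation*}
Conditionally on $G_i$, the gate indicators $(\one_{x\in\gates_i})_{x\in V_i}$ are independent Bernoullis (each edge is rewired independently) and independent of $\pi_i$ and $P_i^{\,t}$. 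Isolating the diagonal contribution,
\begin{equation*}
\E_{\text{gates}}\!\left[\sum_{x,y\in\gates_i}\pi_i(x)P_i^{\,t}(x,y)\;\Big|\;G_i\right]\;\le\;(C\alpha\log n)^{2}\sum_{x\ne y}\pi_i(x)P_i^{\,t}(x,y)\,+\,C\alpha\log n\cdot r(t),
\end{equation*}
where $r(t):=\sum_x\pi_i(x)P_i^{\,t}(x,x)$, and $\sum_{x\ne y}\pi_i(x)P_i^{\,t}(x,y)\le 1$.

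The diagonal $r(t)$ is the main obstacle, and I bound it through the cycle-free estimate that is already implicit in the proof of Lemma~\ref{lemma:annealed-law}: for $t\ll\sqrt n$, $\pan_x(\mathcal{C}_t^c)=O(t^2/n)$, and on $\mathcal{C}_t$ the walk visits distinct vertices, so $X_t\ne x$ for every $t\ge 1$. Hence $\E[P_i^{\,t}(x,x)]=\pan_x(X_t=x)\le\pan_x(\mathcal{C}_t^c)=O(t^2/n)$, which combined with $\pi_i(x)\le C/n$ on $\mathcal{E}$ gives
\begin{equation*}
\E[r(t)\,\one_{\mathcal{E}}]\;\le\;\sum_x\tfrac{C}{n}\,\E[P_i^{\,t}(x,x)]\;=\;O(t^2/n).
\end{equation*}
Summing over $t\in\{1,\dots,T\}$ and assembling,
\begin{equation*}
\E\!\left[\quench^{G_i}_{\mub}(\bar\tau^+_{\gates_i}\le T)\,\one_{\mathcal{E}}\right]\;=\;O(T\alpha\log n)\,+\,O(T^3/n).
\end{equation*}
In the strongly supercritical regime $\alpha\lesssim\log^2 n/\sqrt n$ one has $T\alpha\log n\lesssim\log^6 n/\sqrt n$ and $T^3/n\lesssim\log^9 n/n$, both $o(n^{-1/3})$. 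Markov's inequality, together with $\p(\mathcal{E}^c)=o(1)$, then yields the claim uniformly in $i\le m$ (recall $m=O(1)$). The hardest step is the control of $r(t)$ for small $t$, which crucially uses the asymptotic absence of short cycles in a sparse random digraph.
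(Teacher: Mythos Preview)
Your proof is correct and takes a genuinely different route from the paper's.

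\textbf{Your approach.} You condition on $G_i$ and exploit that, given $G_i$, the gate indicators $(\one_{x\in\gates_i})_x$ are independent Bernoullis, independent of $\pi_i$ and $P_i$. After the union bound \eqref{eq:bound-sum-plan} and expanding $\mub$, the gate-expectation splits into an off-diagonal term $O(T\alpha\log n)$ and a diagonal term governed by the stationary return probability $r(t)=\sum_x\pi_i(x)P_i^t(x,x)$, which you control via the cycle-free annealed estimate $\E[P_i^t(x,x)]\le\pan_x(\mathcal{C}_t^c)=O(t^2/n)$.

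\textbf{The paper's approach.} The paper conditions instead on the degree-and-marking data $\sigma$ (a configuration-model representation of $G_i$), which already determines $\gates_i$ but leaves the head--tail matching random. It first reduces from $\mub$ to the uniform measure on $\gates_i$ via the class $\mathcal{M}$ of ``almost-uniform'' measures, and then bounds the annealed probability, over matchings, that the walk hits $\gates_i$: at each fresh step the chance of landing in $\gates_i$ is at most $\sum_{x\in\gates_i}D_{x,i}^-/(\sum_{x}D_{x,i}^- - t)=O(\alpha\log n)$.

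\textbf{Comparison.} Both arguments anneal over residual randomness and finish with Markov's inequality, but over different layers: you average over the gate-coloring with $G_i$ fixed; the paper averages over the edge-matching with degrees and gates fixed. Your route is more elementary in that it avoids the configuration-model machinery, at the price of needing the return-probability input. The paper's route avoids the diagonal issue entirely but must set up the matching-annealed walk and the reduction through $\mathcal{M}$.

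\textbf{One small fix.} Your event $\mathcal{E}$ includes Lemma~\ref{lemma:pi-gates} and hence depends on the gates, so you cannot pull $\one_{\mathcal{E}}$ through $\E_{\text{gates}}[\cdot\mid G_i]$. The clean repair is to use $\mathcal{E}$ only to pass from $\mub P_i^t(\gates_i)$ to $(c\alpha\log n)^{-1}\sum_{x,y\in\gates_i}\pi_i(x)P_i^t(x,y)$, and then enlarge to the $G_i$-measurable event $\mathcal{E}'\supset\mathcal{E}$ (from Theorem~\ref{thm:pi-char} and Proposition~\ref{prop:concentration-degrees}) via $\one_{\mathcal{E}}\le\one_{\mathcal{E}'}$ before taking the gate-expectation; the bound $\p(x\in\gates_i\mid G_i)\le C\alpha\log n$ only needs $D_x^+\le C_2\lambda\log n$, which is in $\mathcal{E}'$.
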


\begin{proof}[Proof of Lemma \ref{lemma:key}]
		We consider the following partial construction of $G_i$:
		\begin{enumerate}
			\item For each $x\in V_i$, sample a random variable $D_x^+$ with distribution ${\rm Bin}(n-1,p)$.
         	\item Attach, to each $x\in V_i$, $D_x^+$ tails of arrows.
			\item To each tail, attach a ${\rm Bern}(\alpha)$ random variable and call \emph{marked} a tail for which such a random variable is $1$.
         
		\end{enumerate}
		We call $\Sigma$ the set of all possible realizations generated by the randomness just described.  
Notice that, in order to end up with a (sub-)graph having the correct distribution, it is enough to select, for each vertex $x\in V_i$ with $D_x^+$ tails the same number of (distinct) vertices in $V_i\setminus\{x\}$. 
Notice also that the set of gates is fully determined by $\Sigma$, since it coincides with the set of vertices having at least one marked tail.

		Let $\mathcal{M}$ be the set of probability measures on $\gates_i$ defined as
        \begin{equation} \label{eq:M}
			\mathcal{M}=\left\{\mu\in\mathcal{P}(\gates_i)\mid \max_{x,y\in\gates_i}\frac{\mu(x)}{\mu(y)}\le C \right\}\,,
		\end{equation}
		for some bounded $C>0$ that depends on $C_1, C_2$ in Proposition \ref{prop:concentration-degrees}. It holds
		\begin{equation}
			\label{eq:goal-new-2}
			\p(\mub\not\in\mathcal{M})=o(1)\,.
		\end{equation}
		where $\mub$ is the probability measure defined in Eq.~\eqref{eq:5-26}.
		This is an immediate consequence of Theorem \ref{thm:pi-char} and Lemma \ref{lemma:pi-gates}.
		Let $\mathcal{F}\subseteq\Sigma$ be the set of realizations such that the event in Proposition \ref{prop:concentration-degrees} concerning $(D_x^+)_{x\in V_i}$ is satisfied, and such that
		$\frac12 n\alpha\lambda\log(n)\le |\gates_i|\le 2n\alpha\lambda\log(n)\,.$
		Thanks to Proposition \ref{prop:concentration-degrees} and Eq. \eqref{eq:card-gates} in the proof of Lemma \ref{lemma:pi-gates}, it holds 
		\begin{equation} \label{eq:F-complement}
			\p(\mathcal{F}^\complement)=o(1).
		\end{equation}
		We will later show that
		\begin{equation}
			\label{eq:goal-new-1}
			\max_{\sigma\in\mathcal{F}}	\p\left(\max_{\mu\in\mathcal{M}}{\quench}^{G_i}_{\mu} (\bar\tau_{\gates_i}^+\le \log(n)^3) > n^{-1/3}\mid \sigma\right)=o(1)\,,
		\end{equation}
		but let us first point out how the desired result can be derived from  \eqref{eq:goal-new-1}:
		\begin{equation*}\label{eq:518}
			\begin{split}
				\p&\left({\quench}^{G_i}_{{\mub}} (\bar\tau_{\gates_i}^+\le \log(n)^3)>n^{-1/3}\right)\\
				&\le	\p\left({\quench}^{G_i}_{{\mub}} (\bar\tau_{\gates_i}^+\le \log(n)^3) >n^{-1/3} , \mathcal{F}\right)+\p(\mathcal{F}^\complement)\\
				&\le	\p\left({\quench}^{G_i}_{{\mub}} (\bar\tau_{\gates_i}^+\le \log(n)^3) >n^{-1/3} , \mathcal{F}, \mub\in\mathcal{M}\right)+\p(\mub\not\in\mathcal{M})+\p(\mathcal{F}^\complement)\\
				&\le \max_{\sigma\in\mathcal{F}}	\p\left(\max_{\mu\in\mathcal{M}}{\quench}^{G_i}_{{\mu}} (\bar\tau_{\gates_i}^+\le \log(n)^3) > n^{-1/3}\mid \sigma\right)+\p(\mub\not\in\mathcal{M})+\p(\mathcal{F}^\complement)
				=o(1)\,.
			\end{split}
		\end{equation*}
		The three terms on the \rhs of the latter display vanish due to \eqref{eq:goal-new-1}, \eqref{eq:goal-new-2}, and \eqref{eq:F-complement}, respectively. 
      This would complete the proof of \eqref{eq:return-to-gates}.
		
		We are left to prove \eqref{eq:goal-new-1}. Observe that if $\mu \in \mathcal{M}$, then $\max_{x \in \gates_i}\mu(x)\le \min_{y \in \gates_i}\mu(y)C\le C/|\gates_i|$. As a consequence, for any $\sigma\in\mathcal{F}$ and all $n$ large enough, so that $C < \log(n)$,
		\begin{equation}\label{eq:522}
			\p\left(\max_{\mu\in\mathcal{M}}{\quench}^{G_i}_{{\mu}} (\bar\tau_{\gates_i}^+\le \log(n)^3) > n^{-1/3}\mid \sigma\right)\le \p\left(\sum_{x\in\gates_i}\frac{{\quench}^{G_i}_{{x}} (\bar\tau_{\gates_i}^+\le \log(n)^3)}{|\gates_i|} > \frac1{n^{1/3}\log(n)}\mid \sigma\right)\,.
		\end{equation}
		Write $\mu_u$ for the uniform distribution on $\gates_i$, and consider the chain $(\bar{\bar{X}})_{t\ge0}$ in which, when visiting $\gates_i$, the chain is instantaneously set at some vertex in $\gates_i$ u.a.r.. Clearly,
		\begin{equation}\label{eq:541}
			{\quench}^{G_i}_{\mu_u}(\bar\tau_{\gates_i}^+\le \log(n)^3)=\quench^{G_i}_{\mu_u}(\bar{\bar\tau}_{\gates_i}^+\le \log(n)^3)\,,
		\end{equation}
		where $\bar{\bar{\tau}}_{\gates_i}^+$ is the analogue of $\bar{\tau}_{\gates_i}^+$ for the chain $(\bar{\bar X}_{t})_{t\ge 0}$.
		By Markov's inequality, 
		\begin{equation}\label{eq:517b}
			\max_{\sigma\in\mathcal{F}}	\p\left(\quench^{G_i}_{{\mu_u}} (\bar{\bar{\tau}}_{\gates_i}^+\le \log(n)^3) > \frac1{n^{1/3}\log(n)}\mid \sigma\right) \le\log(n)n^{1/3}\max_{\sigma\in\mathcal{F}} \E[\quench^{G_i}_{{\mu_u}} (\bar{\bar{\tau}}_{\gates_i}^+\le \log(n)^3)\mid \sigma]\,.
		\end{equation}
		Call $\pan_{\sigma}$ the law of the annealed walk conditioned to $\sigma$ and with starting distribution $\mu_u$, which is defined as follows:
		\begin{enumerate}
			\item[(i)] Select an element of $\gates_i$ u.a.r.;
   			\item[(ii)] select one of its tails u.a.r., match it to a vertex u.a.r.~in $V_i$ and move the random walk to such a vertex;
			\item[(iii)] as soon as the vertex currently visited by the walk is not in $\gates_i$ continue in this fashion, but if the selected random tail  is already matched, then simply follow the edge while, if it is not matched, match it to a vertex chosen u.a.r.~among those that are not currently connected to the present vertex yet, and move the random walk accordingly;
			\item[(iv)] if instead the vertex visited by the walk at time $r\ge 1$ is some $v\in\gates_i$, then select a vertex $w\in\gates_i$ with probability $\mu_u$, one of its tail u.a.r., and move the random walk as described above. 
            In this case, set $\bar{\bar{\tau}}_{\gates}^{+,\rm an}=r$.
		\end{enumerate}
		Notice that, for any $\sigma$,
		\begin{equation}\label{eq:543}
			\E[\quench^{G_i}_{{\mu_u}} (\bar{\bar{\tau}}_{\gates_i}^+\le \log(n)^3)\mid \sigma]=\pan_{\sigma}(\bar{\bar{\tau}}_{\gates}^{+,\rm an}\le \log(n)^3 )\,.
		\end{equation}
		              		Let $\mathcal{A}_t$ be the event in which there exists two distinct times $s,s'\in[1,t]$ such that the random walk visits the same vertex at $s$ and $s'$.
        Since we are focusing on $\sigma\in\mathcal{F}$, for $t= \log(n)^3$, and large $n$, it holds
		\begin{equation}\label{eq:525}
			\begin{split}
				\pan_{\sigma}(\bar{\bar{\tau}}_{{\gates_i}}^{+,\rm an}\le t)&\le
                \pan_{\sigma}(\bar{\bar{\tau}}_{{\gates_i}}^{+,\rm an}\le t, \mathcal{A}_t^\complement)+\pan_{\sigma}(\mathcal{A}_t)\\
				&\le t\frac{|\gates_i|}{n}+ \frac{t^2}n
				\le \frac{2n\alpha \lambda\log(n)^4+\log(n)^6}{n} =o(n^{-1/2}\log^7(n))\,,
			\end{split}
		\end{equation}
 		and therefore \eqref{eq:goal-new-1} follows at once by \eqref{eq:522}, \eqref{eq:541}, \eqref{eq:517b}, \eqref{eq:543} and \eqref{eq:525}.       
\end{proof}
We can now prove Lemmas \ref{lemma:T} and \ref{lemma:R-new}.
\begin{proof}[Proof of Lemma \ref{lemma:T}] Consider again the process $(\bar X_t)_{t\ge 0}$ in which the transition probabilities out of any vertex in $\gates_i$ are given by the distribution $\muout$ defined in Eq.~\eqref{eq:muout}, and denote with $\bar P_i$ its transition kernel.
Called $\bar\pi_i$ the stationary distribution associated to $\bar P_i$, and defined
	\begin{equation}\label{eq:def-bar-tmix}
		\bar{t}_{{\rm mix},i}= \inf\left\{t\ge 0\,\bigg\rvert \max_{x\in  V_i}\|\bar{P}_i^t(x,\cdot)-\bar{\pi}_i \|_{\rm TV}\le  (2{\rm e})^{-1} \right\}\,,
	\end{equation}
	it is immediate that $\tilde{t}_{{\rm mix},i}\le \bar{t}_{{\rm mix},i}$, since the process $\tilde X$ is a projection of $\bar X$. Notice also that, thanks to Lemma \ref{lemma:pi-gates},
	\begin{equation}\label{eq:pi-pibar}
		\left\|\bar{\pi}_i-\pi_i\right\|_{\rm TV}\le \pi_i(\gates_i)=\op(1)\,.
	\end{equation}
	
We are interested in bounding, for ${S}=3\tent\log(n)$,
\begin{equation}\label{eq:mix1}
	\max_{x \in V_i}\left\|\bar{P}_i^{2S}(x,\cdot)-\bar\pi_i\right\|_{\rm TV}\,.
\end{equation}

Let now, for $A \subset V_i $ and $x \in V_i$,
\begin{equation}
\begin{split}
    \mathcal{O}_{x}(A)
    &\coloneqq\left|{\quench}^{G_i}_x(\bar X_{2S}\in A,\bar{\tau}_{\gates_i}\le 2S)-\quench^{G_i}_x( X^i_{2S}\in A,{\tau}_{\gates_i}\le 2S) \right|\\
    &=\left|\sum_{s=0}^{2S} \sum_{y\in\gates_i}{\quench}^{G_i}_x(\bar{\tau}_{\gates_i}=s,\bar X_s=y)\left[{\quench}^{G_i}_{\muout}(\bar X_{{2S}-s}\in A)-\quench^{G_i}_y(X^i_{{2S}-s}\in A) \right]\right|.
\end{split}
\end{equation}
 Since $\bar X$ can be perfectly coupled, up to the first hitting of the set $\gates_i$, with the simple random walk on $G_i$, that in this proof we explicitly denote by $X^i$
 to avoid confusion, we have
 \begin{equation}\label{eq:mix2}
 	\begin{split}
 	\max_{x \in V_i}\left\|\bar{P}_i^{2S}(x,\cdot)-\bar\pi_i\right\|_{\rm TV}&\le\max_{x\in V_i}\|P_i^{2S}(x,\cdot)-\pi_i \|_{\rm TV}+\max_{x\in V_i}\max_{A\subset V_i}\mathcal{O}_{x}(A)+\op(1)\\
 	&=\max_{x\in V_i}\max_{A\subset V_i}\mathcal{O}_{x}(A)+\op(1)\,,
 	\end{split}
 \end{equation}
 where we used Corollary \ref{coro:mixing}.
It holds
\begin{equation}\label{eq:OxA}
 	\begin{split}
 	\mathcal{O}_{x}(A)\le \mathcal{O}^{\rm small}_x(A)+\mathcal{O}^{\rm large}_x(A)\,,
 \end{split}
 \end{equation}
 where
 \begin{equation*} \begin{split}
\mathcal{O}^{\rm small}_x(A) \coloneq \left|\sum_{s=0}^{S} \sum_{y\in\gates_i}{\quench}^{G_i}_x(\bar{\tau}_{\gates_i}=s,\bar X_s=y)\left[{\quench}^{G_i}_{\muout}(\bar X_{{2S}-s}\in A)-\quench^{G_i}_y(X^i_{{2S}-s}\in A) \right]\right|, \\
 	\mathcal{O}^{\rm large}_x(A)\coloneq\left|\sum_{s=S}^{2S} \sum_{y\in\gates_i}\quench^{G_i}_x(\bar{{\tau}}_{\gates_i}=s, \bar{X}_s=y)\left[{\quench}^{G_i}_{\muout}(\bar X_{{2S}-s}\in A)-\quench^{G_i}_y(X^i_{{2S}-s}\in A) \right]\right|.
     \end{split}
     \end{equation*}
 On the one hand, for what concerns $\mathcal{O}^{\rm large}_x(A)$,
 since $\bar{X}$ is coupled with $X^i$ up to time $\bar{\tau}_{\gates_i}$, bounding the quantity between square brackets by $2$, using Corollary \ref{coro:mixing} and Lemma \ref{lemma:pi-gates} it follows, for large $n$,
 \begin{equation}\label{eq:OxAlarge}
 	\begin{split}
 	\max_{x\in V_i}\max_{A\subset V_i}\mathcal{O}^{\rm large}_x(A)
    &\le 	2\max_{x\in V_i}\sum_{s=S}^{2S}\quench^{G_i}_x(X^i_s\in\gates_i)\\
 	&
 	\le 5{S}\pi_i(\gates_i)=\op(1)\,.
 	\end{split}
 \end{equation}
 On the other hand,  $\mathcal{O}^{\rm small}_x(A)$ can be bounded as follows: for any $A\subset V_i$
  \begin{equation}\label{eq:OxAsmall1}
 	\begin{split}
 		\max_{x\in V_i}\mathcal{O}^{\rm small}_x(A)&\le 	\max_{y\in\gates_i}\max_{s\le S}\left|{\quench}^{G_i}_{\muout}(\bar X_{{2S}-s}\in A)-\quench^{G_i}_y(X^i_{{2S}-s}\in A) \right|\\
 		&\le 	\max_{y\in\gates_i}\max_{s\le S}\left|{\quench}^{G_i}_{\muout}(\bar X_{{2S}-s}\in A)-\bar{\pi}_i(A)\right|+\\
 		&\qquad	+\max_{y\in\gates_i}\max_{s\le S}\left|{\quench}^{G_i}_y( X^i_{{2S}-s}\in A)-\pi_i(A) \right|+|\pi_i(A)-\bar{\pi}_i(A)|\\
 		&=	\max_{s\le S}\left|{\quench}^{G_i}_{\muout}(\bar X_{{2S}-s}\in A)-\bar{\pi}_i(A)\right|+\op(1)\,,
 	\end{split}
 \end{equation}
 where we added and subtracted $\pi_i(A) +\bar{\pi}_i(A)$, used twice the triangle inequality, Corollary \ref{coro:mixing} and \eqref{eq:pi-pibar}. Taking the maximum over $A\subset V_i$, we get
 \begin{equation}\label{eq:OxAsmall2}
 		\begin{split}
 		\max_{A\subset V_i}\max_{x\in V_i}\mathcal{O}^{\rm small}_x(A)&\le 
        \max_{s\le S}\left\|\muout\bar{P}_i^{{2S}-s}-\bar{\pi}_i\right\|_{\rm TV}+\op(1)\\
 		&\le \max_{s\le S}\left\|\muout  P_i^{2S-s}-\pi_i \right\|_{\rm TV}+\op(1)=\op(1)\,
 	\end{split}
 \end{equation}
 where for the last equality we simply applied Theorem \ref{thm:cutoff}, while for the second inequality, we added and subtracted $\muout P_i^{2S-s}-\pi_i$, used the triangle inequality, and used that,
 thanks to  Lemma \ref{lemma:key},
 \begin{equation}\label{eq:OxAsmall3}
 	\begin{split}
 		\max_{s\le S}\max_{A\subset V_i}\left|\sum_{y\in V_i}\muout(y)\left( \bar P_i^{2S-s}(y,A)-P_i^{2S-s}(y,A)\right) \right|&\le {\quench}^{G_i}_{\muout}(\bar{\tau}_{\gates_i}<{2S})=\op(1)\,.
 	\end{split}
 \end{equation}
At this point, the desired result, Eq.~\eqref{eq:est-T},
follows by putting together \eqref{eq:mix2}, \eqref{eq:OxA}, \eqref{eq:OxAlarge}, \eqref{eq:OxAsmall1}, \eqref{eq:OxAsmall2} and \eqref{eq:OxAsmall3}.
\end{proof}
	\begin{proof}[Proof of Lemma \ref{lemma:R-new}]
Follows at once by Lemmas \ref{lemma:T} and \ref{lemma:key} and the fact that, by Theorem \ref{thm:pi-char} and Proposition \ref{prop:concentration-degrees},
\begin{equation}
	\log\bigg(1/\min_{x\in \tilde V_i}\tilde{\pi}_i\bigg)=(1+\op (1))\log(n)\,.\qedhere
\end{equation}
\end{proof}

\begin{remark}\label{rmk:muout}
 In the following, it will be useful to consider the transition matrix $\hat P_i$ of a SRW on $V_i$, where the external out-edges of $\mathcal{G}_i$ (i.e, pointing towards another community) are canceled instead of being rewired. 
	Notice that the very same conclusion of Lemma \ref{lemma:key}, can be obtained replacing $P_i$, with $ \hat P_i$ to define the modified random walk $\bar X$. 
    Indeed, the only point at which the argument differs is the step (iv) in the definition of the annealed walk (right below \eqref{eq:517b}): instead of selecting a tail of $w\in\gates_i$ u.a.r., one has to select u.a.r.\ a \emph{non-marked} tail of $w\in\gates_i$. 
    Nevertheless, up to the first return to the set $\gates_i$, the two processes coincide. \\
    Notice also that the property of belonging to $\mathcal{M}$ in \eqref{eq:M} is stable under small perturbations. Then, Lemma \ref{lemma:key} holds for any approximation $\check{\mu}$ of $\mub$ (which is defined in \eqref{eq:5-26}) such that $\sup_{x \in \gates_i} \check{\mu}(x)/{\mub(x)}$ is \whp bounded. 
    Later, our choice of $\check \mu$ we will be the restriction to $\gates_i$ of the distribution $P_i^{s}(x,\cdot)$, for some $s \ge \log(n)^2$ and $x \in V_i$.
Indeed, Corollary \ref{coro:mixing} ensures that $P_i^{s}(x,\cdot)$ can be \whp approximated in $\ell^\infty$-norm by $\pi_i$, and then, on this event, its restriction to $\gates_i$ is near to $\mub$ in $\ell^\infty$-norm.
\end{remark}
\subsection{The coupling}\label{suse:coupling}
For an arbitrary initial state $x\in V_i$, we will now consider a coupled construction of the simple random walk on $G$ started at $x$, $(X_t)_{t\ge 0}$, and a toy process $(Y_t)_{t\ge0}$, which is related to the quasi-stationary behavior of the random walk out of $\gates_i$.
\begin{definition}\label{def:coupling}
We consider the joint Markov processes $(X_t,Y_t)_{t\ge0}$:
\begin{enumerate}
	\item On the one hand, marginally $(X_t)_{t\ge 0}$ is the simple random walk on $G$ started at some $x\in V_i$;
	\item on the other hand, marginally $(Y_t)_{t\ge 0}$ is a random walk on $G_i$, started at the quasi-stationary distribution $\muqs$, such that, when it hits $\gates_i$, at the next step $Y$ is reinitialized at  $\muqs P_i$.  
	Moreover, we enrich such a process by appending a mark, $(\kappa_t,\rho_t)$, defined as follows: we start by setting $(\kappa_0,\rho_0)=(0,0)$ and, whenever at some time $t>0$, $Y_t=v$, for some $v \in\gates_i$, we set $\kappa_t=\kappa_{t-1}+1$ and toss a coin with probability of success $O_v^+/D_v^+$: if it is a success, we set $\rho_t=\rho_{t-1}+1$, otherwise $\rho_t=\rho_{t-1}$.
\end{enumerate}
We couple the two processes as follows: the coupling is articulated in \emph{stages}, and each stage is made by two steps each, (A) and (possibly) (B), where
\begin{itemize}
	\item[(A)] we use the optimal coupling between $P_i^t(x,\cdot)$ and $\muqs P_i^t(\cdot)$ up to the first time $t$ such that:
	\begin{itemize}
		\item[(i)] either $X_t=Y_t$; in this case, we then let the two walks evolve following the same path up to the hitting time of the set $\gates_i$.
		\item[(ii)] or $X_t\neq Y_t$ and $Y_t\in \gates_i$; in this case we declare the coupling as {\em failed} and continue the construction of the two processes independently.
		\item[(iii)] or $X_{t}\neq Y_{t}$, and $X_t$ traverses a rewired edge; in this case, we declare the coupling as {\em failed} and continue the construction of the two processes independently.
	\end{itemize}
	\item[(B)] In case (i), call $v\in\gates_i$ the vertex visited by the two processes and:
	\begin{itemize}
		\item[(iv)] Toss a coin with success probability $O_v^+/D_v^+$:
		\begin{itemize}
			\item if it results in a head,
			we say that the random walks $X$ traverses a rewired edge of $v$ u.a.r., declare the coupling as \emph{successful} and continue the construction of the two processes independently;
			\item  if it results in a tail, then let the random walk $X$ choose one of the internal out-edges of $v$ u.a.r., and let the random walk $Y$ choose a new starting point according to $\muqs P_i$.
			After that, repeat stage (A).
		\end{itemize}
	\end{itemize}
\end{itemize}
We will call $\hat\quench^{G_i}_{x,\muqs}$ the law of the coupling just described. 
\end{definition}
\begin{proposition}\label{prop:coupling}
	Let $\mathcal{S}$ denote the event that the coupling of law $\hat\quench^{G_i}_{x,\muqs}$ is successful. Then
	\begin{equation}
		\max_{i \le m}\max_{x\in V_i}\hat\quench^{G_i}_{x,\muqs}(\mathcal{S})=1-\op(1)\,.
	\end{equation}
\end{proposition}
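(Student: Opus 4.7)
The plan is to realize the coupling as a sequence of Bernoulli trials (stages), each of which terminates in one of three outcomes---success (heads in step (iv)), failure (case (ii) or (iii) inside stage (A)), or restart (tails in step (iv))---and to show that on a high-probability graph event the per-stage failure probability $\eta$ is much smaller than the per-stage success probability $\mu$. A standard race-type bound then yields
\begin{equation*}
\hat\quench^{G_i}_{x,\muqs}(\mathcal{S}^c) \;\le\; \sum_{k\ge 1}\eta(1-\eta-\mu)^{k-1} \;\le\; \frac{\eta}{\mu}\,,
\end{equation*}
so the goal reduces to showing $\eta/\mu=\op(1)$ uniformly in $x\in V_i$ and in $i\le m$.

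Fix the horizon $S=3\tent\log(n)$. Per-stage failure splits in three sources. First, the Doeblin coupling in stage (A) fails to produce $X_t=Y_t$ within $S$ steps: by Corollary~\ref{coro:mixing} and the triangle inequality, $\|P_i^S(x,\cdot)-\muqs P_i^S\|_{\rm TV}=\op(n^{-1})$ uniformly in $x$, so this source contributes $\op(n^{-1})$. Second, case (ii) occurs; since $Y$ starts at $\muqs$ (or $\muqs P_i$ after a restart) and the hitting time of $\gates_i$ from $\muqs$ is exactly geometric of rate $\mathfrak{l}_i=(1+\op(1))\lambda\alpha\log(n)$ by Proposition~\ref{prop:rate}, this probability is at most $S\mathfrak{l}_i=(3+\op(1))\tent\lambda\alpha\log^2(n)$, which is $\op(1/\log(n))$ in the strongly supercritical regime $\alpha\lesssim\log^2(n)/\sqrt{n}$. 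Third, case (iii) occurs; for this one would re-examine the argument of Proposition~\ref{prop:jump-time} with the sharper inputs available in our regime---in particular $\pan_x(\tauj\le S)\le S\alpha=\op(1/\log(n))$---to upgrade its output from $\op(1)$ to $\op(1/\log(n))$ uniformly in $x$. All three contributions combine to give $\eta=\op(1/\log(n))$.

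For the success probability, after a meeting in case (i) the two walks evolve together on $G_i$---which is legal, as vertices outside $\gates_i$ have no rewired out-edges---until hitting $\gates_i$ at some vertex $v$; the coin in step (iv) returns heads with probability $O_v^+/D_v^+\ge 1/D_v^+$, and by Proposition~\ref{prop:concentration-degrees} $D_v^+\le C_2\lambda\log(n)$ uniformly \whp, hence $\mu\ge 1/(C_2\lambda\log(n))$ on a high-probability graph event. Substituting into the race bound gives $\eta/\mu=\op(1)$ uniformly, which together with the uniformity in $x$ and $i$ of all estimates above yields the conclusion of Proposition~\ref{prop:coupling}. The main technical obstacle is the quantitative strengthening of Proposition~\ref{prop:jump-time}: the existing $K$-th moment argument with $K=\lfloor\log^2(n)\rfloor$ produces super-polynomial decay $(o(1)/\delta)^K$ for fixed $\delta$, but to allow the choice $\delta=1/\log(n)$ one must verify that each of the items (0)--(3) there contributes at most $o(1/\log(n))$. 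Items (1) and (2) are already much smaller, item (3) yields $S\alpha=\op(1/\log(n))$ as noted, and item (0) can be reinterpreted annealedly as $\le 4\alpha=\op(1/\log(n))$, so the argument goes through.
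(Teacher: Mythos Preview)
Your race-bound strategy is natural, but the claim that the per-stage failure probability satisfies $\eta=\op(1/\log(n))$ \emph{uniformly in the starting point $x$} is false, and this breaks the argument. Indeed, if $x\in\gates_i$ then $\quenchG_x(\tauj=1)=O_x^+/D_x^+\ge 1/(C_2\lambda\log(n))$ \whp by Proposition~\ref{prop:concentration-degrees}, so case (iii) in the very first stage already has probability $\asymp 1/\log(n)$, not $o(1/\log(n))$. Since gates exist \whp, the uniform bound $\max_{x\in V_i}\quenchG_x(\tauj\le S)=\op(1/\log(n))$ simply cannot hold. Your proposed repair of item (0) in Proposition~\ref{prop:jump-time}---``reinterpreting it annealedly as $\le 4\alpha$''---does not work either: in the $K$-th moment argument all $K$ walks start at the \emph{same} vertex $x$, and once the first walk has revealed the out-neighborhood of $x$, the first step of every subsequent walk is governed by the quenched ratio $O_x^+/D_x^+$, not by the annealed rate $\alpha$. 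With $\eta\asymp 1/\log(n)$ and $\mu\asymp 1/\log(n)$, the race bound $\eta/\mu$ yields only $O(1)$.

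The paper handles exactly this obstruction by decoupling the first stage from the rest. For the first stage (arbitrary start $x$), case (iii) is controlled only by Proposition~\ref{prop:jump-time}, giving an $\op(1)$ one-time cost. For each subsequent stage, after a tails-restart the $X$ walk starts from an internal out-neighbor of the visited gate, i.e.\ from the distribution $\muouth$; Lemma~\ref{lemma:key} (via Remark~\ref{rmk:muout}) then gives the much stronger per-stage bound $\op(n^{-1/3})$ for case (iii), while Corollary~\ref{coro:sigma} gives $\op(n^{-1/7})$ for case (ii). Finally, rather than a race bound, the paper controls the total number of stages directly by $\log(n)^{3/2}$ through Proposition~\ref{prop:renewal}, and concludes via a union bound over stages. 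Your approach is salvageable along the same lines: write the overall failure probability as $\eta_1+\eta_{\ge 2}/\mu$, where $\eta_1=\op(1)$ comes from the first stage and $\eta_{\ge 2}$ is the (now polynomially small) per-stage failure from stage~$2$ onward obtained from Lemma~\ref{lemma:key} and Corollary~\ref{coro:sigma}.
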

Before presenting the proof, let us point out that on the event $\mathcal{S}$, the time $\tauj$ at which the random walk does its first inter-community jump coincides with a much simpler random time, namely 
\begin{equation}\label{eq:def-tau-rho}
	\tau_\rho=\inf\{t\ge 0\mid \rho_t>0\}\,.
\end{equation}

Thanks to the fact that $(Y_t)_{t \ge 0}$ is  reinitialized to $\muqs P_i$ any time it hits $\gates_i$, and since $\muqs P_i(\gates_{i})=\mathfrak{l}_i$,
we have that $(\kappa_t-\kappa_{t-1})_{t\ge 1}$ are \iid Bernoulli random variables with parameter $\mathfrak{l}_i$. Therefore,
 the quantities
\begin{equation}\label{eq:def-sigma-k}
	\sigma_k=\inf\{t\ge 0\mid \kappa_t=k\}-\inf\{t\ge 0\mid \kappa_t={k-1}\}\,,\qquad k\ge 1\,,
\end{equation}
are \iid geometrically distributed with parameter $\mathfrak{l}_i$. 
Since, thanks to Proposition \ref{prop:rate}, $\mathfrak{l}_i=(1+\op(1))\alpha\lambda\log(n)$, as an immediate corollary, we have the following estimate, which will be useful later.
\begin{corollary}\label{coro:sigma}
	For every $k\ge 0$ it holds
	\begin{equation}\label{eq:sigma}
		\max_{i \le m}\hat{\quench}^{G_i}_{\muqs}\left(\sigma_k<n^{\frac13} \right)\le n^{-\frac17} \,.
	\end{equation}

\end{corollary}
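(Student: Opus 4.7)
The plan is to combine the exact geometric distribution of $\sigma_k$ (which is guaranteed by the reinitialization of $(Y_t)_{t\ge 0}$ at $\muqs P_i$ after every hit of $\gates_i$ in the construction of Definition \ref{def:coupling}) with the first-order control of the rate $\mathfrak{l}_i$ furnished by Proposition \ref{prop:rate}, and then exploit the fact that in the strongly supercritical regime $\alpha$ is small enough that $n^{1/3}\mathfrak{l}_i$ is a vanishing quantity, in fact much smaller than $n^{-1/7}$.

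\medskip

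First I would recall that, as observed just above the corollary, the increments $(\kappa_t-\kappa_{t-1})_{t\ge 1}$ are i.i.d.\ ${\rm Bern}(\mathfrak{l}_i)$ under $\hat\quench^{G_i}_{\muqs}$, so that each $\sigma_k$, $k \ge 1$, has a $\textup{Geom}(\mathfrak{l}_i)$ distribution. Hence the quenched probability of the event $\{\sigma_k<n^{1/3}\}$ can be bounded by a union bound as
\begin{equation}
\hat{\quench}^{G_i}_{\muqs}\left(\sigma_k < n^{1/3}\right)
= 1-(1-\mathfrak{l}_i)^{\lfloor n^{1/3}\rfloor}
\le n^{1/3}\,\mathfrak{l}_i.
\end{equation}
Next I would invoke Proposition \ref{prop:rate}, which yields that there is an event $\mathcal{H}$ of probability $1-o(1)$ on which $\mathfrak{l}_i \le 2\lambda\alpha\log(n)$ simultaneously for all $i\le m$. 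On $\mathcal{H}$, the previous display and the assumption $\alpha^{-1}\gtrsim \sqrt{n}\log(n)^{-2}$ give
\begin{equation}
\max_{i\le m}\hat{\quench}^{G_i}_{\muqs}\left(\sigma_k<n^{1/3}\right)
\le 2\lambda\alpha\log(n)\,n^{1/3}
\lesssim \lambda\log(n)^{3}\,n^{-1/6}.
\end{equation}
Since $\lambda\asymp 1$ and $\log(n)^3 n^{-1/6}\ll n^{-1/7}$ for all $n$ sufficiently large, the bound $n^{-1/7}$ follows on $\mathcal{H}$.

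\medskip

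There is essentially no obstacle here: the only subtle points are the exact geometric nature of $\sigma_k$ (which was set up precisely by introducing the reinitialization mechanism and the marking process in Definition \ref{def:coupling}) and the a priori control of $\mathfrak{l}_i$ (delivered by Proposition \ref{prop:rate}). The gap between $n^{-1/6}\log(n)^3$ and $n^{-1/7}$ is comfortable, so there is flexibility in the choice of exponent; the statement as phrased is simply the clean form that will be used in the applications that follow.
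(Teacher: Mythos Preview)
Your proof is correct and follows essentially the same route as the paper: both use that $\sigma_k$ is geometric with parameter $\mathfrak{l}_i$, bound $1-(1-\mathfrak{l}_i)^{n^{1/3}}$ by $n^{1/3}\mathfrak{l}_i$ (the paper does this via the asymptotic $1-(1-\mathfrak{l}_i)^{n^{1/3}}\sim n^{1/3}\mathfrak{l}_i$), substitute $\mathfrak{l}_i=(1+o_{\p}(1))\lambda\alpha\log(n)$ from Proposition \ref{prop:rate}, and conclude from the regime assumption $\alpha\lesssim n^{-1/2}\log(n)^{2}$. Your version is slightly more explicit about the high-probability event on which the bound holds, which is appropriate since the statement as written suppresses the implicit ``w.h.p.'' qualifier.
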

\begin{proof} It holds
    $$\hat{\quench}^{G_i}_{\muqs}\left(\sigma_k<n^{\frac 13} \right)=1-(1-\mathfrak{l}_i)^{n^{\frac 13}}\sim 1-\exp{(-n^{\frac13}\alpha\lambda \log n)}.$$
    Since $\alpha \ll n^{-1/2}\log^2(n)$, then the latter decays as $n^{\frac 13}\alpha \log n \ll n^{-\frac17}$.
\end{proof}
Notice that if in stage (B) we had thrown, at each time $t$ that $\kappa_t-\kappa_{t-1}=1$, a coin with success probability $(\lambda\log(n))^{-1}$, then we could immediately deduce that the time of the first success is asymptotically exponentially distributed with rate $\alpha$. Of course, for the process $(Y_t)_{t\ge 0}$, ''the probability of success`` is not $(\lambda\log(n))^{-1}$: it depends on the gate visited by the process (through its out-degree and the number of its rewired out-edges). The next proposition shows that, thanks to a sort of \emph{homogenization property}, the toy model just described  actually captures the correct picture. 
\begin{proposition}\label{prop:renewal}
	Recalling the definition of $\tau_{\rho}$ in \eqref{eq:def-tau-rho}
	\begin{equation}\label{eq:tau-rho}
		\max_{i \le m}\sup_{t \ge 0}\left|\hat{\quench}^{G_i}_{\muqs}(\alpha \tau_\rho> t)-{\rm e}^{-t}\right|=\op(1)\,.
	\end{equation}
\end{proposition}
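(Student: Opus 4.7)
The plan is to observe that the process $(Y_t)_{t\ge 0}$ has a clean renewal structure that reduces $\tau_\rho$ to a \emph{compound geometric} random variable, and then to verify that the resulting parameter is asymptotically $\alpha$ thanks to the independence between the rewiring coins and the graph $G_i$.

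\textbf{Step 1 (renewal structure).} First, I would use Proposition \ref{prop:rate} together with the reset rule for $Y$ to show the following: denote by $T_k$ the $k$-th time $Y_t$ visits $\gates_i$, and set $\sigma_k=T_k-T_{k-1}$ and $v_k=Y_{T_k}$. Then, quenched on the graph, the pairs $(\sigma_k,v_k)$ are i.i.d., with $\sigma_k\sim\mathrm{Geom}(\mathfrak l_i)$ independent of $v_k\sim\nu_i:=(\muqs P_i)|_{\gates_i}/\mathfrak l_i$. The identity for the marginals follows from the quasi-stationarity relation $\muqs[P_i]_{\gates_i}=(1-\mathfrak l_i)\muqs$, which implies that from $\muqs P_i$ each step is an independent $\mathrm{Bern}(\mathfrak l_i)$ trial for hitting $\gates_i$, with hitting law $\nu_i$; the independence across $k$ comes from the reset at every visit.

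\textbf{Step 2 (compound geometric).} Let $K$ be the first index $k$ at which the coin at gate $v_k$ succeeds. Since coins are independent given the $(v_k)$'s, $K$ is quenched geometric with parameter $\bar p_i=\sum_{v\in\gates_i}\nu_i(v)O_v^+/D_v^+$. Writing $\tau_\rho=\sum_{k=1}^{K}\sigma_k$ and computing the probability generating function, the classical identity for geometric sums of geometrics yields that $\tau_\rho$ is itself quenched geometric with parameter
\[
q_i=\mathfrak l_i\bar p_i=\sum_{v\in\gates_i}\muqs P_i(v)\,\frac{O_v^+}{D_v^+}.
\]

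\textbf{Step 3 (homogenization: $q_i=(1+\op(1))\alpha$).} This is the main step. I would condition on $G_i$, which determines $\muqs$, $P_i$ and all the $D_v^+$. By construction of the rewiring, given $G_i$ the random variables $O_v^+$ are independent across $v$ with $O_v^+\mid G_i\sim\mathrm{Bin}(D_v^+,\alpha)$. Hence
\[
\mathbf{E}[q_i\mid G_i]=\sum_v \muqs P_i(v)\cdot\alpha=\alpha,
\]
while
\[
\mathrm{Var}[q_i\mid G_i]=\alpha(1-\alpha)\sum_v\frac{(\muqs P_i(v))^2}{D_v^+}\le\frac{\alpha}{C_1\lambda\log n}\,\|\muqs P_i\|_2^2,
\]
by Proposition \ref{prop:concentration-degrees}. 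Splitting $\muqs P_i=(1-\mathfrak l_i)\muqs+\mathfrak l_i\nu_i$ (orthogonal supports) and crudely bounding $\mathfrak l_i^2\|\nu_i\|_2^2\le \mathfrak l_i^2\max_v\nu_i(v)\lesssim \alpha$, it suffices to show $\|\muqs\|_2^2=O(1/n)$. For this, I would iterate the killed-chain eigenvector identity $\muqs[P_i]^k_{\gates_i}=(1-\mathfrak l_i)^k\muqs$ to deduce $\muqs(u)\le (1-\mathfrak l_i)^{-S}\muqs P_i^S(u)$ for $S=3\tent\log n$, then invoke Corollary \ref{coro:mixing} to get $\muqs P_i^S(u)\le (1+o(1))\pi_i(u)=O(1/n)$ uniformly, and finally use that $\mathfrak l_i S=\op(1)$ in the regime $\alpha^{-1}\gg\tent$. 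Plugging back, $\mathrm{Var}[q_i\mid G_i]=O(\alpha/(n\log n)+\alpha^2/\log n)=\op(\alpha^2)$ because $\alpha\lambda n\log n\to\infty$. Chebyshev's inequality, combined with the high-probability estimates on $G_i$, then yields $q_i=\alpha(1+\op(1))$.

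\textbf{Step 4 (conclusion).} Given that $\tau_\rho\mid G\sim\mathrm{Geom}(q_i)$ with $q_i=\alpha(1+\op(1))$,
\[
\hat{\quench}^{G_i}_{\muqs}(\alpha\tau_\rho>t)=(1-q_i)^{\lfloor t/\alpha\rfloor}
=\exp\!\left(-t\,\frac{q_i}{\alpha}(1+o(1))\right)
\]
converges in probability to $\mathrm e^{-t}$, uniformly in $t\ge 0$; the uniformity follows from the elementary bound $\sup_{t\ge 0}|\mathrm e^{-tc}-\mathrm e^{-t}|=O(|c-1|)$ applied to $c=q_i/\alpha$.

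The main obstacle I expect is Step 3, specifically the $L^2$ control of $\muqs P_i$. The estimate $\|\muqs\|_2^2=O(1/n)$ is not immediate from the definition of the quasi-stationary distribution, and rests on transferring the uniform $\ell^\infty$-mixing of $P_i$ (Corollary \ref{coro:mixing}) to $\muqs$ via the killed-chain eigenvector identity, after ensuring that the killing rate $\mathfrak l_i$ is small enough compared to the inverse mixing time so that $(1-\mathfrak l_i)^S\sim 1$.
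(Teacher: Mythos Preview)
Your Steps 1, 2, and 4 are correct and in fact give a cleaner packaging than the paper: the observation that $\tau_\rho$ is \emph{exactly} geometric (quenched) with parameter $q_i=\mathfrak l_i\bar p_i$ reduces the whole proposition to the single estimate $q_i=(1+\op(1))\alpha$, and once that is established the uniform convergence in Step 4 is immediate.

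However, Step 3 has a genuine gap. You write ``I would condition on $G_i$, which determines $\muqs$, $P_i$ and all the $D_v^+$''. This is false: $\muqs$ is the quasi-stationary distribution of $P_i$ killed upon hitting $\gates_i$, and $\gates_i=\{v:O_v^+>0\}$ is determined by the rewiring coins, not by $G_i$. Hence $\muqs P_i(v)$ and $O_v^+$ are \emph{not} conditionally independent given $G_i$, and your identities $\mathbf E[q_i\mid G_i]=\alpha$ and $\mathrm{Var}[q_i\mid G_i]=\alpha(1-\alpha)\sum_v(\muqs P_i(v))^2/D_v^+$ are not justified. If instead you condition on the pair $(G_i,\gates_i)$ so that $\muqs P_i$ becomes measurable, then for $v\in\gates_i$ the conditional law of $O_v^+$ is $\mathrm{Bin}(D_v^+,\alpha)$ \emph{conditioned to be positive}, whose mean is $D_v^+\alpha/(1-(1-\alpha)^{D_v^+})\approx 1$ rather than $D_v^+\alpha$; the conditional mean of $q_i$ becomes $\sum_{v\in\gates_i}\muqs P_i(v)/D_v^+$ up to lower order, which is not obviously $\alpha$.

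This is precisely the homogenization problem the paper has to work for. Its proof (Steps [1]--[4] there) controls $\bar p_i$ directly: it shows that (i) the entrance law $\nu_i=\muinb$ into $\gates_i$ satisfies $\max_{v\in\gates_i}\nu_i(v)\le (1+\op(1))/|\gates_i|$ via the same $\muqs\le(1+\op(1))\pi_i$ estimate you quote; and (ii) all but an $O(\log(n)^{-2})$ fraction of gates are ``nice'', meaning $O_v^+=1$ and $D_v^+=(1+o(1))\lambda\log n$. Together these give $\bar p_i=(1+\op(1))/(\lambda\log n)$, hence $q_i=\mathfrak l_i\bar p_i=(1+\op(1))\alpha$ via Proposition~\ref{prop:rate}. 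Your framework would be complete if you replaced your Chebyshev argument by this quenched control of $\bar p_i$; the $\ell^\infty$ bound on $\muqs$ you already derived is exactly the ingredient needed for part (i).
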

Being the coupling \whp successful by Proposition \ref{prop:coupling}, as a consequence of Proposition \ref{prop:renewal} we deduce the following.
\begin{corollary}\label{coro:renewal}
	Recalling the definition of \eqref{eq:def-tau-jump}, 
	\begin{equation}\label{eq:tau-rho-2}
		\max_{i \le m}\max_{x\in V_i}\sup_{t \ge 0}\left|\mathbf{P}^G_x(\alpha \tauj> t)-{\rm e}^{-t}\right|=\op(1)\,.
	\end{equation}
\end{corollary}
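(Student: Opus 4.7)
The plan is to obtain Corollary~\ref{coro:renewal} by transferring Proposition~\ref{prop:renewal}, which describes $\tau_\rho$ under the coupling measure $\hat\quench^{G_i}_{\muqs}$, into a statement about $\tauj$ under the quenched law $\mathbf{P}^G_x$. The bridge is the coupling of Definition~\ref{def:coupling}: since the $X$-marginal of $\hat\quench^{G_i}_{x,\muqs}$ is, by construction, exactly the SRW on $G$ started at $x$, we have the identity
\begin{equation*}
    \mathbf{P}^G_x(\alpha \tauj > t) = \hat\quench^{G_i}_{x,\muqs}(\alpha \tauj > t)\,.
\end{equation*}
All the work is then to replace $\tauj$ (a functional of both coordinates of the coupling) by $\tau_\rho$ (a functional only of $Y$), which is controlled by Proposition~\ref{prop:renewal}.

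The key observation, which can be read directly off steps (A)--(B) of Definition~\ref{def:coupling}, is that on the success event $\mathcal{S}$ the first inter-community jump of $X$ is produced by the first occurrence of ``heads'' in the coin toss of step (B)(iv); at that same time $Y \in \gates_i$ and the same Bernoulli with parameter $O_v^+/D_v^+$ generates the first increment of $\rho_t$. Consequently, on $\mathcal{S}$ one has $\tauj = \tau_\rho + 1$. For a fixed $t$ we therefore split
\begin{equation*}
    \hat\quench^{G_i}_{x,\muqs}(\alpha \tauj > t) = \hat\quench^{G_i}_{x,\muqs}(\alpha (\tau_\rho+1) > t,\,\mathcal{S}) + \hat\quench^{G_i}_{x,\muqs}(\alpha \tauj > t,\,\mathcal{S}^c)\,,
\end{equation*}
and, since $\tau_\rho$ is $Y$-measurable and the law of $Y$ under $\hat\quench^{G_i}_{x,\muqs}$ coincides with its law under $\hat\quench^{G_i}_{\muqs}$ (the starting distribution $\muqs$ of $Y$ does not depend on $x$), the first summand differs from $\hat\quench^{G_i}_{\muqs}(\alpha \tau_\rho > t-\alpha)$ by at most $\hat\quench^{G_i}_{x,\muqs}(\mathcal{S}^c)$.

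Putting these estimates together and inserting $e^{-t}$ via the triangle inequality yields
\begin{equation*}
    \sup_{t\ge 0}\left|\mathbf{P}^G_x(\alpha \tauj > t)-\mathrm{e}^{-t}\right| \;\le\; 2\,\hat\quench^{G_i}_{x,\muqs}(\mathcal{S}^c) + \sup_{s\ge -\alpha}\left|\hat\quench^{G_i}_{\muqs}(\alpha \tau_\rho > s)-\mathrm{e}^{-s}\right| + (\mathrm{e}^{\alpha}-1)\,,
\end{equation*}
where the $\sup$ on the right is extended to $s\in[-\alpha,0)$ using the trivial bound $\hat\quench^{G_i}_{\muqs}(\alpha\tau_\rho>s)=1$ and $|1-\mathrm{e}^{-s}|\le \mathrm{e}^{\alpha}-1$ there. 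Taking the maximum over $i\le m$ and $x\in V_i$, the first term is $\op(1)$ by Proposition~\ref{prop:coupling}, the second is $\op(1)$ by Proposition~\ref{prop:renewal}, and the last is a deterministic $o(1)$ since $\alpha\to 0$ in the strongly supercritical regime.

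The only genuinely delicate point — and the one I would check most carefully when writing the full proof — is the identification $\tauj = \tau_\rho+1$ on $\mathcal{S}$. This requires verifying that the single Bernoulli with parameter $O_v^+/D_v^+$ used in (B)(iv) simultaneously plays the role of ``$X$ takes a rewired edge'' (so $\tauj=t+1$) and of the coin that increments $\rho$ (so $\tau_\rho=t$), which is exactly how Definition~\ref{def:coupling} is engineered. Everything else is a routine triangle inequality, and the one-step discrepancy is harmless precisely because $\alpha=o(1)$ and $t\mapsto\mathrm{e}^{-t}$ is uniformly continuous.
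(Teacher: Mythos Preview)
Your argument is correct and follows the same route as the paper, which simply states that the corollary is a consequence of Propositions~\ref{prop:coupling} and~\ref{prop:renewal} (noting just before the corollary that, on $\mathcal{S}$, $\tauj$ ``coincides with'' $\tau_\rho$). You are in fact slightly more careful than the paper: you correctly identify the one-step offset $\tauj=\tau_\rho+1$ on $\mathcal{S}$ and absorb it into the harmless $e^\alpha-1$ term, whereas the paper glosses over this.
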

\begin{proof}[Proof of Proposition \ref{prop:renewal}]
	Fix $i \le m$. The proof is articulated in five steps:
	\begin{enumerate}
		\item[{\bf [1]}] First, we show that the density of gates in $\gates_i$ which have a out-degree different (at first order) than $\lambda\log(n)$ is at most $\log(n)^{-2}$;
		\item[{\bf [2]}] Then we show that the same is true for the gates $x \in \gates_i$ which have $O^+_x\ge 2$;
		\item[{\bf [3]}] Then we show that the distribution of the first element of $\gates_i$ visited by the random walk initialized at $\muqs$ is essentially uniform;
		\item[{\bf [4]}] Call \emph{nice} the subset of gates which do not have the properties in step {\bf [1]} and {\bf [2]}. We show that the first $\log(n)^{3/2}$ visits to $\gates_i$ occur at \emph{nice} gates;
		\item[{\bf [5]}] We wrap up the argument developed in the previous steps to conclude the validity of \eqref{eq:tau-rho}.\\
	\end{enumerate}
    
{\bf Step [1].} Call 
	$$\cE_\varepsilon=\left\{ \,\#\!\left\{x\in \gates_i : \left|D_x^+-\lambda\log(n)\right| \ge \varepsilon\lambda\log(n) \right\}>|\gates_i|\log(n)^{-2} \right\}\,.$$
	As mentioned, we want to show that $\p(\cE_\varepsilon)=o(1)$, for some $\eps=o(1)$. Notice the distribution of the out-degree of $x$ conditioned on the event $\{x\in\gates_i\}$ can be written explicitly as follows: for any $j\in[1,n]$
	\begin{equation} \begin{split}
		\p(D_x^+=j|x \in \gates_i)&
		=\frac{\p(x \in \gates_i|D_x^+=j)\p(D_x^+=j)}{\p(x \in \gates_i)}\\
		&=\frac{(1-(1-\alpha)^j)\p(D_x^+=j)}{\sum_{k=1}^n(1-(1-\alpha)^k)\p(D_x^+=k)}\\
		&=\frac{(1-(1-\alpha)^j)\p(D_x^+=j)}{1-(1-\alpha p)^{n}}
		\le 2\p(D_x^+=j)\,.
	\end{split}
	\end{equation}
	Since by the Chernoff bound it holds
	\begin{equation}
		\p\big(|D_x^+-\lambda\log(n)|>\varepsilon\log(n)\big)\le \exp\left(-\tfrac13\varepsilon^2\lambda\log(n)\right)\,,
	\end{equation}
	we get
	\begin{equation}
		\p\big(|D_x^+-\lambda\log(n)|>\varepsilon\log(n)\mid x\in\gates_i\big)\le 2\exp\left(-\tfrac13\varepsilon^2\lambda\log(n)\right)\,.
	\end{equation}
	Now, notice that, thanks to Eq.~\eqref{one},
	\begin{equation}
		\p(\cE_\varepsilon)=\p\big(\cE_\varepsilon\cap\{|\gates_i|/(n\alpha\lambda\log(n))\in[1/2,2]\}\big)+o(1)\,.
	\end{equation}
	On the event $\{|\gates_i|/(n\alpha\lambda\log(n))\in[1/2,2]\}$, we can use the union bound and the fact that $\log\binom{N}{m}=m \log(\frac{N}{m})(1+o(1))\le 2m\log(\frac{N}{m}),$ for $m \ll N$, to get
	\begin{equation}
		\begin{split}
			\p\big(\cE_\varepsilon&\cap\{|\gates_i|/(n\alpha\lambda\log(n))\in[1/2,2]\}\big)\\
			& \le \max_{C \in \left[\frac{1}{2},2\right]}\binom{2n\alpha\lambda\log(n)}{C n\alpha\lambda\log(n)^{-1}}\left(2\exp\left(-\tfrac13 \varepsilon^2\lambda \log(n)\right)\right)^{C n\alpha \lambda\log(n)^{-1}} \\
             & \lesssim \max_{C \in \left[\frac{1}{2},2\right]}\exp{\left(2\frac{C n\alpha\lambda}{\log(n)} \log \left(\frac2C\log(n)^2\right)\right)}
           \left(2\exp\left(-\tfrac13 \varepsilon^2\lambda \log(n)\right)\right)^{\frac{C n\alpha \lambda}{\log(n)}} \\
            & \lesssim \max_{C \in \left[\frac{1}{2},2\right]}\exp{\left(2C n\alpha\lambda\frac{\log(\log(n)^2)}{\log(n)}\right)}
            2^{\frac{C n\alpha \lambda}{\log(n)}}\exp\left(-\tfrac C3 \varepsilon^2\lambda^2 n\alpha\right) \\
            &\le \exp\left(n\alpha\lambda\left(8\frac{\log\log(n)}{\log(n)}+{\frac{2 \log(2) }{\log(n)}}-\varepsilon^2\frac 6\lambda\right)\right)\,.
		\end{split}
	\end{equation} 
    Therefore, we can choose, for example, $\varepsilon=\log(n)^{-1/3}$, and have
	\begin{equation}\label{eq:est-E}
		\p(\cE_\varepsilon)=o(1)\,.
	\end{equation}
    
{\bf Step [2].}   Call
    $
		\gates_{i,\rm bad}=\left\{x \in \gates_i \mid O_x^+ \ge 2\right\},
    $
	and consider the event
	\begin{equation}
		\mathcal{R}=\left\{|\gates_{i,\rm bad}|>|\gates_i|\log(n)^{-2} \right\}\,.
	\end{equation}
	To show that the latter has a vanishingly small probability it is enough to realize that

	\begin{equation}
		\E[|\gates_{i,\rm bad}|]\le n\alpha^2\lambda^2\log(n)^2\,.
	\end{equation}
	Indeed, by Markov inequality, and recalling $\alpha \ll n^{-1/2} \log(n)^2$,
	\begin{equation}\label{eq:est-D}
		\begin{split}
			\p\left(\mathcal{R} \right)&=\p\left(\mathcal{R}\cap \{|\gates_i|\ge \tfrac12\E[|\gates_i|]\} \right)+o(1)\\
			&\le \frac{2n\alpha^2\lambda^2\log(n)^4}{n\alpha\lambda\log(n)}+o(1)\conv 0\,.
		\end{split}
	\end{equation}
    
{\bf Step [3].} Now we focus on the probability distribution
on the set of gates, according to which the first gate is visited, when starting at quasi-stationarity. We define
	\begin{equation} \label{eq:muinb}
		\begin{split}
			\muinb(x)&=\frac{\sum_{y\in V_i\setminus \gates_i}\muqs(y)P(y,x)}{\sum_{y\in V_i\setminus \gates_i}\muqs(y)\sum_{z\in \gates_i}P(y,z)}\\
			&=(1+\op(1))\frac{\sum_{y\in V_i\setminus \gates_i}\muqs(y)P(y,x)}{\alpha\lambda\log(n)} \,,\qquad x\in\gates_i\,,
		\end{split}
	\end{equation} 
	where the second line is due to $\mu_i^\star P(\gates_i)=\mathfrak{l}_i$ and the $\op(1)$ term is uniform over $x\in\gates_i$.\\
	We claim that
	\begin{align}
		\max_{x\not\in \gates_i}\frac{\muqs(x)}{\pi_i(x)}\le 1+\op(1)\,.
	\end{align}
	To prove it, recalling the definition of $S$ in Corollary \ref{coro:mixing}, we follow the same argument as in  \cite[Eqs.~5.15--5.19]{MQS}. We have that, for any $x\not\in\gates_{i}$,
	\begin{equation} \begin{split}\label{eq:eq:MQS}
		\muqs(x)&=(1-\mathfrak{l}_i)^{-S}\sum_{y\in V_i\setminus\gates_{i}}\muqs(y)[P_i]_{\gates_{i}}^S(y,x)\\
		&\le(1-\mathfrak{l}_i)^{-S}\sum_{y\in V_i\setminus\gates_{i}}\muqs(y)P_i^S(y,x)\\
		&=(1+\op(1))(1-\mathfrak{l}_i)^{-S}\sum_{y\in V_i\setminus\gates_{i}}\muqs(y)\pi_i(x)\\
		&=(1+\op(1))(1-\mathfrak{l}_i)^{-S}\pi_{i}(x)\\
		&=(1+\op(1))\pi_{i}(x)\,, \end{split}
	\end{equation}
	where the first line follows by the definition of quasi-stationary distribution; the second line follows by the trivial fact $[P_i]_{\gates_{i}}^S(y,x) \le P_i^S(y,x) $ for any $x,y\not\in\gates_{i}$; the third line follows by Corollary \ref{coro:mixing} and the fact that, thanks to Theorem \ref{thm:pi-char} and Proposition \ref{prop:concentration-degrees}, $\pi_i(x)+\op(1)=(1+\op(1))\pi_i(x)$ uniformly in $x\in V_i$; in the fourth line we simply took the sum over $y\in V_i\setminus\gates_{i}$; and in the last line we used that, thanks to Eq.~\eqref{eq:rate}, $\mathfrak{l}_iS=\op(1)$.
	Therefore, 
	\begin{equation}\label{eq:unif-muqs}
		\begin{split}
			\max_{x\in\gates_i}\muinb(x)&\le (1+\op(1))	\max_{x\in\gates_i}\frac{\sum_{y\in V_i\setminus \gates_i}\pi_i(y)P(y,x)}{\alpha\lambda\log(n)}\\
			&\le (1+\op(1))	\max_{x\in\gates_i}\frac{\pi_i(x)}{\alpha\lambda\log(n)}
			=O_{\p}\left(\frac{1}{|\gates_i|}\right)\,,
		\end{split}
	\end{equation}
	where the last bound follows from Theorem \ref{thm:pi-char} and Lemma \ref{lemma:pi-gates}. In other words, the gates visited by the process are essentially uniformly distributed.\\
    
{\bf Step [4].}  Fix $\varepsilon=\log(n)^{-1/3}$ and call
	\begin{equation}
		\gates_{i,\rm nice}=\left\{x\in\gates_i\setminus\gates_{i,\rm bad}\mid D_x^+\in[(1-\varepsilon)\lambda\log(n),(1+\varepsilon)\lambda\log(n)] \right\}\,.
	\end{equation}
	Notice that, thanks to steps {\bf [1]} and {\bf [2]}, in particular Eqs.~\eqref{eq:est-D} and \eqref{eq:est-E} respectively,
	\begin{equation}\label{eq:est-DE}
		\p\left(\frac{|\gates_{i,\rm nice}|}{|\gates_{i}|}\ge 1-2\log(n)^{-2}\right)=1-o(1)\,.
	\end{equation}
	Call $(B_k)_{k\ge 1}$ the  the sequence of vertices in $\gates_i$ that are visited by the process $(Y_t)_{t\ge 0}$. 
    Consider the event
    $$\mathcal{W} = \{ B_k \in\gates_{i,\rm nice},\, \forall k \le \log(n)^{3/2} \}.$$
	From step \textbf{[3]}, particularly \eqref{eq:est-DE} and \eqref{eq:unif-muqs}, it follows
	\begin{equation}\label{eq:W}
		\hat{\quench}^{G_i}_{\muqs}\big(\mathcal{W} \big)=1-\op(1)\,.
	\end{equation}

{\bf Step [5].} Now recall the notation $(\kappa_t,\rho_t)_{t\ge 0}$ given in Definition \ref{def:coupling} and $\tau_\rho$ as in Eq.~\eqref{eq:def-tau-rho}. 
For $s \ge 0$ it holds
    \begin{equation}\label{eq:split-ub} 
		\hat{\quench}^{G_i}_{\muqs}(\tau_\rho> s)=\hat{\quench}^{G_i}_{\muqs}(\rho_s=0)= \hat{\quench}^{G_i}_{\muqs}(\rho_s=0,\mathcal{W})+	\hat{\quench}^{G_i}_{\muqs}(\rho_s=0,\mathcal{W}^\complement)\,.
	\end{equation}
	The second term on the \rhs of \eqref{eq:split-ub} is $\op(1)$ thanks to \eqref{eq:W}. Let us bound the other one.
    Recalling that $(\kappa_s-\kappa_{s-1})_{s\ge 1}$ are \iid Bernoulli random variables with parameter $\mathfrak{l}_i$, it immediately follows that $\kappa_s\sim \bin(s,\mathfrak{l}_i)$,
    so that, as $\mathfrak{l}_i=\lambda\alpha\log(n)(1+\op(1))=\op(1)$, it holds $$\hat{\mathbf{Var}}^{G_i}_{\muqs}(\kappa_s)
    =s(\mathfrak{l}_i-\mathfrak{l}_i^2)
    =s\lambda\alpha\log(n)(1+\op(1))
    =  \hat{\Equench}^{G_i}_{\muqs}[\kappa_s](1+\op(1)).$$
    Taking $\delta=(\log\log\log(n))^{-1}$ and 
    $s = t/\alpha$, where $t \in \left[ C_n^{-1},{C_n}\right] $ and $C_n$ is a sequence diverging sufficiently slowly,
    by Chebychev inequality, 
	\begin{equation}
    \begin{split}
    \hat{\quench}^{G_i}_{\muqs}\left(\left|\frac{\kappa_s}{{s}{\alpha\lambda\log(n)}}-1\right|\ge \delta\right)
    &\le\frac{\hat{\Equench}^{G_i}_{\muqs}[\kappa_s]}{\delta^2 (\hat{\Equench}^{G_i}_{\muqs}[\kappa_s])^2}(1+\op(1))\\
    &\le \frac{(\log\log\log(n))^2C_n}{\lambda\log(n)}(1+\op(1))=\op(1)\,.
    \end{split}
    \end{equation}
    In conclusion, the first term on the \rhs of \eqref{eq:split-ub} can be bounded, for $s$ as above, by
\begin{equation}
		\begin{split}
			\hat{\quench}^{G_i}_{\muqs}(\rho_s=0,\mathcal{W})
			&= (1-\op(1))\hat{\quench}^{G_i}_{\muqs}\left(\rho_s=0\,\bigg\vert\,\mathcal{W}\,\cap\left\{ \left|\frac{\kappa_s}{{s}{\alpha\lambda\log(n)}}-1\right|<\delta\right\} \right)+\op(1)\\
			&=(1-\op(1))\left(1-\frac{1}{(1+O(\varepsilon))\lambda\log(n)}\right)^{(1+O(\delta))s\alpha\lambda\log(n)}+\op(1)\\
			&={\rm e}^{-s\alpha}+\op(1)\,,
		\end{split} 
	\end{equation}
    where we used that $\eps$ and $\delta$ are vanishing and, in the second line, we used the conditioning and that, for 
    $s \alpha \log(n) 
    \ll \log(n)^{3/2}$, on the event $\mathcal{W}$, the gates visited by the random walk have degrees in $[(1-\eps)\lambda\log(n),(1+\eps)\lambda \log(n)]$.
	Then, provided that $C_n$ diverges sufficiently slowly, 
        recalling $s\alpha=t$, we get
\begin{equation}
	 	\max_{t\in\left[C_n^{-1}, {C_n}\right]}
         \left|\hat{\quench}^{G_i}_{\muqs}\left(\tau_\rho> \frac{t}\alpha\right)- {\rm e}^{-t}\right|=\op(1)\,,
\end{equation}
and this concludes the proof.
\end{proof}

\newcommand{\muinh}{\hat{\mu}^{\rm in}_{\gates_i}}

We are now in a good position to present the proof of Proposition \ref{prop:coupling}.

\begin{proof}[Proof of Proposition \ref{prop:coupling}]
	First of all we observe the following fact: if $\rho_t>0$ for some $t\ge0$ then, at time $t$, we have sufficient information to declare if the coupling is \emph{successful} or \emph{failed}. Therefore, thanks to Proposition \ref{prop:renewal}, with probability $1-\op(1)$  the coupling consists of less than $\log(n)^{3/2}$ stages. Moreover, thanks to Theorem \ref{thm:cutoff} and the subadditivity of the total variation distance, the probability of a meeting before time $\log(n)^2$ is $1-\op(n^{-1})$ by Corollary \ref{coro:mixing}. This means that the probability that along the coupling there is a stage in which the two processes meet after time $\log(n)^2$ is $\op(1)$.
	
	Now, each stage the coupling might fail because of two alternative reasons:
	\begin{enumerate}
		\item 
        On the one hand, a stage might produce a failure if (ii) in Definition \ref{def:coupling} happens: the process $Y$ hits $\gates_i$ before meeting the first process under optimal coupling. By Corollary \ref{coro:sigma} the probability that $Y$ hits $\gates_i$ before time $\log(n)^2$ is $\op (n^{-\frac17})$. 
        Hence, the probability that the coupling fails due to this kind of event is $\op (\log(n)^{3/2}n^{-\frac17})$.
		\item 
        On the other hand, a stage might produce a failure if (iii) in Definition \ref{def:coupling} happens: the process $X$ might visit $\gates_i$ and traverse a rewired edge before time $\log(n)^2$.
         For what concerns the first stage, in which the starting point of the walk is arbitrary, thanks to Proposition \ref{prop:jump-time}, the probability of this event is $\op(1)$. 
For any successive stage, first notice that after time $\log(n)^2$, by Corollary \ref{coro:mixing}, the distribution of the random walk $X$ can be \whp approximated in $\ell^\infty$-norm by $\pi_i$. 
        Thanks to Lemma \ref{lemma:key}, complemented with Remark \ref{rmk:muout}, we have that the expected number of visits to $\gates_i$ within time $\log(n)^2$ is $o_{\p}(n^{-1/3})$. 
		Hence, the probability that there exists a stage in which $\gates_i$ is visited before $\log(n)^2$ is $o_{\p}(\log(n)^{3/2}n^{-\frac13})$.
	\end{enumerate}
\end{proof}

\subsection{Proof of Proposition \ref{prop:quenched-law-new} and Theorem \ref{thm:pi-approximation-new}}\label{suse:proof-prop-thm}
We are almost ready to conclude the proof of Proposition \ref{prop:quenched-law-new}, and then to show how Theorem \ref{thm:pi-approximation-new} 
can easily be deduced from it using the same set of arguments used to deduce Theorem \ref{thm:pi-approximation} from Proposition \ref{prop:quenched-law}.

	We start by stressing that, thanks to Corollary \ref{coro:renewal}, we know that the jumping time of the process starting at any $x\in V$, properly scaled, is well approximated by a standard exponential random variable. We now want to show that the jumps occur uniformly at random among communities. 
\begin{lemma}\label{lemma:unif-jumps} 	 For $\sqrt{n}\log(n)^{-2}\ll \alpha^{-1}\ll \lambda n \log(n)$ 
	\begin{equation}
		\max_{i \le m}\max_{j\neq i}\max_{x\in V_i}\left|\quench^G_x(X_{\tauj}\in V_j)- \frac1{m-1}\right|=\op (1)\,.
	\end{equation}
\end{lemma}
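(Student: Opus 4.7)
The plan is to combine the coupling of Definition~\ref{def:coupling} with a second-moment computation that exploits the independence of rewired-edge target communities in the DBM construction. By Proposition~\ref{prop:coupling}, the coupling $\hat{\quench}^{G_i}_{x,\muqs}$ is successful with probability $1-\op(1)$; on this event, $\tauj$ coincides with $\tau_\rho$ and $X_{\tauj}$ lies in the community of a uniformly chosen rewired out-edge of the gate $v^*$ at which $Y$ first fires. By Corollary~\ref{coro:renewal} and Proposition~\ref{prop:rate}, the number of gate visits before $\tauj$ is $O_{\p}(\lambda\log(n))\ll \log(n)^{3/2}$, so combined with the event $\mathcal{W}$ from Step [4] of the proof of Proposition~\ref{prop:renewal} we may assume $v^*\in\gates_{i,\text{nice}}$. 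In particular, $O^+_{v^*}=1$, so $X_{\tauj}\in V_j$ if and only if the unique rewired neighbor of $v^*$ lies in $V_j$.

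Next I derive a uniform sup-bound on the quenched law of $v^*$. The reset mechanism for $Y$ renders the sequence $(B_k)_{k\ge 1}$ of visited gates approximately \iid with a law comparable to $\muinb$ defined in~\eqref{eq:muinb}, which by~\eqref{eq:unif-muqs} satisfies $\max_{v\in\gates_i}\muinb(v)=O_{\p}(1/|\gates_i|)$. On $\mathcal{W}$, the per-visit firing probability $O^+_v/D^+_v=1/D^+_v$ is the same across all $v\in\gates_{i,\text{nice}}$ up to a multiplicative error $1+o(\log^{-1/2}(n))$. A Bayes' rule computation for the first-success index $K$, combined with the approximate \iid structure, then gives
\[
\max_{v\in\gates_{i,\text{nice}}}\quenchG_x(v^*=v)=O_{\p}\!\left(\tfrac{1}{|\gates_{i,\text{nice}}|}\right), \qquad \sum_{v\in\gates_{i,\text{nice}}}\quenchG_x(v^*=v)=1-\op(1).
\]

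Finally I invoke the independence of the target community assignments. By step~(2) of the DBM construction, conditionally on the intra-community graphs $(G_i)_{i\le m}$ and the set of marked tails, the target community of each rewired edge is \iid uniform on $\{1,\dots,m\}\setminus\{i\}$; moreover the map $v\mapsto\quenchG_x(v^*=v)$ depends only on the intra-community data, hence is frozen under this conditioning. Consequently the random variable $\sum_{v\in\gates_{i,\text{nice}}}\quenchG_x(v^*=v)\,\one_{\{\text{rewired neighbor of }v\in V_j\}}$ has conditional mean $(1-\op(1))/(m-1)$ and conditional variance at most $\sum_v \quenchG_x(v^*=v)^2=O_{\p}(1/|\gates_{i,\text{nice}}|)$, which vanishes since Lemma~\ref{lemma:pi-gates} and~\eqref{eq:est-DE} yield $|\gates_{i,\text{nice}}|\gtrsim n\alpha\lambda\log(n)\gtrsim \sqrt{n}/\log(n)$. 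A conditional Chebyshev inequality followed by a union bound over $i\le m$ and $j\neq i$ concludes the proof. The principal obstacle is the sup-bound in the second paragraph: although the resets give an almost-\iid sequence of gate visits, the identity of $v^*=B_K$ is size-biased by the firing probability $1/D^+_v$, and this bias is controlled only thanks to the near-uniform out-degrees of nice gates established in Step [1] of the proof of Proposition~\ref{prop:renewal}.
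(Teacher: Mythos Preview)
Your argument is correct and follows the same route as the paper's proof: couple with the $Y$-process via Proposition~\ref{prop:coupling}, restrict to nice gates via the event $\mathcal{W}$, bound the law of the firing gate $v^*$ using the sup-bound on $\muinb$ from~\eqref{eq:unif-muqs} together with the near-constant firing probabilities on $\gates_{i,\text{nice}}$, and finally exploit the independent uniform target-community labels. The paper phrases the last step slightly differently---it partitions $\gates_{i,\text{nice}}=\sqcup_{j\neq i}\gates_{i,\text{nice}}^j$ and invokes the law of large numbers $|\gates_{i,\text{nice}}^j|/|\gates_{i,\text{nice}}|\to 1/(m-1)$ directly (Eq.~\eqref{eq:even}), then combines with the pointwise upper bound on the law of $v^*$ (an upper bound on each $\quench(J(v^*)=j)$ plus the constraint $\sum_j\quench(J(v^*)=j)=1-\op(1)$ forces the matching lower bound)---whereas you run a conditional Chebyshev on the weighted indicator sum; both are valid and essentially equivalent. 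One small slip: your claim $|\gates_{i,\text{nice}}|\gtrsim \sqrt{n}/\log(n)$ does not follow from the hypotheses (when $\alpha^{-1}$ is close to $\lambda n\log(n)$ the gate set can diverge arbitrarily slowly); only $|\gates_{i,\text{nice}}|\to\infty$ is guaranteed by $\alpha^{-1}\ll\lambda n\log(n)$, but that is exactly what your Chebyshev step needs, so the argument still goes through.
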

\begin{proof}
	Notice that vertices in $\gates_{i,\rm nice}$ have, by definition, a unique rewired edge.
To each $y \in \gates_{i,\rm nice}$, one can assign a mark $J(y)$, chosen
u.a.r.~in $[m]\setminus\{i\}$, representing the community to which the unique rewired edge of $y$ connects after rewiring.
    We will then partition
	\begin{equation}
		\gates_{i,\rm nice}=\sqcup_{j\neq i}\gates_{i,\rm nice}^j\,,
	\end{equation}
	where $\gates_{i,\rm nice}^j$ is the subset of $\gates_{i,\rm nice}$ having mark $j$.
	Since $|\gates_{i,\rm nice}|=\omega_{\p}(1)$, and by the uniform choice,
	\begin{equation}\label{eq:even}
		\max_{i \le m}\max_{j\neq i}\left|\frac{|\gates_{i,\rm nice}^j|/|\gates_{i,\rm nice}|}{(m-1)^{-1}} -1\right|=\op(1)\,.
	\end{equation}
	Moreover, specializing \eqref{eq:unif-muqs} to the case $x\in\gates_{i,\rm nice}$, where $D_x^+$ can be well estimated, one has
	\begin{equation}\label{eq:nice}
		\max_{x\in\gates_{i,\rm nice}}\muinb(x)\le (1+\op(1))\frac{1}{|\gates_i|}=(1+\op(1))\frac{1}{|\gates_{i,\rm nice}|}\,,
	\end{equation}
	where in the last step we used \eqref{eq:est-DE}. 
    As a consequence, the hitting measure of $\gates_i$ is asymptotically uniform on $\gates_{i,\rm nice}$. In conclusion, the desired result follows by putting together Proposition \ref{prop:coupling}, \eqref{eq:even} and \eqref{eq:nice}.
\end{proof}
Thanks to Lemma \ref{lemma:unif-jumps} we can couple the evolution of the non-Markovian process $(c(X_t))_{t\ge 0}$ with the evolution of a simple random walk on a complete graph with $m$ vertices, with transitions as in \eqref{eq:trans-complete}. Using this fact,
we can finally prove Proposition \ref{prop:quenched-law-new} and Theorem \ref{thm:pi-approximation-new}.

\begin{proof}[Proof Proposition \ref{prop:quenched-law-new}]
	We consider the following iterated version of the coupling in Definition \ref{def:coupling}:
	\begin{itemize}
		\item We start the coupling at some $x\in V_i$ for some $i \le m$.
		\item If the coupling succeeds and at time $\tauj$ the process is found at some $y\in V_j$ with $j\neq i$, then the coupling is restarted with the initial distribution $\delta_y\otimes \mu^\star_j$.
		\item Iterate this procedure up to the first iteration at which the corresponding coupling fails.
	\end{itemize}
Call $\widehat{\quench}^G_x$ the law of this coupling, 
and fix an arbitrary sequence of integers $C\equiv C_n\gg 1$.
Call $\mathcal{E}_C^{\rm succ}$ the event in which the coupling succeeds up to the $C^2$-th iteration. Then, thanks to Proposition \ref{prop:coupling}
\begin{equation}\label{eq:582}
\min_{x\in V}\widehat{\quench}^G_x(\mathcal{E}_C^{\rm succ})=(1-\op(1))^{C^2}\,.
\end{equation}
Call now $\mathcal{E}_C^{\rm iter}$ the event in which by time $t=C\alpha^{-1}$ there are at most $C^2$ iterations. Then, 
\begin{equation} \label{eq:5.88}
\begin{split}
      \min_{x\in V}\widehat \quench^G_x(\mathcal{E}_C^{\rm iter}\cap\mathcal{E}_C^{\rm succ} )
       &=\min_{x\in V}\widehat\quench^G_x(\mathcal{E}_C^{\rm succ} )\widehat\quench^G_x(\mathcal{E}_C^{\rm iter}\mid\mathcal{E}_C^{\rm succ} )=(1-\op(1))^{2C^2}\,,
    \end{split}
\end{equation}
where in the second equality we used \eqref{eq:582} and Corollary \ref{coro:renewal} on a union bound on the $C^2$ iterations of the coupling, which ensures that---conditionally on the event $\mathcal{E}_C^{\rm succ}$---the length of each iteration is asymptotically geometrically distributed with parameter $\alpha$. As a consequence, the $\op(1)$ in \eqref{eq:5.88}, is the maximal one among the errors in \eqref{eq:582} and Corollary \ref{coro:renewal}. In conclusion, as soon as $C\gg 1$ is chosen to diverge sufficiently slowly, the iterative version of the coupling will be successful \whp up to time $t=C\alpha^{-1}$. In particular, up to that time, the number of iterations will be \whp at most $C^2$ and the length of  such iterations will be \whp coupled to an independent geometric random variable of rate $\alpha$.
Moreover, thanks to  Lemma \ref{lemma:unif-jumps}, the inter-community jump ending each iteration is approximately uniformly distributed among the other communities. Therefore, if $(\tauj^{({\ell})})_{{\ell} \le C^2}$ denotes the sequence of the lengths of the first $C^2$ iterations, we can couple the sequence $\big(c(X_{\tauj^{({\ell})}})\big)_{\ell\le C^2}$ with the trajectory (of length $C^2$) of the process with transition matrix $Q$ at a total-variation cost bounded by $1-C^2\op(1)$. The latter goes to $1$ as soon as $C$ diverges sufficiently slowly. At this point the desired result follows at once by noting that if, for any $t\ge0$, we sample \iid geometric random variables of parameter $\alpha$ up to the first time in which their cumulative sum is above $t$ (hence, sampling the jump times), and then we sample the path of a simple random walk on the complete graph, we can entirely reconstruct the trajectory of process with transition matrix $Q$.

\end{proof}

\begin{proof}[Proof of Theorem \ref{thm:pi-approximation-new}]
The proof of this fact follows the same line of argument as in Theorem~\ref{thm:pi-approximation}. 
In our setting, Eq.~\eqref{1/m} follows from Proposition~\ref{prop:quenched-law-new} 
for any $T \ll C\alpha^{-1}$, where $C \gg 1$ is as given
in Proposition~\ref{prop:quenched-law-new}.
The remainder of the argument proceeds through the same steps, leading to the validity of Eq.~\eqref{eq:pi-approx-1} 
for any $\alpha^{-1} \ll T \ll \sqrt{C}\alpha^{-1}$. Finally, by the monotonicity of the distance to equilibrium, 
the result holds for any $T \gg \alpha^{-1}$.
\end{proof}

\section{Critical and subcritical regime}\label{sect:entropic-method}
In this section, we analyze the mixing behavior of the random walk in the regime $\alpha^{-1} \lesssim \tent$. 
This range of $\alpha$ includes both the subcritical and critical cases, described in Eqs.~\eqref{eq:sub-cutoff} 
and \eqref{eq:crit-cutoffmeta}.
The proofs of these results are adaptations of the techniques developed in \cite{BCS1,BP} to the current setting 
with multiple communities. 
Since it will be useful later, in what follows we take $\alpha^{-1}=C\tent$, for $C>0$. We again stress that Eq.~\eqref{eq:sub-cutoff} will hold only for $C\to 0$.
\subsection{Concentration results for random walk paths}
The first random object to study is given by the following definition. For each oriented path $\pat=(x_0,\dots,x_t)$ in $G$, we define the quenched probability \textit{mass} of $\pat$ as
$$\mass(\pat)\coloneqq\prod_{s = 0}^{t-1} P(x_s,x_{s+1})\,.$$
We can state the following result, which can be written in the shape of a Law of Large Numbers for the variables $\log(P(X_s,X_{s+1}))$. 

\begin{theorem}[Quenched LLN] \label{thm:LLN}
	Let $t=\Theta(\tent)$ and $\theta \in (0,1)$ be such that there exists $\rho>0$, $\rho\neq1$, satisfying $\log \theta = \rho \entropy t (1+o(1))$.
		Then
	\begin{equation}
		\max_{x \in V} \left|\quenchG_x\left(\mass(X_0,X_1,\dots,X_t)>\theta\right)-\one_{\{\rho>1\}} \right|\, \pconv 0.
	\end{equation}
		\end{theorem}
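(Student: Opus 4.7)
I would rewrite
\begin{equation*}
-\log\mass(X_0,\dots,X_t)=\sum_{s=0}^{t-1}\log D^+_{X_s}=:S_t,
\end{equation*}
and read the hypothesis in the only sign convention consistent with $\theta\in(0,1)$ and $\rho>0$, namely $\log(1/\theta)=\rho\entropy t(1+o(1))$. Then $\{\mass(X_0,\dots,X_t)>\theta\}=\{S_t<\rho\entropy t(1+o(1))\}$, and the target reduces to a quenched weak LLN for $S_t/t$ around $\entropy$, uniform in the starting vertex, with a deviation wide enough to separate $\rho$ from $1$.

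\textbf{Annealed LLN.} The first step is the annealed counterpart, using the annealed walk of Section \ref{suse:annealed}. By Lemma \ref{lemma:annealed-law}, for $t=\Theta(\tent)\ll \sqrt{n}$ the path is cycle-free under $\pan$ with probability $1-O(t^2/n)$, and on $\mathcal{C}_t$ the out-degrees $(D^+_{X_s})_{s=0}^{t-1}$ are independent copies of $\bin(n,p)$, up to the harmless size-biasing $1+\bin(n-2,p)$ along already-traversed edges (as in the proof of Lemma \ref{lemma:annealed-law}). Since $\log D$ with $D\sim\bin(n,p)$ has mean $\entropy(1+o(1))$ and variance $O(1/\log n)$---$D$ concentrates at $\lambda\log n$ with relative fluctuations of order $(\log n)^{-1/2}$---Chebyshev's inequality gives, uniformly in $x\in V$ and for each fixed $\delta>0$,
\begin{equation*}
\pan_x\big(|S_t-t\entropy|>\delta\entropy t\big)\le \frac{t\cdot O(1/\log n)}{(\delta\entropy t)^2}+O\!\left(\tfrac{t^2}{n}\right)=o(1).
\end{equation*}
Taking $\delta<|\rho-1|/2$ then yields $\pan_x(\mass>\theta)=\one_{\rho>1}+o(1)$, uniformly in $x$.

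\textbf{Quenched upgrade.} To turn this into a \emph{uniform quenched} statement I would reuse verbatim the $K$-th moment bootstrap exploited in Propositions \ref{prop:quenched-law} and \ref{prop:jump-time}. Fix $K=\lfloor\log^2 n\rfloor$ and consider $K$ independent annealed walks from $x$, restricting to the high-probability event $\cE\cap\cD$ (uniform degree concentration from Proposition \ref{prop:concentration-degrees} and local tree-excess at most one from Lemma \ref{lemma:tree-like}). For each $j$, conditional on the first $j-1$ walks, the event that the $j$-th walk falls into $\{\mass>\theta\}$ is covered by the same four contributions as in those propositions: short-range collisions within depth $4$ (bounded by $O(K\log^{-4}n)$), long-range collisions beyond depth $4$ (bounded by $O(Kt^2/n)$), exit from $\veps$ (handled by $\mathcal{V}_\eps$), and the genuinely annealed event, which by the previous paragraph has probability $\one_{\rho>1}+o(1)$. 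Iterating yields
\begin{equation*}
\E\big[\one_{\cE\cap\cD}\,\quenchG_x(\mass>\theta)^K\big]\le \big(\one_{\rho>1}+o(1)\big)^K,
\end{equation*}
so Markov's inequality at any fixed level $\delta\in(0,1)\setminus\{\one_{\rho>1}\}$, followed by a union bound over the $mn$ starting vertices, produces $\max_{x\in V}|\quenchG_x(\mass>\theta)-\one_{\rho>1}|\pconv 0$. The case $\rho>1$ is handled by running the same argument on the complementary event $\{\mass\le\theta\}$.

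\textbf{Main obstacle.} The annealed LLN itself is a routine Chebyshev estimate; the delicate piece is the $K$-th moment bootstrap, because after the first stage the subsequent walks do not restart at $x$ but at vertices lying on the already-revealed tree, and the out-degrees there are no longer freshly sampled. Two observations keep this under control: (i) the Chebyshev bound of the second paragraph is uniform in the starting vertex (the annealed law depends on the start only through its community, thanks to Lemma \ref{lemma:annealed-law}), so the per-stage estimate does not degrade along the bootstrap; and (ii) the size-biasing and the restriction to $\mathcal{C}_t$ introduce perturbations of order $O(t/n)+O(1/\log n)$, both negligible against the target scale $|\rho-1|\entropy t\asymp\log n$.
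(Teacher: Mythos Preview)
Your proposal is correct and follows essentially the same route that the paper invokes by citing \cite{BP,BCS2}: an annealed LLN for $S_t=\sum_{s<t}\log D^+_{X_s}$ obtained from the i.i.d.\ structure of out-degrees along a cycle-free path, followed by a quenched upgrade via the $K$-th moment/multiple-replica argument (precisely the machinery the paper already deploys in Propositions \ref{prop:quenched-law} and \ref{prop:jump-time}). The only point worth flagging is the one you identify in your ``Main obstacle'': the contribution of the first few steps (where the $j$-th replica may still run on the previously revealed portion of the graph) is deterministically $O(\log\log n)$ under $\cE$, hence negligible against the deviation scale $|\rho-1|\entropy t\asymp\log n$, so the per-stage annealed bound is indeed uniform and the bootstrap closes.
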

	\begin{proof}
		The proof follows by a straightforward adaptation of the proof of \cite[Theorem 3]{BP} which, in turn, is an adaptation of the proof of \cite[Theorem 4]{BCS2}. 
		Indeed, all vertex out-degrees have the same law, $\bin(n,p)$, and the proof in the above mentioned references does not rely on the details of the graph structure but only on the out-degree distribution and the fact that the random walk is \whp non-backtracking on the timescale $\log(n)$.
			\end{proof}
	This result has the same shape of Eq.~\eqref{eq:sub-cutoff} and is the core of the cutoff result. In fact, it is easy to observe that $\theta={\rm e}^{-\entropy \tent}=n^{-1}$ provides a threshold for the concentration of the mass of the paths with length $\tent$. That specific choice of $\theta$ is not covered by the theorem above, but in \cite{BP} it is possible to find a quenched CLT refinement, valid in this critical regime, to describe the cutoff window under a suitable hypothesis.\\
	
	The second ingredient is provided by the following family of paths, which allows us to capture typical features of SRW paths.
	\begin{definition}[Nice paths] \label{def:nice-paths}
		Let $\eps \in (0,1)$, and
		\begin{equation} \label{eq:he}
			\h\coloneqq 2\eps \tent, \quad \quad \s\coloneqq(1-\eps)\tent, \quad \quad t\coloneqq\s+\h+1=(1+\eps)\tent+1.
		\end{equation}
		We say that a path $\pat=(x,x_1,x_2,\dots,x_{t-1},y)$ of length $t$ from $x$ to $y$ is \emph{nice} if:
		\begin{enumerate} 
			\item[(i)] the entire path is such that $\mass(\pat)\le \frac{1}{n \log^3 n}$;
			\item[(ii)] the sub-path $(x_{\s+1},\dots,x_{t-1},y)$ is the unique path in $G$ of length at most $\h$ from $x_{\s}$ to $y$; 
			\item[(iii)] the sub-path $(x,x_1,\dots,x_{\s})$ is contained in the random tree $\mathcal{T}_x(\s)$ constructed as follows:\vspace{0.2cm} \\
			\emph{Fix a realization of $G$ and a root node $x \in V$.
			Let $\mathcal{G}^0=\mathcal{T}^0:=\{x\}$.  Then, for $\ell \ge 1$:
			\begin{enumerate}
				\item[(1)] Let $\mathcal{E}^{\ell}\coloneq\{(v,y) : v \in \mathcal{G}^{\ell-1}, y \in \mathcal{B}^+_v(1)\setminus \mathcal{G}^{\ell-1}  \}$, 
				be the set of edges which have not been visited by the first $\ell -1$ 
				iterations,  and with tails in $\mathcal{G}^{\ell-1}$. 
				\item[(2)] Choose $e=(v,y)\in \mathcal{E}^{\ell}$ such that, if $\pat_{x,v}$ is the unique path from $x$ to $v$ in $\mathcal{G}^{\ell-1}$,
					$\mass(\pat_{x,v}) (D^+_{v})^{-1}$ is not below ${\rm e}^{-(1+\eps)\entropy \s}$ and it is maximal among $(v,y) \in \mathcal{E}^{\ell}$,
					and $v$ is at distance at most $\s - 1$ from $x$
				(use a deterministic criterion to break ties).\\
				If such edge does not exist, stop the procedure and set $\mathcal{T}_x(\s)\coloneqq\mathcal{T}^{\ell-1}$; 
				\item[(3)] Generate $\mathcal{G}^\ell$ by adding $e$ to 
				$\mathcal{G}^{\ell-1}$ ;
				\item[(4)] Generate $\mathcal{T}^\ell$ by adding $e$ to  $\mathcal{T}^{\ell-1}$ if it results in a tree, 
				otherwise set $\mathcal{T}^{\ell}=\mathcal{T}^{\ell-1}$.
			\end{enumerate}}
		\end{enumerate}
	For every fixed $x,y \in V$ we define 
	\begin{equation}
	P_{\rm nice}^t(x,y)\coloneqq\text{prob. of 
		following a nice path $x \rightsquigarrow y$ of length $t$}.
	\end{equation}
		\end{definition}
	As mentioned above, nice paths \whp host typical trajectories of the SRW:
	\begin{proposition} \label{prop:nice-paths}
		Fix $\beta>1$ and $t=\beta\tent$. Then, uniformly in a starting position in $\veps$, defined in  Eq.\ \eqref{eq:Veps}, the SRW \whp follows nice paths. More precisely, 
        $$\max_{x \in \veps} \Big(1-\sum_{y \in V}P_{\rm nice}^t(x,y)\Big) = \op(1).$$
	\end{proposition}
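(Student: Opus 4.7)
I would decompose the complement of the event $\mathcal{N}_x=\{(X_0,\dots,X_t)\text{ is nice}\}$ into the three sub-events corresponding to failures of conditions (i), (ii), (iii) in Definition~\ref{def:nice-paths} and bound each separately via a union bound. Throughout, I work on the graph event $\mathcal{G}_\eps\cap\mathcal{V}_\eps$ of Lemma~\ref{lemma:tree-like}, which holds with high probability. The key structural remark is that the rewiring preserves all out-degrees of $G$, so Theorem~\ref{thm:LLN} and Lemma~\ref{lemma:tree-like} take, for the DBM $G$, the same quantitative form as for a single \er component. Consequently, the nice-path argument of \cite{BP} transports to the present setting with only the additional task of checking that the $O(\alpha\tent)$ inter-community jumps performed on the timescale $\beta\tent$ do not spoil the estimates.

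\textbf{Conditions (i) and (ii).}
Condition (i) is a direct application of Theorem~\ref{thm:LLN}: choosing $\theta=1/(n\log^3 n)$ yields $\log\theta=-(1+o(1))\entropy\tent=-(\beta^{-1}+o(1))\entropy t$, so the role of $\rho$ is played by $\beta^{-1}<1$, giving $\max_{x\in V}\quenchG_x(\mass(X_0,\dots,X_t)>\theta)=\op(1)$. For condition (ii), on $\mathcal{G}_\eps$ every $v\in\veps$ has $\mathcal{B}^+_v(4\eps\tent)$ equal to a tree, so any out-path from $v$ of length at most $\h=2\eps\tent$ is uniquely determined in $G$ by its endpoint. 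It therefore suffices to show $\quenchG_x(X_\s\in\veps)=1-\op(1)$ uniformly in $x\in\veps$. This can be derived by iterating the quenched estimate $\max_{y\in V}\quenchG_y(X_{2\eps\tent}\notin\veps)\le 2^{-2\eps\tent}$ of Lemma~\ref{lemma:tree-like} over the $\lceil\s/(2\eps\tent)\rceil=O(1)$ consecutive windows composing $[0,\s]$ via the Markov property, together with a trivial first-moment bound showing $|V\setminus\veps|=\op(n)$.

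\textbf{Condition (iii): the main obstacle.}
The walk stays in $\mathcal{T}_x(\s)$ throughout $[0,\s]$ provided that, for every $\ell\le\s$, (a) $\mass(X_0,\dots,X_\ell)\ge {\rm e}^{-(1+\eps)\entropy\s}$, and (b) the edge $(X_{\ell-1},X_\ell)$ does not close a cycle inside the already-explored portion of $G$. Since $\ell\mapsto\mass(X_0,\dots,X_\ell)$ is non-increasing, (a) collapses to $\mass(X_0,\dots,X_\s)\ge{\rm e}^{-(1+\eps)\entropy\s}$; applying Theorem~\ref{thm:LLN} with $t=\s$ and $\theta={\rm e}^{-(1+\eps)\entropy\s}$ yields $\rho=1+\eps>1$, so the opposite inequality has quenched probability $\op(1)$. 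For (b), the degree concentration of Proposition~\ref{prop:concentration-degrees} shows that each step selects a specific non-tree edge with probability $O((\lambda\log n)^{-1})$, while on $\mathcal{G}_\eps$ the total number of cycle-closing edges inside $\mathcal{B}^+_x(\s)$ is $O(1)$ per vertex; a union bound over the $\s$ steps then yields a cycle-closing probability of $\op(1)$. The genuinely delicate point---and the main obstacle---is that $\mathcal{T}_x(\s)$ is built greedily, so the events \{walk stays inside the tree\} and \{tree construction covers the walk's edges\} are non-trivially coupled. I would deal with this exactly as in \cite{BP}, by coupling the walk with the breadth-first exploration defining $\mathcal{T}_x(\s)$ and verifying that the occasional inter-community jumps only amount to following specific out-edges with the same quenched transition law, so they do not invalidate the coupling.
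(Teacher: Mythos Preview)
Your approach is the same as the paper's: decompose into failures of (i), (ii), (iii), dispatch (i) via Theorem~\ref{thm:LLN} with $\theta=(n\log^3 n)^{-1}$, and handle (ii)--(iii) through the tree-like structure of Lemma~\ref{lemma:tree-like}, ultimately deferring the fine points to \cite{BP}. The paper is in fact terser than you, simply asserting that Lemma~\ref{lemma:tree-like} covers (ii) and (iii) and then writing the single inequality that reduces to (i).

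One point in your sketch needs correcting. In (iii)(b) you invoke $\mathcal{G}_\eps$ to claim that the number of cycle-closing edges inside $\mathcal{B}^+_x(\s)$ is $O(1)$, but $\mathcal{G}_\eps$ only controls out-neighborhoods of depth $4\eps\tent$, not depth $\s=(1-\eps)\tent$, which is much larger. The actual mechanism in \cite{BP} is different: the mass threshold ${\rm e}^{-(1+\eps)\entropy\s}$ in the greedy construction forces the exploration $\mathcal{G}^\ell$ to terminate after at most $\s\cdot{\rm e}^{(1+\eps)\entropy\s}=n^{1-\eps^2+o(1)}$ edges have been revealed, and a first-moment bound on a sublinear-size subset of a sparse random digraph then gives tree-excess $O(1)$ with high probability. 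Once you substitute this for the appeal to $\mathcal{G}_\eps$, your outline of (iii) matches the argument exactly.
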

	\begin{proof}
		Thanks to Lemma \ref{lemma:tree-like}, the requirements (ii) and (iii) of Definition \ref{def:nice-paths} are \whp satisfied. Then, letting 
        $\theta=({n \log^3 n})^{-1}$,
    	\begin{equation}
		\max_{x \in \veps} \Big(1-\sum_{y \in V}P_{\rm nice}^t(x,y)\Big) 
		\le \max_{x \in \veps} \quenchG_x\left(\mass(X_0,X_1,\dots,X_t)>\theta\right)+\op(1),
		\end{equation}
		and the \lhs (namely, the cost of requirement (i) ) vanishes by Theorem \ref{thm:LLN}.
	\end{proof}
	\subsection{Bounds on the total variation profile}
    With the latter tools at hand and with the help of Proposition \ref{prop:drop-positive-part} below, which is a rewriting of \cite[Prop. 3]{BP}, it is possible to bound effectively the distance to equilibrium.
	
	\begin{proposition} \label{prop:drop-positive-part} 
		Let $\alpha^{-1}\lesssim \tent$, $x\in V$, and $t = \s + \h + 1$ as in Definition \ref{def:nice-paths}.
		Then,
				\begin{equation}
			\p\left(
			P_{\rm nice}^t(x,y)
			\le (1+\delta)\nu_{c(x)}(y)+\frac{\delta}{n}, \, \forall x, \in \veps, \forall y \in V,
			\right)=1-o(1), \qquad \forall \delta>0,
		\end{equation}
		where $\semi^{\s+1}(\cdot,\cdot)$ is defined in Eq.~\eqref{eq:Q-def} and, for $i \le m$,
		\begin{equation} \label{eq:nu-i}
			\nu_i(y) \coloneqq \frac1n \sum_{w \in V}{\semi^{\s+1}(i,c(w)) }P^{\h}(w,y).
		\end{equation} 
	\end{proposition}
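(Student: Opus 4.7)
The plan is to split a generic nice path at time $\s+1$ and estimate the two resulting pieces separately. Writing such a path of length $t$ as $(x,x_1,\dots,x_{\s},w,x_{\s+2},\dots,y)$ with $w=x_{\s+1}$, condition~(ii) of Definition~\ref{def:nice-paths} forces the tail $(w,x_{\s+2},\dots,y)$ to be the unique $G$-path of length $\h$ from $w$ to $y$, so its quenched mass is bounded above by $P^{\h}(w,y)$. Summing over the intermediate vertex $w$ yields the decomposition
\begin{equation}\label{eq:nice-decomp-sketch}
P^t_{\rm nice}(x,y) \le \sum_{w \in V} \mu_x(w)\, P^{\h}(w,y),
\qquad
\mu_x(w) \coloneqq \sum_{\substack{v \in \mathcal{T}_x(\s)\\ v \to w}} \frac{\mass(\pat_{x,v})}{D_v^+},
\end{equation}
reducing the problem to an estimate on the time-$(\s+1)$ nice-prefix mass $\mu_x$.

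The core step is then to show that, w.h.p.\ uniformly in $x \in \veps$ and $w \in V$,
\begin{equation}\label{eq:mu-bound-sketch}
\mu_x(w) \;\le\; (1+\tfrac{\delta}{2})\,\frac{\semi^{\s+1}(c(x),c(w))}{n} \;+\; \frac{\eta}{n^2},
\end{equation}
for a suitable $\eta=\eta(\delta)>0$. I would prove \eqref{eq:mu-bound-sketch} by combining two ingredients. First, by Propositions~\ref{prop:nice-paths} and~\ref{prop:quenched-law}, almost all of the quenched mass at time $\s+1$ is carried by nice prefixes, so $\mu_x(V_j) \le (1+o(1))\,\semi^{\s+1}(c(x),j)$ for each community $j$, pinning down the correct total mass on each block. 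Second, the distribution $\mu_x$ is essentially uniform within each block: by the lower-mass threshold in Definition~\ref{def:nice-paths}(iii), $\mathcal{T}_x(\s)$ has at most $e^{(1+\eps)\entropy\s} = n^{1-\eps^2 + o(1)}$ leaves, each with $\Theta(\lambda\log n)$ out-neighbors that, by the tree-excess bound of Lemma~\ref{lemma:tree-like} and the independence of the rewired edges in the DBM construction, are essentially independent samples in the appropriate destination communities; a first-moment argument over the sites $w$ that could absorb anomalously much mass then delivers \eqref{eq:mu-bound-sketch}.

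Once \eqref{eq:mu-bound-sketch} is in place, plugging it back into \eqref{eq:nice-decomp-sketch} gives
\begin{equation*}
P^t_{\rm nice}(x,y) \;\le\; (1+\tfrac{\delta}{2})\,\nu_{c(x)}(y) \;+\; \frac{\eta}{n^2}\sum_{w \in V} P^{\h}(w,y),
\end{equation*}
and the residual sum is $O(n\pi(y)) = O(1)$ by Theorem~\ref{thm:pi-char} and Proposition~\ref{prop:concentration-degrees}, so after rescaling $\eta$ in terms of $\delta$ the second term is at most $\delta/n$, as needed.

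The hard part will be the spreading estimate \eqref{eq:mu-bound-sketch}: controlling $\mu_x(w)$ pointwise in $w$, rather than only in total mass per block, requires a careful count of how many leaves of $\mathcal{T}_x(\s)$ can share a common out-neighbor inside a given community. This is where the delicate random-graph computations enter, and it is the step that parallels the technical core of~\cite[Prop.~3]{BP} for the single-community setting, adapted here so that the block-labeling of the destinations matches the kernel $\semi^{\s+1}(c(x),\cdot)$ obtained from the annealed analysis in Section~\ref{suse:annealed}.
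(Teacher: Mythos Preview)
Your decomposition has a genuine gap at the pointwise estimate on $\mu_x(w)$. By the tree construction in Definition~\ref{def:nice-paths}(iii), every vertex $v$ of $\mathcal{T}_x(\s)$ at depth $\s$ satisfies $\mass(\pat_{x,v}) \ge e^{-(1+\eps)\entropy\s} = n^{-(1-\eps^2)}$, so $\mu_x$ is supported on at most $n^{1-\eps^2+o(1)}$ vertices (the out-neighbors of these leaves). For any $w$ in that support, the single leaf $v$ with $v\to w$ already contributes $\mu_x(w) \ge \mass(\pat_{x,v})/D_v^+ \ge n^{-(1-\eps^2)}/(C_2\lambda\log n) \gg n^{-1}$. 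Hence the bound $\mu_x(w)\le (1+\tfrac{\delta}{2})\,n^{-1}\semi^{\s+1}(c(x),c(w))+\eta n^{-2}$ fails for \emph{every} $w$ in the support of $\mu_x$, not merely for a few anomalous sites; no first-moment or union-bound argument can rescue it. The underlying obstruction is that the prefix length $\s=(1-\eps)\tent$ lies strictly below the entropic time, so the prefix mass is concentrated on $o(n)$ vertices and cannot be close to uniform on any block.

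The paper circumvents this by never seeking a pointwise bound on the prefix. It conditions on the partial environment $\mathcal{F}$ generated by \emph{both} $\mathcal{T}_x(\s)$ and the backward in-neighborhood $\mathcal{B}_y^-(\h)$, so that the only unrevealed randomness is the family of edge indicators $\one_{\{z\to v\}}$ joining the two boundaries. Their conditional means are $p_{zv}/(np)(1+o(1))$, and Proposition~\ref{prop:quenched-law} for the prefix mass on each community then gives $\E[P_{\rm nice}^t(x,y)\mid\mathcal{F}]\le (1+o(1))\,\nu_{c(x)}(y)$. Concentration around this conditional mean is a Bernstein-type inequality (as in \cite[Prop.~3]{BP}), whose crucial input is that each individual summand equals a \emph{full} nice-path mass $\mass(\pat_{x,z})\,\tfrac{1}{D_z^+}\,\mass(\pat_{v,y})\le (n\log^3 n)^{-1}$ by condition~(i). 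It is precisely this small-increment bound on the whole path that is lost when you split into a prefix and a suffix and try to control them separately.
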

	\begin{proof}
    The proof mimics \cite[Prop. 3]{BP}.
		Let $x \in \veps$, as defined in Eq.\ \eqref{eq:Veps}, and let $\mathcal{F}$ denote the partial environment generated by the tree $\mathcal{T}_x(\s)$ and $\mathcal{B}^-_y(\h)$, the in-neighborhood of $y$ of depth $\h$. We split $P_{\rm nice}^t(x,y)$ in $m$ different addends. 
		For $i=1,\dots, m$, let
	$$P_{{\rm nice},i}^t(x,y) = 
	\sum_{z \in V^+_\mathcal{F}(i)} \sum_{v \in V^-_\mathcal{F}} 
	\mass(\pat_{x,z}) \frac{1}{D^{+}_z}\mass(\pat_{v,y})
	\one_{\{z \to v\}}\one_{\{\pat \textup{ is a nice path}\}},$$
	where $V^+_\mathcal{F}(i)$ is the set of vertices 
    at depth $\s$ in $\mathcal{T}_x(\s)\cap V_i$, and $V^-_{\mathcal{F}}$ is the set of vertices in $\partial \mathcal{B}^-_y(\h)$ such that there exists a unique path of length $\h$ to y.
		Then, it holds $P_{\rm nice}^t(x,y)=\sum_{i \le m}P_{{\rm nice},i}^t(x,y).$
		Setting $\p_{\mathcal{F}}(\cdot)\coloneqq\p(\cdot\,|\mathcal{F})$, we have that, for $z,v \in V$, 
			\begin{equation}
                \E\left[\frac{\one_{\{z \to v\}}}{D^{+}_z}\,\big|\mathcal{F}\right]=\E\left[\frac{\one_{\{z \to v\}}}{D^{+}_z}\,\big|\mathcal{F},\one_{\{z \to v\}}=1\right]\p\left((z,v)\in E\right)= \frac{p_{zv}}{np}(1+o(1)).
            \end{equation}
		Then,
		\begin{align*}
			\E[P_{{\rm nice},i}^t(x,y)\,|\, \mathcal{F}\,]
			&\le  \sum_{v \in V^-_\mathcal{F}}\sum_{z \in V^+_\mathcal{F}(i)} \mass(\pat_{x,z}) 
			\E\left[\frac{\one_{\{z \to v\}}}{D^{+}_z}\,\big|\mathcal{F}\right]\mass(\pat_{v,y})\\
			&\le  \sum_{v \in V^-_\mathcal{F}}\sum_{z \in V^+_\mathcal{F}(i)}\mass(\pat_{x,z}) \frac{p_{zv}}{np}\mass(\pat_{v,y})(1+o(1)).
		\end{align*}
		Since the random walk performs \whp a nice path  with support concentrated on the vertices of the tree $\mathcal{T}_x(\s)$ (Proposition \ref{prop:nice-paths}), and thanks to Proposition \ref{prop:quenched-law}, for $i \le m$, \whp it holds
    	\begin{equation}
		      \left|\sum_{z \in V^+_\mathcal{F}(i)} \mass(\pat_{x,z}) 
		      - \semi^ {\s} (c(x),i) \right|=o(1).
		\end{equation}
        Then, observing that $\frac{p_{zv}}{p}=\semi(c(z),c(v))$, it holds
			\begin{align*}
			{\E[P_{{\rm nice},i}^t(x,y)\,|\, \mathcal{F}\,]}
			&\le \semi^ \s (c(x),i)
			\sum_{v \in V^-_\mathcal{F}}  \frac{\semi(i,c(v))}{n}\mass(\pat_{v,y})(1+o(1)) \\
			&\le \semi^{\s} (c(x),i)  \sum_{w \in V} \frac{\semi (i,c(w))}{n} P^{\h}(w,y)(1+o(1)).
		\end{align*}
		Summing over $i \le m$, we conclude
		\begin{align} \label{eq:conditional-average}
			\E[P_{\rm nice}^t(x,y)\,|\, \mathcal{F}\,] \le \sum_{w \in V}\frac{\semi^{\s+1}(c(x),c(w)) }{n} P^{\h}(w,y)(1+o(1))= \nu_{c(x)}(y)(1+o(1)).
		\end{align}
		The proof then continues as in \cite[Prop. 3]{BP}, employing Eq.~\eqref{eq:conditional-average} 
        and a suitable Bernstein's concentration inequality,
        and averaging over the partial environment $\mathcal{F}$ to obtain the thesis.
	\end{proof}

	We now proceed with our analysis, pursuing an upper bound and a lower bound for the distance to stationarity at times $t$ greater than $\beta \tent$ for $\beta>1$, and smaller than $\beta\tent$ for $\beta<1$, respectively.
    
	\subsubsection{Upper bound} We start stating the following straightforward lemma.    
    \begin{lemma}
	\label{lemma:ciritcal-1/m}
	Let $\alpha^{-1}=C\tent$. Then for each $j=1,\dots, m$, it holds $\pi(V_j)=\frac1m(1+\op(1))$.
\end{lemma}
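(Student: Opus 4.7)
The plan is to exploit the stationarity of $\pi$ together with the approximation of short-time community occupations given by Proposition \ref{prop:quenched-law}. The basic identity is
$$\pi(V_j) = \sum_{x\in V} \pi(x)\, \quenchG_x(X_t \in V_j)$$
for any $t\ge 0$, so if I can show that $\quenchG_x(X_t \in V_j) = \frac{1}{m}+\op(1)$ uniformly in $x$ for some admissible $t$, then $\pi(V_j) = \frac{1}{m}+\op(1)$ follows by averaging.

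First I would choose an intermediate time $t$. In the critical regime $\alpha^{-1}=C\tent \asymp \log(n)/\log\log(n)$, so any $t$ with $\alpha^{-1}\ll t \ll \sqrt{n}\log(n)^{-2}$ is fine; concretely I take $t = \lfloor \alpha^{-1}\log\log(n)\rfloor$. By Proposition \ref{prop:quenched-law}, for such $t$
$$\max_{j \le m}\max_{x \in V} \left|\quenchG_x(X_t \in V_j) - \semi^t(c(x),j)\right| = \op(1).$$
Next, I use the explicit form of $\semi^t$ in \eqref{eq:Q-def}:
$$\semi^t(i,j)=\frac{1}{m}+\frac{m\one_{i=j}-1}{m}\Bigl(1-\tfrac{m\alpha}{m-1}\Bigr)^{t}.$$
Because $\alpha=o(1)$ while $t\alpha \gtrsim \log\log(n)\to\infty$, the second term vanishes uniformly in $i,j$, so $\semi^t(i,j) \to 1/m$. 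Combining the two displays gives $\max_x|\quenchG_x(X_t \in V_j) - 1/m|= \op(1)$.

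Finally, plugging this into the stationarity identity yields
$$\left|\pi(V_j)-\tfrac{1}{m}\right| \le \sum_{x\in V} \pi(x)\,\left|\quenchG_x(X_t \in V_j)-\tfrac{1}{m}\right| \le \max_{x\in V}\left|\quenchG_x(X_t \in V_j)-\tfrac{1}{m}\right| =\op(1),$$
which is precisely the claim. There is essentially no ``hard part'' here: Proposition \ref{prop:quenched-law} has already absorbed all the graph-theoretic work, and the only care needed is in checking that the regime $\alpha^{-1}=C\tent$ admits a time $t$ that is simultaneously (i) short enough for the annealed-walk approximation to be valid, and (ii) long enough for the mean-field chain with kernel $\semi$ to have mixed on the community indices, which is what the choice $t = \lfloor \alpha^{-1}\log\log(n)\rfloor$ achieves.
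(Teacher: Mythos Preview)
Your proof is correct and follows essentially the same approach as the paper: both use stationarity to write $\pi(V_j)$ as a $\pi$-average of $\quenchG_x(X_t\in V_j)$, then invoke Proposition~\ref{prop:quenched-law} at an intermediate time $t$ satisfying $\alpha^{-1}\ll t\ll \sqrt{n}\log(n)^{-2}$. The only cosmetic difference is the specific choice of $t$ (the paper takes $t=(\log n)^2$ rather than $t=\lfloor\alpha^{-1}\log\log n\rfloor$), but both lie in the admissible window.
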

\begin{proof}
	By definition of stationarity, for each $j=1,\dots,m$, and for $t=(\log(n))^2$,
	\begin{equation}
		\left|\pi(V_j)-\frac1m\right|
		\le \sum_{x \in V}\pi(x)\left|\quenchG_x(X_t \in V_j)-\frac1m\right| \le \max_{x \in V} \left|\quenchG_x(X_t \in V_j)-\frac1m\right|,
	\end{equation}
	which is $\op(1)$ by Proposition \ref{prop:quenched-law}.
\end{proof}

    Choose now a time $t \ge \beta \tent$, for $\beta>1$. Applying Proposition \ref{prop:drop-positive-part}, and later Proposition \ref{prop:nice-paths}, for every $\delta>0$ it holds
	\begin{equation} 
		\begin{split}\label{eq:drop}
		\max_{x \in \veps}\|P^{t}(x,\cdot)-\nu_{c(x)}\|_{\textup{TV}}&\le
		\max_{x \in \veps}\sum_{y \in V}\left[(1+\delta)\nu_{c(x)}(y)+\frac{\delta}{n}-P_{\rm nice}^t(x,y)\right]\\
		&\le \max_{x\in\veps}\left(1-\sum_{y\in V}P_{\rm nice}^t(x,y)\right)+2\delta=2\delta+\op(1).
		\end{split}
	\end{equation}

	Then, thanks to Lemma \ref{lemma:tree-like} and Eq.~\eqref{eq:drop}, for $\ell=3 \log \log(n)$,
	\begin{equation} \label{eq:bound}
		\max_{x \in V}\|P^{t+\ell}(x,\cdot)-\nu_{c(x)}\|_{\textup{TV}} \le \quenchG_x(X_\ell \notin \veps) + \max_{x \in \veps}\|P^{t}(x,\cdot)-\nu_{c(x)}\|_{\textup{TV}} =\op(1).
	\end{equation}

Since, for every $w \in V$, it holds $\sum_{i =1}^m{\semi^ {\s+1}(i,c(w))}=1$, we can define
	\begin{equation}\label{eq:nu}
		\nu \coloneqq \frac 1{m} \sum_{i =1}^m\nu_i  =  \frac{1}{mn} \sum_{w \in V}P^\h(w,\cdot).
	\end{equation} 
	 Employing Lemma \ref{lemma:ciritcal-1/m} and later Eq.~\eqref{eq:bound}, it holds
		\begin{equation} \label{eq:nu-pi}
			\begin{split}
				\tv { \nu-\pi} 
				& = \tv { \frac1m\sum_{i=1}^m \nu_i-\sum_{z \in V} \pi(z)P^{(1+\eps)\tent}(z,\cdot)} \\
				& = \tv { \sum_{i=1}^m\sum_{z \in V_i} {\pi(z)} \left[\nu_i-P^{(1+\eps)\tent}(z,\cdot)\right]}+\op(1) \\
				& \le \max_{i \le m}\max_{z \in V_i} \tv {P^{(1+\eps)\tent}(z,\cdot)- \nu_{c(z)}}=\op(1).
			\end{split}
		\end{equation}
    This means that $\nu$ constitutes a good proxy for $\pi$. 
    Then, by the triangle inequality, and thanks to Eqs.~\eqref{eq:bound} and \eqref{eq:nu-pi}, we can conclude the following upper bound
\begin{equation}  \label{eq:bound2}
    \begin{split}
	\max_{x \in V}\tv{P^{t+\ell}(x,\cdot)-\pi} 
    &\le  \max_{x \in V}\tv{P^{t+\ell}(x,\cdot)-\nu_{c(x)}}+\max_{i \le m}\tv{\nu_i-\nu}+\tv{\nu-\pi}\\
    & \le \max_{i \le m}\tv{\nu_i-\nu}+\op(1).
    \end{split}
\end{equation}
    
	\subsubsection{Lower bound} Let $t \le \beta \tent$, for $\beta<1$.
    We are going to show that the law of $X_t$ at a time $t \le {\beta\tent}$, is \whp concentrated on a set with cardinality $o(n)$. 
	To this end, let $\theta=n^{-\beta(2-\beta)}$ and let
	$$
	S_x\coloneqq \left\{y \in V : \text{ there exists a path $\pat$ of length $t$ such that } \mass(\pat)\ge \theta\right\}.
	$$
	We have $|S_x|\le \theta^{-1}=n^{\beta(2-\beta)}=o(\frac{n}{\log(n)})$, since, for $\beta<1$, we have $\beta(2-\beta)<1$. Moreover, it holds 
	$
	-\frac{\log \theta}{\entropy t}=2-\beta>1,
	$ and we conclude the following lower bound:
	\begin{equation} \label{eq:lower-bound}
		\begin{split}
			\min_{x \in V} \tv { \quenchG_x(X_t \in \cdot)-\pi} 
			& \ge \min_{x \in V} \left[\quenchG_x(X_t \in S_x)-\pi(S_x)\right]\\
			& \ge \min_{x \in V}\quenchG_x\left(\mass(X_0,\dots,X_t) \ge n^{-\beta(2-\beta)}\right)-\max_{x \in V}\pi(S_x)\\
			&=1-\op(1),
		\end{split}
	\end{equation}
	where the last inequality holds combining Theorem \ref{thm:LLN} with the observation that $$\max_{x \in V}\pi(S_x)=\op(1)\,,$$ thanks to Theorem \ref{thm:pi-char} and Proposition \ref{prop:concentration-degrees}.

\section{Proof of the main result }\label{sec:proof}
\subsection{Supercritical regime}
In this regime, the proof of Theorem \ref{thm:main} is split into two parts, depending on the chosen timescale. We first analyze the case $t \asymp \tent$, proving Eq.~\eqref{eq:sup-cutoff}, and then move to the case $t \asymp \alpha^{-1}$, proving Eq.~\eqref{eq:sup-meta}.
    \subsubsection{Relaxation to a local equilibrium}
 Let us first assume
$\tent\ll\alpha^{-1} \ll \sqrt{n}\log(n)^{-2}$. We will sometimes commit a slight abuse of notation by lifting $\pi_i$ to a probability measure on the entire vertex set $V$.
Let $t=\beta\tent$ for some $\beta<1$. 
By Theorem \ref{thm:cutoff},
\begin{equation} \label{eq:bound-pi_i}
	\min_{i \le m}\min_{x\in V_i}\|\quench^{G_i}_x(X^{i}_t\in\cdot)-\pi_{i}\|_{\rm TV}=1-\op(1)\,,
\end{equation}
In particular, for every $\delta>0$ and $x \in V_i$, it must hold 
\begin{equation}
    \frac 12 \sum_{y \in V}\left|\quench^{G_i}_x(X^i_t=y)-\frac1m\pi_i(y)\right|\ge 1-\frac{m-1}{2m}-\delta + \op(1),
\end{equation} otherwise
 there would exists a $\delta>0$ such that
\begin{equation}
\begin{split}
    \left\|\quench^{G_i}_x(X^i_t\in\cdot)-\pi_i\right\|_{\rm TV}
    & \le \frac 12 \sum_{y \in V}\left|\quench^{G_i}_x(X^i_t=y)-\frac1m\pi_i(y)\right| + \frac12\cdot\frac{m-1}{m}\sum_{y \in V}\pi_i(y)\\
       &<1-\frac{m-1}{2m}-\delta+\frac{m-1}{2m}+\op(1)=1-\delta+\op(1),
\end{split}
\end{equation}
which is in contradiction with \eqref{eq:bound-pi_i}.
As a consequence, using the characterization of $\pi$ given in Theorem \ref{thm:pi-approximation}, and using that, by Proposition \ref{prop:jump-time}, 
\begin{equation}
\begin{split}
    \left\|\quenchG_x(X_t\in\cdot)-\quench^{G_i}_x(X_t^i\in \cdot)\right\|_{\rm TV} &
    \le \max_{i \le m}\max_{x\in V_i}\check\quench_x(X_t{\neq} X_t^{i})\\
    &\le \max_{i \le m}\max_{x\in V_i}\quench^G_x(\tauj\le t)=\op(1),
    \end{split}
\end{equation}
we have that, for every $\delta > 0$ and $x \in V_i$, 
\begin{equation} \label{eq:lower-1}
\begin{split}
    \left\|\quenchG_x(X_t\in\cdot)-\pi\right\|_{\rm TV}
    &=\left\|\quench^{G_i}_x(X^i_t\in\cdot)-\frac1m \sum_{j =1}^m \pi_j\right\|_{\rm TV}+\op(1)\\
    &=\frac 12 \sum_{y \in V}\left|\quench^{G_i}_x(X^i_t=y)-\frac1m\pi_i(y)\right| + \sum_{j \neq i}\frac{1}{2m}\sum_{y \in V}\pi_j(y)+\op(1)\\
    & \ge 1-\frac{m-1}{2m}-\delta+\frac{m-1}{2m}+\op(1)=1-\delta+\op(1).
\end{split}
\end{equation}
Let now $\beta>1$. By definition of total variation distance
\begin{equation} \label{eq:lower-bound-1-alpha}
\begin{split}
	\min_{i \le m}\min_{x\in V_i}\|\quenchG_x(X_t\in\cdot)-\pi\|_{\rm TV} &\ge\min_{i \le m}\min_{x\in V_i}\left| \quenchG_x(X_t\in V_{i})-\pi(V_{i})\right|
	\\
	&= \frac{m-1}{m}\left(1-\frac{m}{m-1}\alpha\right)^{t-1}+\op(1)\,,
\end{split}
\end{equation}
where we have used Proposition \ref{prop:quenched-law} and the characterization of $\pi$ given in Theorem \ref{thm:pi-approximation}. 
By our choice $t=\beta\tent \ll \alpha^{-1}$, one gets
\begin{equation} \label{eq:lower-bound-m-1}
\min_{i \le m}\min_{x\in V_i}\|\quenchG_x(X_t\in\cdot)-\pi\|_{\rm TV} \ge \frac{m-1}{m}-\op(1)\,.
\end{equation}
For what concerns the upper bound, for $\beta>1$, let us fix some $\gamma\ge 0$, possibly depending on $n$, and $\varepsilon>0$ such that $(1+\varepsilon)\tent+\gamma \le \beta\tent$. By splitting over the vertex on which the SRW sits at time $(1+\varepsilon)\tent$, and over the community of such vertex one gets, for $x \in V$,
\begin{equation}\label{eq:1}
	P^{(1+\varepsilon)\tent+\gamma}(x,\cdot)=\sum_{y \in V}P^{\gamma}(x,y)P^{(1+\varepsilon)\tent}(y,\cdot)=\sum_{i=1}^m\sum_{y \in V_i}P^{\gamma}(x,y)P^{(1+\varepsilon)\tent}(y,\cdot)\,.
\end{equation}
Using Theorem \ref{thm:cutoff-community}, we can write
\begin{align}
	\max_{x \in V} 	\tv{ P^{(1+\varepsilon)\tent+\gamma}(x,\cdot)  - \sum_{i=1}^mP^{\gamma}(x,V_i) \pi_i} =\op(1).
\end{align}
Let us now focus on the case $\tent\ll\alpha^{-1} \ll \sqrt{n}\log(n)^{-2}$. By Proposition \ref{prop:quenched-law} we get
\begin{align}\label{eq:439}
	\max_{x \in V} 	\tv {P^{(1+\varepsilon)\tent+\gamma}(x,\cdot) 
		-\sum_{i=1}^m \semi^\gamma(c(x),i) \pi_i} = \op(1).
\end{align}
Thanks to Theorem \ref{thm:pi-approximation} and recalling the definition in \eqref{eq:Q-def}, we then obtain
\begin{equation} \label{eq:gamma}
	\begin{split}
	\max_{x \in V} 	\tv{ {P^{(1+\varepsilon)\tent+\gamma}(x,\cdot) - \pi} }
	&= \frac12\sum_{i=1}^m    \left|\semi^\gamma(c(x),i)-\frac 1m \right|+\op(1)\\
	& =\frac{(m-1)(1-\frac{m}{m-1}\alpha)^{\gamma} }{m}+\op(1).
	\end{split}
\end{equation}
Taking $\gamma\ll\alpha^{-1}$ and by monotonicity, we conclude that 
\begin{equation}  \label{eq:upper-bound-m-1}
	\max_{x \in V} 	\tv{ {P^{\beta\tent}}(x,\cdot) - \pi } \le\frac{m-1}{m}+\op(1).
\end{equation}

Then, putting together Eqs.~\eqref{eq:lower-1}, \eqref{eq:lower-bound-m-1}, and \eqref{eq:upper-bound-m-1}, Eq.~\eqref{eq:sup-cutoff} follows.
Similarly, in the case $\sqrt{n}\log(n)^{-2}\ll\alpha^{-1} \ll \lambda n \log(n) $, instead of Proposition \ref{prop:quenched-law} and Theorem \ref{thm:pi-approximation}, we use Proposition \ref{prop:quenched-law-new} and Theorem \ref{thm:pi-approximation-new}, leading to the same result.

\subsubsection{Convergence to the global equilibrium}
Let us first assume
$\tent\ll\alpha^{-1} \ll \sqrt{n}\log(n)^{-2}$. Using Eq.~\eqref{eq:lower-bound-1-alpha}, with $t=\beta\alpha^{-1}$, for $\beta>0$,
\begin{equation}
	\min_{i \le m}\min_{x\in V_i}\|\quenchG_x(X_t\in\cdot)-\pi\|_{\rm TV} \ge \frac{m-1}{m}{\rm e}^{-\frac{\beta m}{m-1}}+\op(1)\,.
\end{equation}
For the upper bound, consider now, in Eq.~\eqref{eq:gamma}, $\gamma=t-(1+\eps)\tent \sim\beta\alpha^{-1}$. Then, again by monotonicity, we have
\begin{equation}
	\max_{x\in V}\|\quenchG_x(X_t\in\cdot)-\pi\|_{\rm TV} \le \frac{m-1}{m}{\rm e}^{-\frac{\beta m}{m-1}}+\op(1)\,,
\end{equation}
concluding the proof. Similarly, in the case $\sqrt{n}\log(n)^{-2}\ll\alpha^{-1} \ll\lambda n \log(n) $, instead of Proposition \ref{prop:quenched-law} and Theorem \ref{thm:pi-approximation}, we use Proposition \ref{prop:quenched-law-new} and Theorem \ref{thm:pi-approximation-new} , leading to the same result. This proves Eq.~\eqref{eq:sup-meta}.

\subsection{Subcritical regime}
{The upper bound} in Eq.~\eqref{eq:sub-cutoff} (which is non-trivial only for $\beta>1$) can be obtained observing that in the subcritical case $\alpha^{-1} \ll \tent$, it holds
\begin{equation} 
	{\semi^{\s}(i,\cdot)}=\frac{1}{m}(1+o(1)), \qquad \forall i \le m,
\end{equation}
Then, recalling the definitions in Eqs.~\eqref{eq:nu-i} and \eqref{eq:nu}, it follows $\max_{i \le m}\tv{\nu_{i}-\nu}=\op(1)$. This, plugged in Eq.~\eqref{eq:bound2},
 concludes the upper bound.

The lower bound in Eq.~\eqref{eq:sub-cutoff} (which is non-trivial only for $\beta<1$) is precisely Eq.~\eqref{eq:lower-bound}.
    \subsection{Critical regime}
	We fix $C>0$ and $t=\beta\tent$, for some $\beta>0$, $\beta\neq1$. We want to prove Eq.~\eqref{eq:crit-cutoffmeta}. For $\beta<1$, the bound is, again, precisely given by Eq.~\eqref{eq:lower-bound}.
    Then, we consider $\beta>1$. By a trivial bound and Proposition \ref{prop:quenched-law},
	\begin{equation}
		\begin{split}
			\min_{x \in V}\tv { P^{\beta\tent}(x,\cdot)-\pi }
			 &\ge  \min_{x \in V}\left|P^{\beta\tent}(x,V_{c(x)})-\pi(V_{c(x)})\right|\\
			& =\tfrac{m-1}m \left(1-\tfrac{m}{m-1}\alpha\right)^{\beta \tent}+\op(1)\\
            &=\tfrac{m-1}{m}{\rm e}^{-\frac{\beta}{C}\frac{m}{m-1}}(1+o(1))+\op(1).
		\end{split}
	\end{equation}
	In the what follows we will prove the following result.
    \begin{lemma} \label{lemma:desired}
        For every $0<\eps<\beta-1$, 	
            \begin{equation} 
		              \max_{x \in V}\tv { P^{\beta\tent}(x,\cdot)-\pi } 
                      \le \tfrac{m-1}{m}{\rm e}^{-\frac{\beta}{C}\frac{m}{m-1}}(1+o(1))+   \frac{6m\eps}{C}+\op(1).
                \end{equation}
    \end{lemma}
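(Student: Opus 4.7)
The plan is to repeat the nice-path decomposition of Section \ref{sect:entropic-method}, but with the split parameters rescaled so that the total path length equals $\beta\tent$ rather than $(1+\eps)\tent + 1$. This lets the main exponential factor $e^{-m\beta/((m-1)C)}$ be extracted directly from $\semi^{s+1}(i,j)$ at the appropriate depth $s$, rather than losing a factor $e^{-2m\eps/((m-1)C)}$ by stopping the analysis at time $(1+\eps)\tent+1$. Concretely, set $h' := 2\eps\tent$ and $s' := \beta\tent - h' - 1 - \ell$ with $\ell = 3\log\log(n)$ (as in Eq.~\eqref{eq:bound}); the assumption $0<\eps<\beta-1$ ensures $s' = \Theta(\tent)$. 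Rebuild the random tree $\mathcal{T}_x(s')$ of Def.~\ref{def:nice-paths}(iii) and the nice paths of length $s'+h'+1$ with these rescaled parameters. Since Theorem \ref{thm:LLN} holds for any $t = \Theta(\tent)$ and the only $s$-dependent input to Proposition \ref{prop:drop-positive-part} is Proposition \ref{prop:quenched-law} (whose hypothesis $s \ll \sqrt{n}\log^{-2}(n)$ is preserved), the argument leading from Eq.~\eqref{eq:drop} to Eq.~\eqref{eq:bound} carries through verbatim, yielding
\begin{equation*}
    \max_{x \in V}\tv{P^{\beta\tent}(x, \cdot) - \nu'_{c(x)}} = \op(1), \qquad \nu'_i(y) := \frac{1}{n}\sum_{w \in V}\semi^{s'+1}(i, c(w))\, P^{h'}(w, y).
\end{equation*}

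Set $\nu' := \frac{1}{m}\sum_i \nu'_i$. Mimicking Eq.~\eqref{eq:nu-pi} with the new parameters (invoking Lemma \ref{lemma:ciritcal-1/m} to replace $1/m$ by $\pi(V_i)$ up to $\op(1)$) gives $\tv{\nu' - \pi} = \op(1)$, so the triangle inequality reduces the lemma to a deterministic estimate on $\max_i \tv{\nu'_i - \nu'}$:
\begin{equation*}
    \max_{x \in V}\tv{P^{\beta\tent}(x, \cdot) - \pi} \le \max_{i \le m}\tv{\nu'_i - \nu'} + \op(1).
\end{equation*}
Introducing the probability measures $\mu_j := \frac{1}{n}\sum_{w \in V_j}P^{h'}(w, \cdot)$, we have $\nu'_i = \sum_j \semi^{s'+1}(i, j)\mu_j$ and $\nu' = \frac{1}{m}\sum_j \mu_j$; the triangle inequality combined with the explicit formula in Eq.~\eqref{eq:Q-def} yields
\begin{equation*}
    \tv{\nu'_i - \nu'} \le \frac{1}{2}\sum_{j=1}^m \left|\semi^{s'+1}(i, j) - \frac{1}{m}\right| = \frac{m-1}{m}\left(1 - \tfrac{m}{m-1}\alpha\right)^{s'+1}.
\end{equation*}

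With $\alpha^{-1} = C\tent$ and $s' + 1 = (\beta - 2\eps)\tent(1+o(1))$, this factor equals $\frac{m-1}{m}e^{-m(\beta-2\eps)/((m-1)C)}(1+o(1))$. Splitting the exponent as $e^{-m\beta/((m-1)C)}\cdot e^{2m\eps/((m-1)C)}$ and applying the elementary bound $e^u \le 1 + 3u$ valid on $u \in [0,1]$ (i.e.\ for $\eps$ small relative to $C$, which suffices since the statement is vacuous for $\eps \gtrsim 1$) gives $e^{2m\eps/((m-1)C)} \le 1 + \frac{6m\eps}{(m-1)C}$, so that
\begin{equation*}
\tv{\nu'_i - \nu'} \le \frac{m-1}{m}e^{-\frac{m\beta}{(m-1)C}}(1+o(1)) + \frac{6\eps}{C} \le \frac{m-1}{m}e^{-\frac{m\beta}{(m-1)C}}(1+o(1)) + \frac{6m\eps}{C},
\end{equation*}
as claimed. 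The main technical subtlety is justifying the extension of Proposition \ref{prop:drop-positive-part} to $s'$ as large as $(\beta - 2\eps)\tent$, rather than the specific value $(1-\eps)\tent$ used in Def.~\ref{def:nice-paths}; however, since the proof uses $s$ only through the greedy mass-tree construction $\mathcal{T}_x(s)$ and through Proposition \ref{prop:quenched-law}, both of which behave uniformly for $s = \Theta(\tent)$, this extension amounts to a direct inspection of the arguments already given (and of those in \cite{BP}).
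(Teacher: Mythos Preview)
Your argument has a genuine gap in the extension of Proposition~\ref{prop:drop-positive-part} to the depth $s' = (\beta - 2\eps)\tent$. The nice-path machinery of Definition~\ref{def:nice-paths} hinges on the greedy tree $\mathcal{T}_x(s)$ satisfying two competing constraints simultaneously: (a) it must capture typical random-walk trajectories of length $s$, i.e.\ paths of mass $\approx e^{-\entropy s}$, which forces the mass threshold to lie below $e^{-\entropy s}$; and (b) it must have $o(n)$ vertices so that the conditional-environment argument (and the underlying Bernstein bound from \cite{BP}) applies, which forces the threshold to be $\gg 1/n$. With $s = (1-\eps)\tent$ both constraints are met by the threshold $n^{-(1-\eps^2)}$. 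But once $s' > \tent$ --- which is exactly what happens for the small $\eps$ you need to make $6m\eps/C$ small --- the typical mass $e^{-\entropy s'} \approx n^{-(\beta - 2\eps)}$ already lies below $1/n$, so no threshold can satisfy both (a) and (b). Concretely, either $\mathcal{T}_x(s')$ misses most trajectories (and Proposition~\ref{prop:nice-paths} fails), or it would require $\gg n$ vertices, which is impossible in a graph of size $mn$. This is not a matter of ``direct inspection'': the entropic time is precisely the depth at which any exploration tree saturates the graph, and the argument in \cite{BP} does not extend past it.

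The paper circumvents this by never running the nice-path decomposition beyond depth $(1+\eps)\tent$. It splits $\beta\tent = (\beta-1-\eps)\tent + (1+\eps)\tent$, uses Proposition~\ref{prop:quenched-law} on the first piece to replace $P^{(\beta-1-\eps)\tent}(x,V_j)$ by $\semi^{(\beta-1-\eps)\tent}(c(x),j)$, and applies the already-established bound~\eqref{eq:bound} to the second piece. The full exponent $\beta$ then emerges from the semigroup property of $\semi$, and the $\eps$-error comes not from crudely bounding $e^{2m\eps/((m-1)C)}$ but from Lemma~\ref{lemma:distance-eps}, which compares $\frac{1}{n}\sum_{y\in V_k}P^{\h}(y,\cdot)$ to the restricted stationary measure $\pi_{V_k}$.
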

    Since $\eps>0$ can be taken arbitrarily small, this lemma closes the discussion for $\beta>1$, finishing the proof of Theorem \ref{thm:main}.
	    To prove it, it is useful to consider, for every $i \le m$, the measures
$$\pi_{V_i}(x)=\frac{\pi(x)\one_{\{x\in V_i\}}}{\pi(V_i)}\,, \qquad x \in V.
$$
\begin{lemma} \label{lemma:distance-eps} 
	Let $\alpha^{-1}=C \tent $, for $C> 0 $. Then
	\begin{equation}
	\p\left(\max_{i \le m}\tv { \frac1n  \sum_{y \in V_i}P^{\eps \tent}(y,\cdot )-{\pi_{V_i}} }\le \frac{4m\eps }{C}\right)=1-o(1).
	\end{equation}
\end{lemma}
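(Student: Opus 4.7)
The plan is to show near-invariance of $\pi_{V_i}$ under $P^{\eps\tent}$ by exploiting the stationarity of $\pi$, and then to replace the starting distribution $\mu := \frac{1}{n}\sum_{y \in V_i}\delta_y$ (the uniform measure on $V_i$) by $\pi_{V_i}$ at a cost that vanishes as $n \to \infty$. Combined via the triangle inequality and the total-variation contraction of $P^{\eps\tent}$, this will give $\tv{\mu P^{\eps\tent} - \pi_{V_i}} \le \eps/C + \op(1)$, which sits well below $4m\eps/C$ for every $n$ large enough.

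For the near-invariance step, using $\pi = \pi P^{\eps\tent}$ together with the decomposition $\pi = \sum_j \pi(V_j)\pi_{V_j}$, one obtains the exact identity
\begin{equation*}
\pi_{V_i}(x) - [\pi_{V_i} P^{\eps\tent}](x) = \frac{1}{\pi(V_i)}\sum_{j \neq i} \pi(V_j)\,[\pi_{V_j} P^{\eps\tent}](x) \ge 0, \qquad x \in V_i.
\end{equation*}
Because the right-hand side has a definite sign, summing over $x \in V_i$ collapses the total variation distance to a single non-negative quantity:
\begin{equation*}
\tv{\pi_{V_i} P^{\eps\tent} - \pi_{V_i}} = 1 - [\pi_{V_i}P^{\eps\tent}](V_i) = \frac{1}{\pi(V_i)}\sum_{j \neq i}\pi(V_j)\,[\pi_{V_j} P^{\eps\tent}](V_i).
\end{equation*}
Proposition \ref{prop:quenched-law} yields $[\pi_{V_j}P^{\eps\tent}](V_i) = \semi^{\eps\tent}(j,i) + \op(1)$; inserting $\alpha^{-1} = C\tent$ in the explicit form of $\semi$ and using the elementary bound $1-e^{-a} \le a$ gives $\semi^{\eps\tent}(j,i) \le \eps/((m-1)C) + o(1)$ for $j \neq i$. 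Together with $\pi(V_\ell) = 1/m + \op(1)$ from Lemma \ref{lemma:ciritcal-1/m}, the whole right-hand side is bounded by $\eps/C + \op(1)$.

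The remaining ingredient, and the step I expect to be the main obstacle, is $\tv{\mu - \pi_{V_i}} = \op(1)$. Theorem \ref{thm:pi-char} combined with Proposition \ref{prop:concentration-degrees} already implies $\pi_i(x) = (1+\op(1))/n$ uniformly in $x \in V_i$, so $\tv{\mu - \pi_i} = \op(1)$. What is still needed is the uniform approximation $\pi(x) = (1+\op(1))/(mn)$ for $x \in V$, which together with $\pi(V_i) = 1/m+\op(1)$ immediately yields $\pi_{V_i}(x) = (1+\op(1))/n$ and hence $\tv{\pi_i - \pi_{V_i}} = \op(1)$. I plan to derive this pointwise estimate for $\pi$ by extending the Cooper--Frieze argument of \cite{CF} from a single Erd\H{o}s--R\'enyi community to the full DBM: the in-degrees $D_x^-$ in $G$ are concentrated around $\lambda \log n$ uniformly in $x$ by Proposition \ref{prop:concentration-degrees}, and the rewiring preserves the sparse local structure of in-neighborhoods, so the first-order approximation $\pi(x) \approx D_x^-/(m n^2 p)$ should follow from essentially the same contraction argument as in \cite{CF}. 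Carefully tracking the contribution of rewired in-edges and showing that they do not disturb the near-uniform profile of $\pi$ beyond an $\op(1)$ factor is what makes this second step the non-trivial part of the proof.
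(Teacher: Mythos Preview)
Your near-invariance step is correct and clean: the identity $\pi_{V_i}(x)-[\pi_{V_i}P^{\eps\tent}](x)=\tfrac{1}{\pi(V_i)}\sum_{j\neq i}\pi(V_j)[\pi_{V_j}P^{\eps\tent}](x)\ge 0$ on $V_i$ does collapse the TV distance to the single leakage term, and Proposition~\ref{prop:quenched-law} plus Lemma~\ref{lemma:ciritcal-1/m} give the bound $\eps/C+\op(1)$ exactly as you claim.

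The divergence from the paper is entirely in your second step. The paper never establishes, and never needs, a pointwise approximation $\pi(x)=(1+\op(1))/(mn)$ for the full DBM; all that is proved about $\pi$ in the critical regime is the aggregate statement $\pi(V_j)=\tfrac1m+\op(1)$. Instead of comparing $\mu$ to $\pi_{V_i}$ directly, the paper uses an algebraic decomposition to bound the \emph{average} $\tfrac1m\sum_j\|\tfrac1n\sum_{y\in V_j}P^{\h}(y,\cdot)-\pi_{V_j}\|_{\rm TV}$ by $\|\nu-\pi\|_{\rm TV}+\Phi(\eps)$, where $\nu=\tfrac{1}{mn}\sum_{y\in V}P^{\h}(y,\cdot)$. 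The point is that $\|\nu-\pi\|_{\rm TV}=\op(1)$ has already been obtained in Eq.~\eqref{eq:nu-pi} as a byproduct of the nice-path upper bound (Proposition~\ref{prop:drop-positive-part}), and $\Phi(\eps)$ is a pure leakage term controlled by Proposition~\ref{prop:quenched-law}. So the paper recycles the entropic machinery rather than proving anything new about $\pi$.

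Your route, if completed, would give the sharper constant $\eps/C$ in place of $4m\eps/C$, but it hinges on carrying the Cooper--Frieze argument over to the DBM. That extension is plausible---in-degrees are still concentrated and the graph is homogeneous---but it is genuinely extra work, not a citation, and nothing in the paper supplies it for you. If you want to stay within the results already available, the paper's decomposition is the cheaper path; if you are willing to prove the pointwise estimate on $\pi$, your argument is more direct and more informative.
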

\begin{proof}[Proof of Lemma \ref{lemma:distance-eps}]
		Using the inequality $|a+b|\ge |a|-|b|$, for $a,b \in \R$, we have
	\begin{equation}  \label{eq:distance-eps}
		\begin{split}
			\Bigg\| \frac{1}{mn}&\sum_{y \in V}P^{\h}(y,\cdot)-\frac{1}{m}\sum_{i=1}^m\pi_{V_i} \Bigg\|_{\rm TV}\\
			&= \frac12\sum_{j=1}^m\sum_{z \in V_j}\frac1m \left|\frac{1}{n}\sum_{y \in V_j}P^{\h}(y,z)-\sum_{i=1}^m{\pi_{V_i}(z)}+\frac{1}{n}\sum_{y \in V_j^\complement}P^{\h}(y,z)\right|\\
			& \ge  \frac12\sum_{j=1}^m\sum_{z \in V_j}\frac1m \left[\left|\frac{1}{n}\sum_{y \in V_j}P^{\h}(y,z)-\pi_{V_j}(z)\right| -\frac{1}{n}\sum_{y \in V_j^\complement}P^{\h}(y,z)\right]\\
			&= \frac12\sum_{j=1}^m\frac1m \left[\sum_{z \in V_j}\left|\frac{1}{n}\sum_{y \in V_j}P^{\h}(y,z)-{\pi_{V_j}(z)}\right| -\frac{1}{n}\sum_{y \in V_j^\complement}P^{\h}(y,V_j)\right].
		\end{split}
	\end{equation}
	On the other hand, for every $j \le m$ it holds
	\begin{equation} \label{eq:distance-j-eps}
		\begin{split}
			\tv { \frac1n \sum_{y \in V_j} P^{\h}(y,\cdot )-{\pi}_{V_j} }\!\!\!\!
			&=\frac12\sum_{z \in V} \left| \frac1n \sum_{y \in V_j}P^{\h}(y,z)-{\pi}_{V_j}(z)  \right|\\
		&=\frac12 \left[\sum_{z \in V_j} \left| \frac1n\sum_{y \in V_j}P^{\h}(y,z)-{\pi}_{V_j}(z)  \right|+ \frac1n \sum_{y \in V_j} P^{\h}(y, V_j^\complement )\right].
		\end{split}
	\end{equation}
	Putting Eqs.~\eqref{eq:distance-eps} and \eqref{eq:distance-j-eps} together, it follows
		\begin{equation} \label{eq:bound-eps}
		\begin{split}
			\sum_{j=1}^m\frac1m \tv {  \frac1n\sum_{y \in V_i} P^{\h}(y,\cdot )-{\pi}_{V_i} } \le \tv { \frac{1}{mn}\sum_{y \in V}P^{\h}(y,\cdot)-\frac{1}{m}\sum_{i=1}^m\pi_{V_i} }  + \Phi(\eps),
		\end{split}
	\end{equation}
	where
	\begin{equation}
		\begin{split}
			\Phi(\eps) &\coloneqq	\frac1{2m} \sum_{j=1}^m \frac{1}{n}\Big(\sum_{y \in V_j^\complement}P^{\h}(y,V_j)+\sum_{y \in V_j}P^{\h}(y,V_j^\complement)\Big)
			 = 	\frac1{m} \sum_{j=1}^m \frac{1}{n}\sum_{y \in V_j}P^{\h}(y,V_j^\complement).
		\end{split}
	\end{equation}
	
	Invoking Proposition \ref{prop:quenched-law}, we get $$\left|\Phi(\eps)-\frac1m\sum_{j=1}^m\semi^\h(j, [m]\setminus \{j\})\right|=\op(1).$$ Using that, for $\tent=\alpha^{-1}/C$, and for each $j \le m$,
	\begin{equation} \label{eq:Q}
		\begin{split}
			 \semi^\h\left(j,[m]\setminus\{j\}\right)
             &=\frac{m-1}{m}\left(1-\left(1-\tfrac m{m-1}\alpha\right)^\h\right)\\
             &= \frac{m-1}{m}\left(1- {\rm e}^{-\frac{m}{m-1}\frac{2\eps}{C}}(1+o(1))\right) \le \frac{2\eps}{C}(1+o(1)),
		\end{split}
	\end{equation}
    As a consequence, \whp $\Phi(\eps) \le \frac{3\eps}{C}$.
	Recalling that $\pi=\frac{1}{m}\sum_{j=1}^m\pi_{V_j}(1+\op(1))$, thanks to Lemma \ref{lemma:ciritcal-1/m}, from Eq.~\eqref{eq:bound-eps} we conclude that \whp it holds 
	\begin{equation}
		\begin{split} 
			\frac1m\sum_{j=1}^m \tv {  \sum_{y \in V_j}\frac1n P^{\h}(y,\cdot )-{\pi}_{V_j} } \le \tv { \frac{1}{mn}\sum_{y \in V}P^{\h}(y,\cdot)-\pi} +\frac{3\eps}{C},
		\end{split}
	\end{equation}
	which is at most $\frac{3\eps}{C}+\op(1)$, thanks to Eq.~\eqref{eq:nu-pi}. Then, w.h.p., each term of the sum is bounded by $\frac{4m\eps}{C}$.
\end{proof}

\begin{proof}[Proof of Lemma \ref{lemma:desired}] 
We consider a time $t=\beta\tent$,
where $\eps>0$ is chosen such that $\beta>1+\eps$. For $x \in V$,
it holds
\begin{equation}
	\begin{split}
		\tv { P^{t}(x,\cdot)-\pi} 
		& \le \tv { P^{t}(x,\cdot)-\sum_{j=1}^m \semi^{t}(c(x),j) \pi_{V_j}}   \!\!\!\! +
		\tv { \sum_{j=1}^m \semi^{t}(c(x),j) {\pi_{V_j}}-\pi}.
	\end{split}
\end{equation}
Being $\pi= \sum_{j=1}^m\pi(V_j) {\pi_{V_j}}$, taking the maximum over $x\in V$, the second summand provides
\begin{equation} \label{eq:spirit}
	\begin{split}
		\max_{x \in V}&\tv { \sum_{j=1}^m \semi^{t}(c(x),j){\pi_{V_j}} -\sum_{j=1}^m\pi(V_j) {\pi_{V_j}} }
		= \max_{x \in V}\frac12 \sum_{j=1}^m \left|\semi ^{t}(c(x),j) -\pi(V_j)\right|\\
		& \le  \max_{x \in V}\frac12 \sum_{j=1}^m \left|\semi^{t}(c(x),j) -\frac 1m\right|+\op(1)
		=\frac{m-1}m \left(1-\frac{m}{m-1}\alpha\right)^{t}+\op(1),
	\end{split}
\end{equation}
where we have employed Lemma \ref{lemma:ciritcal-1/m} and the triangle inequality in the second line. This will provide the leading term. 
We now consider the first summand.
By Proposition \ref{prop:quenched-law}, it holds
\begin{equation}
	\max_{x \in V}\max_{j \le m}\left|P^{(\beta-1-\eps)\tent}(x,V_j) - \semi ^{(\beta-1-\eps)\tent} (c(x),j)\right|=\op(1).
\end{equation}
Then
\begin{equation} \label{eq:bounds}
	\begin{split}
		&\max_{x \in V}\left\| P^{\beta \tent}(x,\cdot)- \sum_{j=1}^m  \semi^{\beta \tent}(c(x),j) \pi_{V_j} \right\|_{TV} \\
		& \le \max_{x \in V}\tv{ \sum_{i=1}^m \sum_{z \in V_i} P^{(\beta-1-\eps)\tent}(x,z) \left[P^{(1+\eps)\tent}(z,\cdot)- \sum_{j=1}^m \semi^{(1+\eps)\tent}(i,j)\pi_{V_j} \right]}+\op(1)\\
		& \le \max_{i \le m} \max_{z \in V_i} \tv{ P^{(1+\eps)\tent}(z,\cdot)- \sum_{j=1}^m \semi^{(1+\eps)\tent}(i,j)\pi_{V_j}}+\op(1) \\
		& \le \max_{i \le m}  \tv{ \sum_{k=1}^m\frac{\semi^{\s+1}(i,k)}{n}\sum_{y \in V_k}P^{\h}(y,\cdot)- \sum_{j=1}^m \semi^{(1+\eps)\tent}(i,j)\pi_{V_j}}+\op(1) 
        \end{split}
\end{equation}
where the third inequality follows from Eq.~\eqref{eq:bound} (recall that, according to Definition \ref{def:nice-paths}, $(1+\eps)\tent=\s+\h+1$) Using the semigroup property of $\semi(\cdot,\cdot)$,
we can bound the last expression in Eq.~\eqref{eq:bounds} by
\begin{equation} \label{eq:}
	\begin{split}
		& \max_{i \le m}  \sum_{k=1}^m \semi^{\s+1}(i,k) \tv{ \frac1n\sum_{y \in V_k}P^{\h}(y,\cdot)- \sum_{j=1}^m \semi^{\h}(k,j)\pi_{V_j}}+\op(1) \\
		& \le \max_{k \le m}\left(\tv{ \frac1n\sum_{y\in V_k} P^{\h}(y,\cdot)-\pi_{V_k}}+\tv{\pi_{V_k}-\sum_{j=1}^m \semi^{\h}(k,j)\pi_{V_j}} \right)+\op(1),
	\end{split}
\end{equation}
Now \whp we can bound the first summand in the last display with $\frac{4m\eps}{C}$ by Lemma \ref{lemma:distance-eps}. The second summand has the same magnitude: reasoning in the spirit of Eqs.~\eqref{eq:spirit} and \eqref{eq:Q}, it holds
\begin{equation}
	\begin{split}
		\max_{k \le m}\tv { {\pi_{V_k}} - \sum_{j=1}^m \semi^{\h}(k,j){\pi_{V_j}} }
		&=\max_{k\le m} \frac12 \left|1-\semi ^{\h}(k,k)\right|+\frac12 \sum_{j\neq k} \semi^{\h}(k,j) \\
		& = \max_{k\le m} \frac{m-1}{m}\left(1-\left(1-\tfrac m{m-1}\alpha\right)^\h\right)
		\le \frac{2\eps}{C}(1+o(1)). \qedhere
	\end{split}
\end{equation}
\end{proof}

\end{document}